\documentclass[12pt]{article}
\usepackage{amsmath}
\usepackage{epsfig}                     
\usepackage{amssymb}
\usepackage{fullpage}

\newtheorem{thm}{Theorem}

\newtheorem{defn}{Definition}

\newtheorem{claim}{Claim}

\newtheorem{lemma}[thm]{Lemma}

\newtheorem{construction}[thm]{Construction}

\newtheorem{prop}[thm]{Proposition}
\newtheorem{example}{Example}

\newenvironment{proof} { \emph{Proof.} } { {\rule{2mm}{2mm}}\\ }

\newcommand{\bea}{\begin{eqnarray*}}
\newcommand{\eea}{\end{eqnarray*}}

\newcommand{\R}{\mathbb{R}}
\newcommand{\Z}{\mathbb{Z}}
\newcommand{\Q}{\mathbb{Q}}


\newcommand{\e}{\epsilon}

\newcommand{\bm}{\begin{pmatrix}}
\newcommand{\fm}{\end{pmatrix}}

\title{Envelopes of certain solvable groups}
\author{Tullia Dymarz}

\begin{document}
\maketitle
\begin{abstract} 
A discrete subgroup $\Gamma$ of a locally compact group $H$ is called a \emph{uniform lattice} if  the quotient $H/\Gamma$ is compact. Such an $H$ is called an \emph{envelope} of $\Gamma$. 
In this paper we study the problem of classifying envelopes of various solvable groups including the solvable Baumslag-Solitar groups,  lamplighter groups and certain abelian-by-cyclic groups.
Our techniques are geometric and quasi-isometric in nature.
In particular we show that for every $\Gamma$ we consider there is a finite family of preferred \emph{model spaces} $X$ such that, up to compact groups, $H$ is a cocompact subgroup of $Isom(X)$.
We also answer problem 10.4 in \cite{FM3} for a large class of abelian-by-cyclic groups. 
 \end{abstract}

\section{Introduction}

The problem of classifying envelopes of finitely generated groups was initiated by Furstenberg in \cite{Furs} where he coined the term \emph{envelope} and  proposed classifing which Lie group envelopes can occur. Mostow, Margulis, and  Prasad completed this program in \cite{Mostow,Margulis1,Margulis2,Prasad}. There is also earlier work of Malcev on nilpotent Lie groups \cite{Mal}. 
In \cite{F}, Furman revisited the case of lattices in semisimple Lie groups, but this time for arbitrary second countable envelopes. Recently Furman-Bader-Sauer have announced classification of a wider class of envelopes of groups extending Furman's previous results. 
Also in the same spirit, in \cite{MSW2} Mosher-Sageev-Whyte classified all envelopes of virtually free groups.
In this paper we prove the first results of this kind for classes of solvable groups.

\begin{thm}\label{solbsdlthm} Let $H$ be an envelope of a finitely generated group $\Gamma$.
\begin{enumerate}
\item  If $\Gamma$ is a lattice in the three dimensional solvable Lie group $Sol$ then, up to compact kernel, $H$ embeds as a cocompact subgroup of $Isom(Sol)$. 
\item If $\Gamma=\Gamma_m$ is  
the solvable Baumslag-Solitar group $$BS(1,m)=\left< a,b \mid aba^{-1}=b^m\right>$$ then, up to compact kernel, $H$ embeds as a cocompact subgroup of the isometry group $Isom(X_p)$ where $X_p$ is the standard model space for $\Gamma_p$.
\item If $\Gamma=\Gamma_m$ is a lamplighter group $F \wr \Z$ (with $|F|=m$) then, up to compact kernel, $H$ embeds as a cocompact subgroup of the isometry group $Isom(X_p)$ where $X_p$ is the Diestel-Leader graph $DL(p,p)$ a standard model space for $\Gamma_p$. 
\end{enumerate}
\end{thm}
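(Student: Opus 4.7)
The plan is to treat all three cases in parallel by reducing to the quasi-isometric rigidity theorem for the appropriate model space $X$, and then upgrading the induced quasi-action of $H$ on $X$ to an isometric, cocompact action modulo a compact kernel.

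First, since $\Gamma$ is a finitely generated uniform lattice in $H$, the locally compact group $H$ is compactly generated. Endowing $H$ with a word metric coming from a compact symmetric generating set makes the inclusion $\Gamma \hookrightarrow H$ a quasi-isometry. Composing with the standard quasi-isometry $\Gamma \to X$, where $X$ is $Sol$, the model space for $BS(1,p)$, or the Diestel--Leader graph $DL(p,p)$ respectively, gives a quasi-isometry $q : H \to X$. The left multiplication action of $H$ on itself is proper, cobounded, and by isometries; conjugating through $q$ produces a uniform quasi-action of $H$ on $X$, i.e. a homomorphism $\phi : H \to \mathrm{QI}(X)$ whose underlying quasi-isometries have uniform constants and are cobounded on $X$.

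Next I would feed this quasi-action into the quasi-isometric rigidity theorem for $X$. For $Sol$ this is Eskin--Fisher--Whyte, which says every self quasi-isometry of $Sol$ is within bounded distance of an isometry; for $BS(1,m)$ this is Farb--Mosher, identifying $\mathrm{QI}(X_p)$ via its induced bi-Lipschitz action on a $p$-adic horocyclic boundary; and for $DL(p,p)$ it is the lamplighter rigidity of Eskin--Fisher--Whyte, which controls $\mathrm{QI}(X_p)$ via its action on a pair of $p$-adic boundaries. In each case the conclusion is that the uniform quasi-action $\phi$ can be straightened: there is a continuous homomorphism $\tilde\phi : H \to \mathrm{Isom}(X)$ with $\tilde\phi(h)$ at uniformly bounded distance from $\phi(h)$ for every $h \in H$.

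Finally, the kernel of $\tilde\phi$ consists of elements of $H$ whose left translation action is within uniformly bounded distance of the identity on $X$; properness of the $H$-action on itself then forces this kernel to be compact, and coboundedness of $\phi$ transfers to coboundedness of $\tilde\phi(H)$ in $\mathrm{Isom}(X)$, giving the desired embedding up to compact kernel. The main obstacle lies in the straightening step: for $Sol$ the bounded-distance rigidity is strong enough to yield it directly, but for $BS(1,m)$ and the lamplighter cases one must show that the bi-Lipschitz self-maps of the $p$-adic boundary coming from a uniform $H$-action are in fact similarities, and then pass this back to an isometry of $X$. This is also where the parameter $p$ (rather than the original $m$) is forced on us, since the canonical model space $X_p$ is defined only up to the equivalence under which $BS(1,m)$ and $BS(1,n)$, or the lamplighters $F\wr\Z$ of different orders, share a common model space precisely when their defining integers have a common power.
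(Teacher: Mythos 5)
Your overall architecture (uniform quasi-action of $H$ on the model space $X$, then QI-rigidity, then straightening to isometries with compact kernel) matches the paper's strategy, and your use of the word metric on the compactly generated group $H$ in place of Furman's explicit construction is a harmless variant. However, there is a concrete error at the heart of your straightening step. Eskin--Fisher--Whyte do \emph{not} prove that every self quasi-isometry of $Sol$ is within bounded distance of an isometry; they prove that every such quasi-isometry is height-respecting, which identifies $QI(Sol)$ with (a finite extension of) $Bilip(\R)\times Bilip(\R)$ acting on the two parabolic boundaries. This group is vastly larger than $Isom(Sol)$, whose boundary action is by similarities. So the $Sol$ case is not ``direct'': exactly as in the $BS(1,m)$ and lamplighter cases, you are left with a uniform subgroup $U$ of a product of bilipschitz groups and must still show it lies in (or is conjugate into) the corresponding product of similarity groups.

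That missing step is the actual content of the theorem, and your proposal does not supply it. Two ingredients are needed. First, a Tukia-type conjugation theorem: a uniform, cocompact-on-pairs subgroup of $Bilip(\R)$ is conjugate into $Sim(\R)$ (Farb--Mosher), and a uniform subgroup of $Bilip(\Q_m)$ is conjugate into $Sim(\Q_p)$ for some $p$ commensurable with $m$ (Mosher--Sageev--Whyte) --- this is where the parameter $p$ genuinely enters, as you correctly guessed. Moreover, for the $\R$ factors one must argue that no conjugation is actually needed (the paper uses the fact that $\Gamma$ induces a dense group of translations on these boundaries), since otherwise the conjugated group need not act on the original model space compatibly with the other factor. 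Second, a uniform subgroup of $Sim(\partial_1 X)\times Sim(\partial_2 X)$ is not automatically in $Isom(X)$: one must show the two similarity constants are inverse to each other, which the paper does by iterating a group element and invoking uniformity of the quasi-isometry constants. Without these two arguments your proof does not close.
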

The description of the model spaces $Sol$ and  $X_m$ can be found in Section \ref{groupsandgeom}.
These three types of groups are treated together to emphasize their similar structure. There are some notable differences however. For lattices in $Sol$ there is only one canonical model space into which $H$ embeds, namely $Sol$ itself, while 
for $BS(1,m)$ and $F \wr \Z$, the model space $X_p$ varies depending on $m$ and $|F|$ and $H$ itself. 
In Section \ref{finmanysec} we show 
that for each group there are only finitely many possibilities for $p$. Note also that a lamplighter group is only solvable as long as the defining finite group $F$ is itself solvable but our proof works for all lamplighters.

\begin{thm}\label{abcthm} Let $\Gamma=\Gamma_M$ be a finitely presented abelian-by-cyclic group given by
$$\Gamma_M=\left< a, b_1, \ldots, b_n \mid ab_ia^{-1}= b_1^{m_{1i}}\cdots b_n^{m_{ni}},\ b_ib_j=b_jb_i\right>$$ 
where $M=(m_{ij})$ is an integral matrix.
Let $\bar{M}$ be the absolute Jordan form of $M$ and let $X_{\bar{M}}$ be a standard model space for $\Gamma_M$. If $M$ has either
\begin{enumerate}
\item all eigenvalues off of the unit circle and $\det{M}= 1$ or
\item all eigenvalues of norm greater than one
\end{enumerate}
then any envelope of $\Gamma_M$ embeds cocompactly in a locally compact group that is isomorphic, up to compact groups, to $Isom(X_{\bar{M}^k})$ for some $k \in \Q$.
\end{thm}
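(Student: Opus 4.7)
The plan is to follow the three-step template from the proof of Theorem~\ref{solbsdlthm}. Let $H$ be an envelope of $\Gamma_M$. Since $\Gamma_M$ is a uniform lattice in $H$, the inclusion $\Gamma_M\hookrightarrow H$ is a quasi-isometry, so $H$ is quasi-isometric to $X_{\bar{M}}$; left translation of $H$ on itself then transports along this quasi-isometry to a proper, cocompact, uniform quasi-action of $H$ on $X_{\bar{M}}$. The goal is to straighten this uniform quasi-action into an honest isometric action of $H$ on a related model space of the form $X_{\bar{M}^k}$ for some $k\in\Q$.

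The central step is to invoke the appropriate quasi-isometric rigidity theorem for $X_{\bar{M}}$. Under condition~(2) the group $\Gamma_M$ is of the ``expanding'' / treebolic type studied by Farb-Mosher and Eskin-Fisher-Whyte (with Peng covering nondiagonalizable matrices), and the available rigidity identifies every self-quasi-isometry of $X_{\bar{M}}$ with a bounded-distance isometry of some rescaled model $X_{\bar{M}^k}$ for rational $k$. Under condition~(1) the space $X_{\bar{M}}$ is a Heintze-type nonunimodular solvable Lie group with mixed expanding and contracting eigenvalues, and the analogous rigidity is available through work on quasi-isometries of such Lie groups (Eskin-Fisher-Whyte, Xie, Carrasco Piaggio). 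In both regimes the crucial refinement needed is not just pointwise rigidity but that a \emph{uniform} family of self-quasi-isometries, such as the one coming from $H$, can be \emph{simultaneously} conjugated to an isometric action on a single $X_{\bar{M}^k}$.

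Once this straightening is in place, the remainder is formal. The resulting map $H\to Isom(X_{\bar{M}^k})$ is continuous by local compactness of $H$ together with the uniform bound on the quasi-isometry constants; properness and cocompactness descend from the corresponding properties of the original left-translation action. An element in the kernel displaces every point of $X_{\bar{M}^k}$ by a uniformly bounded amount, which under the quasi-isometry $H\to X_{\bar{M}}$ places it in a fixed compact neighborhood of the identity in $H$; the kernel is therefore compact. This gives the desired embedding of $H$, up to a compact normal subgroup, as a cocompact subgroup of $Isom(X_{\bar{M}^k})$.

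The real difficulty lies in the rigidity step, especially under condition~(1). Condition~(2) should proceed in close parallel with the $BS(1,m)$ argument of Theorem~\ref{solbsdlthm}, once the horocyclic-product model is upgraded to higher rank and Peng's analysis of the quasi-isometry group is imported. Condition~(1) demands more: one has to control quasi-isometries of Heintze-type Lie groups whose Lyapunov spectrum can be irrational, verify that the rescaling exponent $k$ is forced to be rational, and check that the conjugacy produced is compatible with the full $H$-action rather than with individual self-quasi-isometries. Finiteness of the set of $k$ that actually arise from envelopes of a fixed $\Gamma_M$ is then handled separately in Section~\ref{finmanysec}, in parallel with the lamplighter and $BS(1,m)$ cases.
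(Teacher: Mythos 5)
There is a genuine gap, and it sits exactly at the step you label ``central.'' The quasi-isometric rigidity theorems you invoke do not say that every self-quasi-isometry of $X_{\bar{M}}$ is at bounded distance from an isometry of some rescaled model $X_{\bar{M}^k}$. What they say is that all quasi-isometries are height-respecting, so that $QI(X_{\bar{M}})$ is identified with a product of \emph{bilipschitz} groups of the two parabolic boundaries, e.g.\ $Bilip_{\bar{M}_1}(\R^{n_1})\times Bilip_{\bar{M}_2}(\R^{n_2}\times\Q_d)$. That group is vastly larger than $\bigcup_k Isom(X_{\bar{M}^k})$, so the ``straightening'' you need is not available as a citation; it is the actual content of the proof. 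The paper's route is: embed $H$ via Furman's construction as a uniform subgroup $U$ of the boundary bilipschitz groups; apply Tukia-type conjugation theorems (Theorems \ref{mytukia2} and \ref{anothertukia}), which only land in $ASim$ --- similarities composed with ``almost translations'' carrying H\"older shear terms $B_i$; use the dense group of translations induced by $\Gamma_M$ (Lemma \ref{denselem}) to arrange that no conjugation is needed on the $\R^n$ factors; then prove the shears are constant (Claim \ref{straight1}, an averaging argument exploiting H\"older continuity and the density of translations); and finally package the residual rotation/dilation data into a homomorphism $\psi:U\to O(n)\times S^1$ whose kernel lies in $Isom(X_{\bar{M}^k})$ by Lemma \ref{unifiterate} and whose image is shown compact using amenability of $U$ and coarse convergence. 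Your proposal contains none of these steps and replaces them with an appeal to results that do not exist in the form required.

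Two smaller issues. First, your geometric description of condition (1) is off: $\det M=1$ makes $X_{\bar{M}}$ a unimodular Sol-like group $G_{\bar{M}_1}\times_h G_{\bar{M}_2}$ (and one can take $k=1$ there), not a nonunimodular Heintze-type group --- the latter corresponds to condition (2), where the model is $G_{\bar{M}}\times_h T_{d+1}$. Second, you claim $H$ modulo a compact \emph{normal} subgroup embeds in $Isom(X_{\bar{M}^k})$; the theorem's conclusion is weaker (``up to compact groups'' means a compact kernel \emph{and} a compact quotient, the latter being $\psi(U)$), and your argument supplies no mechanism for either version.
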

The geometry of $X_{\bar{M}}$ and $X_{\bar{M}^k}$ are described in Section \ref{groupsandgeom}.
Note that if $M \in SL_2(\Z)$ is a hyperbolic matrix then $\Gamma_M$ is a lattice in $Sol$ and if $M=[m]$ is a one by one matrix
then $\Gamma_M=BS(1,m)$ so Theorem \ref{abcthm} covers the first two cases of Theorem \ref{solbsdlthm} but there is a subtle difference in the conclusions. Theorem \ref{solbsdlthm} shows that there is a compact normal subgroup $K$ such that  $H/K \subset Isom(X)$. For Theorem $2$
 by ``up to compact groups'' we mean that 
 $\Gamma$  is contained in $H' \subset Isom(X_{\bar{M}^k})$ 
where $$1 \to H' \to H/K\to K' \to 1$$ with $K,K'$ compact. 
The first case ($\det{M}=1$ and all eigenvalues off the unit circle) should be treated as a generalization of lattices in $Sol$. Indeed in this case we can always take $k=1$ and the model spaces $X_{\bar{M}}$ are all solvable Lie groups. 
The second case should be thought of as a generalization of $BS(1,m)$ and in this case in Section \ref{finmanysec} we show that there are again finitely many choices for $k$ depending on the factoring of $\det{M}$.
These groups and spaces $X_{\bar{M}}$ were first studied in \cite{FM3}.

There is another subclass of abelian-by-cyclic groups where we can prove a partial result. This is the class where $M$ has eigenvalues both of norm greater and less than one and $\det{M}> 1$.
  We state the theorem here but refer the reader to Section \ref{rigabcsec} for definitions. 

\begin{thm}\label{weakthm}
Let $\Gamma_M$ be a finitely presented abelian-by-cyclic group with 
$d=\det{M}>1$ and where $M$ has some eigenvalues of norm greater than one and some of norm less that one (and none of norm one).  Then if $\Gamma_M$ is a cocompact lattice in a locally compact group $H$ then 
$H$ is a uniform subgroup of 
$$Sim_{\bar{M}_1}(\R^{n_1}) \times ASim_{\bar{M}_2}(\R^{n_2} \times \Q_{d^k})$$
where $\bar{M}_1$ is the matrix containing all eigenvalues of $M$ with norm greater than one and $\bar{M}_2$ contains the inverses of all eigenvalues of $M$ of norm less than one.
\end{thm}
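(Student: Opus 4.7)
The plan is to adapt the geometric scheme used for Theorems \ref{solbsdlthm} and \ref{abcthm}, pushing through a quasi-isometric rigidity argument for the model space $X_{\bar{M}^k}$ of $\Gamma_M$. In the present mixed-eigenvalue regime with $d>1$, this model space is a horocyclic product whose upper boundary is $\R^{n_1}$ (equipped with the $\bar{M}_1$-parabolic visual metric) and whose lower boundary is the solenoid $\R^{n_2}\times\Q_{d^k}$ (equipped with the $\bar{M}_2$-parabolic visual metric). The choice of $k$ will come, as in the previous section, from the arithmetic of $\det M$.

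First I would equip $H$ with a left-invariant proper metric for which $\Gamma_M$ acts cocompactly by isometries; this is possible because $\Gamma_M$ is a uniform lattice. Then $H$ is quasi-isometric to $\Gamma_M$ and hence to the model space $X=X_{\bar{M}^k}$, and the left-regular action of $H$ on itself transports to a uniform family of quasi-isometries of $X$.

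Second, I would apply the quasi-isometric classification for horocyclic products of this type: every self-quasi-isometry of $X$ preserves (or swaps) the upper and lower boundary foliations and induces a bilipschitz homeomorphism on each boundary with respect to its parabolic visual metric. Under our hypothesis the two boundaries are of distinct topological type (one is a connected real space, the other contains the totally disconnected factor $\Q_{d^k}$), so no quasi-isometry can swap them and one obtains a genuine homomorphism
$$\rho\colon H \longrightarrow \mathrm{Bilip}(\R^{n_1},\bar{M}_1) \times \mathrm{Bilip}(\R^{n_2}\times\Q_{d^k},\bar{M}_2).$$
Properness of the $H$-action on $X$ should force $\ker\rho$ to be compact, while cocompactness of the action should force $\rho(H)$ to be a uniform subgroup of the target. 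To pass from the bilipschitz groups to $Sim_{\bar{M}_1}(\R^{n_1})$ and $ASim_{\bar{M}_2}(\R^{n_2}\times\Q_{d^k})$, I would invoke bilipschitz rigidity on each factor: self-bilipschitz maps of $\R^{n_1}$ in the $\bar{M}_1$-parabolic metric are exactly $\bar{M}_1$-similarities, and self-bilipschitz maps of the solenoid in the $\bar{M}_2$-parabolic metric are exactly $\bar{M}_2$-almost-similarities.

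The main obstacle I anticipate is the bilipschitz rigidity of the mixed solenoid $\R^{n_2}\times \Q_{d^k}$ with its non-homogeneous $\bar{M}_2$-parabolic metric; when $\bar{M}_2$ is not a scalar one needs a Dymarz--Xie type theorem for such anisotropic $p$-adic-times-real metrics, and the looseness at the $\Q_{d^k}$ scale is exactly what forces ``almost'' similarities rather than honest similarities. A secondary difficulty is arranging that the classification of self-quasi-isometries of $X$ applies uniformly to the whole family $\{L_h\}_{h\in H}$ in a way that produces a continuous homomorphism $\rho$ with compact kernel and closed cocompact image, rather than merely a map of sets.
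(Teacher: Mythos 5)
Your overall scaffolding (a Furman-style embedding of $H$ into a uniform group of quasi-isometries of the model space, then height-respecting QI rigidity to land in a product of bilipschitz groups of the two parabolic boundaries) matches the paper's route, and the ``secondary difficulty'' you flag about continuity and compact kernel is exactly what the paper resolves with Whyte's coarse-convergence topology and QI-tameness. But your third step contains a genuine error that breaks the argument. You assert that ``self-bilipschitz maps of $\R^{n_1}$ in the $\bar{M}_1$-parabolic metric are exactly $\bar{M}_1$-similarities,'' and similarly for the $\R^{n_2}\times\Q_{d^k}$ factor. This is false: already for $n_1=1$ with $\bar{M}_1$ scalar the parabolic metric is the standard metric on $\R$, and $Bilip(\R)$ is vastly larger than $Sim(\R)$; in general $Bilip_{\bar{M}}(\R^n)$ contains all flag-preserving maps that are bilipschitz along each eigenspace and merely H\"older transversally. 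What is true, and what the paper actually uses, is a Tukia-type conjugation theorem: a \emph{uniform group} of bilipschitz maps acting cocompactly on pairs of distinct points can be \emph{conjugated by a bilipschitz map} into the (almost) similarity group (Theorems \ref{mytukia2} and \ref{anothertukia}). So you must (i) verify the cocompactness-on-pairs/radial-point hypothesis for the image of $H$ --- the paper gets this from Lemma \ref{denselem}, i.e.\ from $\Gamma_M$ inducing dense sets of translations on both boundaries --- and (ii) control the conjugation itself.

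Point (ii) is not cosmetic. A conjugation applied independently to the two boundary factors need not be compatible with the horocyclic-product structure, and it is precisely the conjugation in the $\Q_d$ direction (Theorem \ref{MSWconj}) that can change the base from $d$ to $d^k$: this is the only mechanism producing the exponent $k$ in the statement, whereas in your write-up $\Q_{d^k}$ is present in the model space from the outset with no explanation of where $k$ comes from ($\Gamma_M$ itself acts on $X_{\bar{M}}$, with boundary factor $\Q_d$). The paper uses the dense translations and radial points to show that no conjugation is needed on the real factors (Propositions \ref{noconjM} and \ref{noconjMQn}), so only the tree direction is reconjugated and the group ends up acting on the new model space $X_{\bar{M}^k}$. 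Finally, to obtain a \emph{uniform} subgroup of the product one still needs the observation that the induced dilation exponents on the two boundaries satisfy $t_1+t_2=0$, proved by iterating group elements and invoking uniformity as in Lemma \ref{unifiterate}; this closing step is absent from your outline.
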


In fact this theorem applies to all $\Gamma_M$ with no eigenvalues on the unit circle with the understanding that 
if $\det M=1$ then there is no $\Q_d$ factor and $ASim$ is replaced with $Sim$. 
If all of the eigenvalues are greater than one then $ASim_{\bar{M}_2}(\R^{n_2} \times  \Q_d^k)$ is replaced with $Sim_{\bar{M}_2}(\R^{n_2})$. 
Theorem \ref{weakthm} is an intermediate technical step that is used to prove Theorem \ref{abcthm}.

\section{Outline}

The strategy for analyzing envelopes of the above mentioned groups follows the strategy applied by Furman in \cite{F} for analyzing cocompact envelopes of lattices in rank one symmetric spaces. 

First, if $\Gamma \subset H$ is a cocompact lattice then using a construction of Furman (see Construction \ref{Furmanconstruct} below) we get an embedding, up to compact kernel, of $H$ into the quasi-isometry group $QI(\Gamma)$ as a uniform subgroup. Since $\Gamma$ and any model space $X$ for $\Gamma$ are quasi-isometric Construction \ref{Furmanconstruct} also gives us a uniform embedding of $H$ into $QI(X)$. We use this embedding precisely because $Isom(X)$ also maps naturally to $QI(X)$ as a uniform subgroup. For all of our spaces $Isom(X)$ actually embeds into $QI(X)$ up to compact kernel. (This is not the case, for example, for $X=\R$).  
In order to show that the embedding of $H$ into $QI(X)$ has only compact kernel we rely on a topology on uniform subgroups of $QI(X)$ developed by Whyte in \cite{wh}. Since this paper has not appeared yet we reproduce parts of it in Section \ref{topsec}. 
Furman's construction is explained in Section \ref{embedsection}.

Next, to understand $Isom(X)$ and $QI(X)$ better
we describe how
all of the above groups have model spaces that can be constructed from certain $CAT(-1)$ spaces
using the so called \emph{horocyclic product}.
This process is described in Section \ref{groupsandgeom}. 
By viewing these model geometries as horocyclic products we are able to interpret 
$Isom(X)$ as a product of \emph{similarities} of the visual boundaries of these $CAT(-1)$ spaces.
Then by appealing to various quasi-isometric rigidity theorems we can interpret 
$QI(X)$ as 
a product of \emph{bilipschitz} maps of these boundaries. This viewpoint has already been used in certain cases like lattices in $Sol$, $BS(1,n)$, and $F \wr Z$ but has not been applied to its full extent to abelian-by-cyclic groups (notably for the ones that are not lattices in solvable Lie groups). 

Finally in Section \ref{conjsec} we study the analytic properties of bilipschitz maps on these boundaries. This analysis leads us in Section \ref{rigsec} to conclude that the envelope $H$ must actually lie in the subgroup identified with $Isom(X)\subset QI(X)$. In the case of lattices in $Sol$, $BS(1,n)$, $F \wr Z$ this analysis is straightforward since there are 
existing theorems that show that uniform subgroups of bilipschitz maps of their boundaries can be conjugated to groups acting by similarities.
For general abelian-by-cyclic groups the analysis is more technical since the existing theorems only give partial information. It is this analysis that makes the case of $\det{M}\neq 1$ with eigenvalues of norm greater or less than one intractable and leaves us only with the partial results of Theorem \ref{weakthm}.

\section{Preliminaries}\label{presec}

\subsection{Analysis}
\begin{defn}[Quasi-isometry]\label{qidef} We say that $f: X \to Y$ is a $(K,C)$ \emph{quasi-isometry} if 
$$-C + {K} d(x,y) \leq d(f(x),f(y))\leq K d(x,y)+C$$
and the $C$ neighborhood of $f(X)$ is all of $Y$. 
\end{defn}

\begin{defn} The quasi-isometry group $QI(X)$ is the group of equivalence classes of self quasi-isometries $f:X \to X$ where $f \sim g$ if $d_{sup}(f,g) < \infty$. We write $[f]\in QI(X)$ to denote the equivalence class of $f$. 
We say that a subgroup $U \subset QI(X)$ is \emph{uniform} if there exists fixed $(K,C)$ such that each equivalence class in $U$ has at least one representative that is a $(K,C)$ quasi-isometry. 
\end{defn}

\begin{defn}[Similarity]\label{simdef} We say a map $f: X \to Y$ is a \emph{similarity} with similarity constant $s$ if 
$$ d(f(x),f(y))=s\ d(x,y).$$
\end{defn}

\begin{defn}[Bilipschitz/Quasi-similarity]\label{qidef} We say that $f: X \to Y$ is a \emph{bilipschitz} if 
$$a\ d(x,y) \leq d(f(x),f(y))\leq b\ d(x,y).$$
If we can chose $a=1/K$ and $b=K$ then we say $f$ is a $K$-\emph{bilipschitz} map.
If we can chose $a=s/K$ and $b=sK$ then we say that $f$ is a ($K,s$)-\emph{quasi-similarity}. 
We say that a group of bilipschitz maps/quasi-similarities is \emph{uniform} if $K$ is uniform over all group elements. 
\end{defn}

\noindent{\bf Notation.} We write $Bilip(X)$ to denote the set of all bilipschitz (quasi-similarity) maps of $X$, and $Sim(X)$ to denote all similarities. 

\subsection{Geometry}\label{geomsec}
\begin{defn}[Model space]
We say that a proper geodesic metric space $X$ is a \emph{model space} for a finitely generated group $\Gamma$ if $\Gamma$ acts properly discontinuously and cocompactly on $X$ by isometries. In this case $\Gamma$ is a cocompact lattice in $Isom(X)$.
\end{defn}
Most of the spaces we study in this paper will be constructed from $CAT(-1)$ spaces so we recall here some basic properties of these spaces. For a $CAT(-1)$ space $X$ the usual visual boundary $\partial_\infty X$ can be identified with all geodesic rays starting at a fixed basepoint $x_0 \in X$. We will use instead the \emph{parabolic visual boundary} $\partial_\ell$ which is defined by choosing  $\xi_0 \in \partial_\infty X$ and setting $$\partial_\ell:=\partial_\infty X \setminus \{\xi_0\}.$$ We can treat $\partial_\ell$ as the set of bi-infinite geodesics with one endpoint equal to $\xi_0$.

\begin{defn}[Horosphere]  For a $CAT(-1)$ metric space $X$, fix $x\in X$ and geodesic $\ell$ that is parametrized by length. Define 
$$ h(x)=\lim_{t \to \infty} d(x, \ell(t)) - t.$$
This limit exists by the triangle inequality. The function $h$ is called a horofunction and the level sets of $h$ are called horospheres.
\end{defn}
We  think of $h$ as assigning  a \emph{height} coordinate to points $x\in X$  and so we will sometimes refer to $h:X \to \R$ as a \emph{height function}. We call a geodesic $\ell'(t)$ a \emph{vertical geodesic} if $h(\ell'(t))=t$. In a $CAT(-1)$ space $X$, each $\xi \in \partial_{\infty} X$ is either $\ell^+$ (the point on the boundary define by the ray defined by $\ell$) or there exists a unique vertical geodesic connecting $\xi$ to $\ell^+$. If we take $\xi_0=\ell^+$ then each vertical geodesic with respect to $h$ gives a point on the parabolic visual boundary.

\begin{example} The hyperbolic plane $\mathbb{H}^2$ in the upper half plane model with the coordinates $(x,y)$ has $t=\ln{y}$ as the height coordinate and each each vertical geodesic is given by $$\ell(t)=(x_0, e^t).$$  
\end{example}

\begin{example}\label{treeexample} If $T_{n+1}$ is a regular $n+1$ valent infinite tree and we assign an orientation to edges so that each vertex has $n$ incoming edges and one outgoing edge (pointing ``up")  then, after picking a base point, this orientation induces a height function equivalent to any horofunction given by any coherently oriented geodesic in the tree. (Coherently oriented geodesics are geodesics whose orientation never changes). 
\end{example}

\noindent{}For some of the analysis appearing later we need the following definition.
\begin{defn}[Radial point] Let $U$ be a uniform group of quasi-isometries of $X$.
 We say that $\xi \in \partial_\infty X$ is a \emph{radial point} for $U$ if there exists a sequence of quasi-isometries $g_n:X \to X$ with $[g_n] \in U$ 
 such that for any $x \in X$ and any geodesic ray $\ell$ defining $\xi$ there exist an $R>0$ such that $d_X(g_n(x),\ell)< R$ for all $n$ and  $g_n(x) \to \xi$.
\end{defn}
\section{Topology on $QI(\Gamma)$}\label{topsec}
Our standing assumption will be that $X$ is a path metric space of bounded geometry. (i.e. quasi-isometric to a bounded valence graph). 
The following definition is due to Whyte.

\begin{defn}[QI-tame space] We say that $X$ is a quasi-isometrically tame space if for any 
$(K,C)$  there exists a constant $R_{K,C}$ such that for any quasi-isometry $f:X \to X$ 
 $$ d_{sup}(f,Id) < \infty \Rightarrow d_{sup}(f,Id) < R_{K,C}.$$
\end{defn}

In \cite{wh} Whyte develops a topology of ``coarse convergence" for QI-tame spaces.
We include some of the these details in this paper for completeness.

\subsection{Topology}

\begin{defn}[Coarse convergence] A sequence of $(K,C)$ quasi-isometries $\{f_i\}$ is said to coarsely  converge to $f$ if there exists an $R$ such that 
$$\limsup_{i\to \infty}{d(f_i(x),f(x))} < R$$
in which case we say that $\{f_i\}$ R-coarsely converges to $f$.
\end{defn}
Note that if $X$ is discrete and $f_i$ coarsely converges to $f$ then some subsequence actually converges to a $(K,C)$ quasi-isometry $f'$ with $d_{sup}(f',f) < R$.
\begin{lemma}If $X$ is a QI-tame space then coarse convergence $f_i \to f$  descends to the quasi-isometry group and so we can write $[f_i] \to [f]$.
\end{lemma}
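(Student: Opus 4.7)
The plan is to reduce the lemma to a two-function quantitative consequence of QI-tameness: for any choice of QI constants $(K,C)$ and $(K',C')$ there should exist a uniform bound $M=M(K,C,K',C')$ such that any two quasi-isometries $f,g$ with those respective constants and with $d_{sup}(f,g)<\infty$ in fact satisfy $d_{sup}(f,g)<M$. Once such a uniform bound is available, the well-definedness of $[f_i]\to [f]$ on equivalence classes --- the content of the lemma --- follows by triangle-inequality bookkeeping.

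To prove this upgraded statement, I would first pick for each $(K,C)$-quasi-isometry $f$ a coarse inverse $\bar f$. In the bounded-geometry path-metric setting fixed at the start of the section, $\bar f$ can be chosen so that it is a $(K_1,C_1)$-quasi-isometry with $d_{sup}(f\bar f, Id), d_{sup}(\bar f f, Id) \leq D$, where $(K_1,C_1,D)$ depend only on $(K,C)$. Then $\phi := g \circ \bar f$ is a $(K'',C'')$-quasi-isometry with $(K'',C'')$ depending only on $(K,C,K',C')$, and $d_{sup}(\phi, Id) \leq d_{sup}(f,g) + D < \infty$. QI-tameness applied to $\phi$ gives $d_{sup}(\phi, Id) < R_{K'',C''}$; pre-composing with $f$ and absorbing the displacement of $\bar f f$ turns this into the desired uniform bound $d_{sup}(f,g) < M(K,C,K',C')$.

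With the upgrade in hand, I would finish as follows. Suppose $f_i \to f$ $R$-coarsely, with the $f_i$ uniformly $(K,C)$-quasi-isometries, and let $g_i \in [f_i]$, $g \in [f]$ be any alternative representatives, with the $g_i$ uniformly $(K',C')$-quasi-isometries. The upgrade gives $d_{sup}(f_i, g_i) < M_1$ uniformly in $i$ and $d_{sup}(f, g) < M_2$. Two applications of the triangle inequality then produce $d(g_i(x), g(x)) \leq M_1 + d(f_i(x), f(x)) + M_2$ for every $x$, whence $\limsup_i d(g_i(x), g(x)) \leq M_1 + M_2 + R$ and so $g_i \to g$ coarsely, as required.

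The only real obstacle is the first step: confirming that a coarse inverse with the advertised uniformly controlled QI constants and sup-displacement exists. Under the blanket bounded-geometry assumption stated at the start of Section 4, this is a standard exercise, so the heart of the proof really is QI-tameness plus the triangle inequality.
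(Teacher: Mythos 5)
Your proof is correct and follows essentially the same route as the paper: QI-tameness plus the triangle inequality. The only substantive difference is that you carefully derive the uniform two-function bound $d_{sup}(f,g)<M(K,C,K',C')$ for equivalent quasi-isometries from the identity-based definition of tameness via a coarse inverse, a step the paper's proof simply absorbs into the notation $R_{K,C}$.
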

\begin{proof}
If  $\{ f_i \}$ and $\{ f_i' \}$ are two sequence of $(K,C)$ quasi-isometries such that 
$f_i \sim f_i'$ then by the triangle inequality $\{f_i'\}$ coarsely converges to $f$.
$$ \limsup_{i \to \infty} d(f_i'(x),f(x)) \leq \limsup_{i \to \infty} d(f_i'(x),f_i(x))\quad\quad\quad\quad\quad\quad\quad\quad\quad\quad$$$$\quad\quad\quad\quad\quad\quad\quad\quad\quad\quad + \limsup_{i \to \infty} d(f_i(x),f(x))  \leq R_{K,C} + R$$
Likewise if $\{f_i\}$ coarsely converges to $f'$ then 
$$d(f(x),f'(x)) \leq \limsup_{i \to \infty} d(f(x),f_i(x)) + \limsup_{i \to \infty} d(f_i(x),f'(x)) \leq R + R'$$
(and so $d_{sup}(f,f') \leq R_{K,C}$ since $X$ is QI-tame).
\end{proof}\\
In this case we say that $[f_i]$ coarsely converges to $[f]$ and refer only to $(K,C)$ quasi-isometry representatives for some fixed $K$ and $C$.

\begin{lemma} $X$ is QI-tame iff there exists an $R$ such that if a sequence of $(K,C)$ quasi-isometries $f_i$ coarsely converges to $f$ then  $f_i$ $R$-coarsely converges to $f$.
\end{lemma}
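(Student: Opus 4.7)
The reverse direction ($\Leftarrow$) is immediate from the hypothesis applied to a degenerate sequence. Given any $(K,C)$ quasi-isometry $f$ with $d_{sup}(f,Id) < \infty$, consider the constant sequence $f_i := f$. This trivially coarsely converges to $Id$ (itself a $(K,C)$ quasi-isometry), so by the assumed uniform bound it $R$-coarsely converges, forcing
$$d_{sup}(f,Id) = \limsup_{i\to\infty} d(f_i(x),Id(x)) \leq R.$$
Setting $R_{K,C} := R$ verifies QI-tameness.

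For the forward direction ($\Rightarrow$), assume $X$ is QI-tame and let $\{f_i\}$ be a sequence of $(K,C)$ quasi-isometries coarsely converging to $f$ with some bound $R_0$. Using the note following the definition of coarse convergence, first replace $f$ by a $(K,C)$ quasi-isometry representative $f'$ lying within $R_0$ of $f$; this is harmless since only the coarse class of the limit matters. Let $\bar{f'}$ be a coarse inverse to $f'$ whose constants depend only on $(K,C)$. Then $g_i := f_i \circ \bar{f'}$ is a $(K',C')$ quasi-isometry with $(K',C')$ determined by $(K,C)$ alone, and the coarse-retraction property of $\bar{f'}$ combined with $d_{sup}(f_i,f') < 2R_0$ for $i \geq N$ gives $d_{sup}(g_i,Id) < \infty$. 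Applying QI-tameness in the class $(K',C')$ produces a constant $R(K,C)$ with $d_{sup}(g_i,Id) < R(K,C)$; evaluating this at points of the form $f'(x)$ and using quasi-isometry control converts it into $d_{sup}(f_i,f') < R(K,C)$ for all large $i$, which is the required $R$-coarse convergence.

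The main obstacle I anticipate is the replacement of the \emph{a priori} arbitrary limit $f$ by the controlled representative $f'$. Without this normalization the coarse inverse of $f$ would have constants inflating with $R_0$, and QI-tameness would only produce a bound $R(K,C,R_0)$ that tautologically involves the initial bound. The subsequential-limit observation in the note preceding the lemma is precisely what makes the $(K,C)$-controlled replacement available, keeping the final $R$ a function of $(K,C)$ alone.
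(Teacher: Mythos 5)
Your reverse direction is correct and coincides with the paper's (the paper tests the constant sequence $f_i=f'$ against an equivalent $f$ rather than against $Id$, but it is the same computation). The forward direction, however, has a genuine gap at the assertion ``$d_{sup}(f_i,f')<2R_0$ for $i\geq N$.'' Coarse convergence is a \emph{pointwise} condition: it gives $\limsup_{i}d(f_i(x),f(x))<R_0$ for each fixed $x$, with the bound uniform but the rate of convergence not uniform in $x$. So there is no single $N$ making $d_{sup}(f_i,f')$ small; nothing in the definition even forces $d_{sup}(f_i,f')$ to be finite for a single $i$ (the discrepancy can escape to infinity in $x$ while the pointwise limsups stay bounded). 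Consequently $g_i=f_i\circ \bar{f'}$ need not be at finite sup-distance from the identity, and QI-tameness cannot be invoked for the $g_i$ at all. This is not a fixable imprecision within your scheme: you are trying to prove a uniform-in-$x$ statement about each $f_i$, which is strictly stronger than $R$-coarse convergence and is false in general.

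The correct place to apply QI-tameness is to the single subsequential pointwise limit $f'$, paired with $f$: since $d(f'(x),f(x))=\lim_j d(f_{i_j}(x),f(x))\leq R_0$ holds at \emph{every} $x$, the pair $(f',f)$ genuinely satisfies $d_{sup}(f',f)\leq R_0<\infty$, and tameness (applied via a coarse inverse exactly as you intended, but to this one pair) upgrades this to $d_{sup}(f',f)\leq R_{K,C}$. Then for each fixed $x$ one has $d(f_{i_j}(x),f(x))\leq d(f_{i_j}(x),f'(x))+R_{K,C}\leq \epsilon+R_{K,C}$ for large $j$, and since every subsequence of $\{f_i\}$ admits such a further subsequence, $\limsup_i d(f_i(x),f(x))\leq R_{K,C}$ for every $x$ --- which is precisely $R$-coarse convergence, a pointwise statement requiring nothing uniform in $x$. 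Your closing paragraph correctly identifies the danger of a bound depending on $R_0$, but the normalization you perform does not remove $R_0$ from the step where it actually enters.
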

\begin{proof}
If $X$ is QI-tame then first discretize $X$.  If $f_i$ coarsely converges to $f$ then the set $\{f_i(x)\}$ is bounded for some (each) $x$. Therefore some subsequence of $f_i$ actually converges to a $(K,C)$ quasi-isometry $f$ which must be within distance $R_{K,C}$ of $f'$ (since $X$ is QI tame). So for each $x$ and $\epsilon$ we have that for large enough $i$ 
$$d(f_i(x),f(x))\leq R_{K,C} + \e.$$
 Conversely, if there exists an $R$ such that every coarsely convergent sequence $R$-coarsely converges then whenever $f' \sim f$ by considering the sequence $f_i=f'$ we see that $d_{sup}(f,f') \leq R$. 
\end{proof}\\
Next, given a uniform subgroup $G \subset QI(X)$ we consider the set $\bar{G}$ of all coarse limits of $G$. We define a topology on $\bar{G}$ by defining a closure operator 
$cl(A)$ that is the set of all coarse limits of nets of elements from $A$. Whyte verifies that $cl(cl(A))=cl(A)$ which is the main property that needs to be checked to ensure that this closure operator does indeed define a topology \cite{Wi}.

\begin{defn}[Closed]\label{closed}  A set $A \subset \bar{G}$ is closed if it contains all of its coarse limits.
\end{defn}

\begin{defn}[Open]\label{open} A set $U \subset \bar{G}$ is open if for any $(K,C)$ quasi-isometry $g$ with $[g] \in U$ and any $s>0$ there exists a finite set $F_{g,s}$ such that if $h$ is a $(K,C)$ quasi-isometry where  $d(h(x),g(x)) < s$ for all $ x \in F_{g,s}$
then $[h]\in U$.
\end{defn}

\begin{prop} If $A$ is closed then $A^C$ is open and if $U$ is open then $U^{C}$ is closed. (And so the two definitions are compatible.)
\end{prop}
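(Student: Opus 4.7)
The plan is to prove each implication by contrapositive, since each contrapositive comes with a natural construction. Note that both Definition \ref{closed} and Definition \ref{open} are stated in terms of $(K,C)$ quasi-isometry representatives, and the closure operator is taken over nets, so I first fix the evident net version of coarse convergence: a net $\{h_\alpha\}$ of $(K,C)$ quasi-isometries coarsely converges to $f$ if there exists $R$ so that, for each $x\in X$, $d(h_\alpha(x),f(x))<R$ eventually in the directed-set sense.

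For the forward direction, assume $A$ is closed and, for contradiction, that $A^C$ fails to be open at some $[g]\in A^C$ with representative $g$ a $(K,C)$-quasi-isometry. Unwinding Definition \ref{open}, there exists $s>0$ such that for every finite $F\subset X$ one can select a $(K,C)$-quasi-isometry $h_F$ with $d(h_F(x),g(x))<s$ for $x\in F$ and $[h_F]\in A$. Directing the finite subsets of $X$ by inclusion gives a net $\{h_F\}\subset A$; for any fixed $x_0\in X$, every $F\supset\{x_0\}$ satisfies $d(h_F(x_0),g(x_0))<s$, so the net coarsely converges to $g$ with uniform constant $s$. Since $A$ is closed, $[g]\in A$, contradicting $[g]\in A^C$.

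For the reverse direction, assume $U$ is open and that some $[g]$ is a coarse limit of a net $\{[h_\alpha]\}\subset U^C$, with coarse convergence constant $R$. If $[g]\in U$, apply Definition \ref{open} with $s:=R+1$ to obtain a finite set $F_{g,s}$. For each $x\in F_{g,s}$ coarse convergence yields an index $\alpha_x$ past which $d(h_\alpha(x),g(x))<s$; because $F_{g,s}$ is finite and the net is directed, there is a single $\alpha_0$ beyond which $d(h_{\alpha_0}(x),g(x))<s$ for all $x\in F_{g,s}$ simultaneously. Openness then forces $[h_{\alpha_0}]\in U$, contradicting $[h_{\alpha_0}]\in U^C$. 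Hence $[g]\in U^C$, so $U^C$ contains all its coarse limits and is closed.

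The only mildly delicate point, which I would flag as the main obstacle, is reconciling the sequence-based Definition \ref{qidef} (coarse convergence) with the net-based closure operator; everything else is a routine finite-exhaustion argument with no geometric input. Once the net version of coarse convergence is in place, both directions are symmetric diagonal constructions built from the defining data.
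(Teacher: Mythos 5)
Your proof is correct and follows essentially the same route as the paper: for the first implication the paper also negates openness and builds a coarsely convergent family of counterexamples (using balls of radius $i$ as an exhaustion rather than your net of all finite subsets), and for the second it likewise pulls back a finite set $F_{h,s}$ from openness and derives a contradiction from eventual closeness (the paper takes $s=2R_{K,C}$ via QI-tameness where you take $s=R+1$ from the given convergence constant). These differences are cosmetic; no gap.
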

\begin{proof}
Let $A$ be closed. Then consider $[h] \in A^{C}$. Suppose that there is some $s$ such that for all finite sets $F$ there exists $h_F$, a $(K,C)$ quasi-isometry such that 
$d(h(x),h_F(x)) \leq s$ but $[h_F] \notin A^C$.
Now let $F_i$ be the ball of radius $i$. Then $h_i:=h_{F_i}$ $s$-coarsely converges to $h$.
But since $A$ is closed we should have $[h]\in A$.

Let $U$ be an open set. 
Now suppose that $[h_i] \in U^{C}$ and that $h_i$ converges coarsely to $h$ with $[h]\in U$. 
Let $s=2R_{K,C}$ and let $F_{h,s}$ be as in the definition. Now since $h_i$  ${R_{K,C}}$-coarsely converges to $h$ let $N$ be large enough so that for $i \geq N$ and for each $x \in F_{h,s}$ 
$$d(h_i(x),h(x))\leq 2R_{K,C}.$$
This shows that for $i\geq N$ we must have $[h_i] \in U$. 
\end{proof}\\
With this topology,  Whyte proves the following in \cite{wh}: 

\begin{prop}[Whyte] For a uniform subgroup $G\subset QI(X)$ of a QI-tame space $X$, the group of all coarse limits $\bar{G}$ is 
a locally compact topological group. 
\end{prop}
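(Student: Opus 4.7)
The plan is to verify in order that $\bar G$ is closed under composition and inversion (so it is a group), that multiplication and inversion are continuous in the coarse-convergence topology, that $\bar G$ is Hausdorff, and that every element has a compact neighborhood.

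First, fix uniform $(K,C)$ constants for $G$. If $[f_n], [g_n]\in G$ coarsely converge to $[f], [g]$ at rate $R$, then $f_n\circ g_n$ are uniformly $(K^2,KC+C)$ quasi-isometries, and the upper Lipschitz bound of $f$ combined with $d_{\sup}(f_n,f)\le R$ gives $d(f_ng_n(x),fg(x))\le R+KR+C$, so $[fg]\in\bar G$. For inverses, each $f_n$ admits a quasi-inverse $f_n^{-1}$ which is a $(K,C')$ quasi-isometry with $d_{\sup}(f_nf_n^{-1},\mathrm{Id}),d_{\sup}(f_n^{-1}f_n,\mathrm{Id})\le D$ for a uniform $D$. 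The lower quasi-isometry bound for $f$ then gives $d(f_n^{-1}(x),f^{-1}(x))\le (R+2D+C)/K$, placing $[f^{-1}]$ in $\bar G$. Applying the same estimates with $[f],[g]\in\bar G$ (rather than in $G$) shows that multiplication and inversion send coarsely convergent sequences to coarsely convergent sequences, so preimages of closed sets are closed, which is continuity with respect to Definition \ref{closed}.

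For Hausdorffness, distinct $[g],[h]\in\bar G$ satisfy $d_{\sup}(g,h)=\infty$, so pick $x_0$ with $M=d(g(x_0),h(x_0))$ as large as desired; the sets $U_g=\{[f]:d(f(x_0),g(x_0))<M/3\}$ and the analogous $U_h$ are disjoint, and applying Definition \ref{open} with the singleton $F=\{x_0\}$ and small enough $s$ verifies that each is open. For local compactness, use bounded geometry to discretize $X$ to a bounded-valence graph, in which closed balls are finite. Let $V=\{[f]\in\bar G:d(f(x_0),x_0)\le R_0\}$; this is open by the same argument. For any sequence with $(K,C)$ representatives $f_n$ in $V$, the point $f_n(x)$ lies in the finite ball of radius $Kd(x,x_0)+C+R_0$ around $x_0$, so a diagonal extraction produces a $(K,C)$ quasi-isometry $f$ with $f_n(x)=f(x)$ eventually for every $x$. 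Hence $[f_n]\to[f]$ coarsely and $V$ is (sequentially) compact.

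The main obstacle is the last step: one must check that this diagonal extraction really produces a coarsely convergent \emph{subnet} when working with the closure topology rather than a first-countable sequential one. The QI-tame hypothesis does the work here, since it forces any candidate limit to lie within $R_{K,C}$ of any representative satisfying the same uniform constants, which is exactly what is needed both to promote sequential compactness to topological compactness and to guarantee that the extracted $f$ is itself a uniform $(K,C)$ quasi-isometry. Verifying this carefully — either by checking first countability directly, or by replacing the diagonal with a Tychonoff argument on finite image-sets in balls — is the essential content of the proposition.
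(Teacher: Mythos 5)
Your proposal is correct in substance and follows the same overall architecture as the paper's proof: Hausdorffness via neighborhoods determined by the values $f(x_0)$, and local compactness via bounded geometry plus an Arzel\`a--Ascoli extraction, with QI-tameness controlling the ambiguity between representatives. Two points of comparison. You do more than the paper in one direction: the paper never explicitly checks that $\bar{G}$ is closed under composition and quasi-inverse, nor estimates continuity of the group operations (it only remarks that composition with a fixed quasi-isometry preserves open and closed sets), whereas your explicit $(K,C)$-bookkeeping supplies this. On the other hand, your separating sets $U_g=\{[f]: d(f(x_0),g(x_0))<M/3\}$ are not literally open in the paper's topology: the defining condition is not well-defined on equivalence classes (two representatives of one class may differ by up to $R_{K,C}$ at $x_0$), and the complement of such a ball is not closed under coarse limits, since a coarse limit can drift inward by the convergence radius. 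The paper sidesteps this by setting $V(x_1,x_2,r)=\{[f] \mid \forall f\in[f],\ d(f(x_1),x_2)\ge r\}$ and taking $U(x_1,x_2,r)=cl(V(x_1,x_2,r))^{C}$, which is open by construction, and then choosing the separation scale $N>2r+R_{K,C}$ so the $R_{K,C}$ slack does not destroy disjointness; your argument goes through once you substitute these complements of closures (the same repair applies to the neighborhood you use for local compactness). Your closing concern about nets versus sequences is legitimate --- the closure operator is defined with nets and the paper silently treats sequential extraction as sufficient --- so spelling out the Tychonoff-style argument would, if anything, improve on the source.
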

\begin{proof}

First we will show that the topology on $\bar{G}$ is Hausdorff. 
For $r>0$ large enough, consider the sets
$$V(x_1, x_2, r):=\{\ [f] \mid \forall f \in [f],\ d(f(x_1),x_2) \geq r \}$$
$$U(x_1,x_2,r):=  cl(V(x_1,x_2,r))^C.$$
Then $U(x_1,x_2,r)$ is open by definition.
 
We will show that if $[f]\neq [g]$ then there exist two disjoint sets $U_1$ and $U_2$ of the form above containing $[f]$ and $[g]$ respectively. If $[f]\neq [g]$ then  for each $N$ there exists an $x_N$ such that 
$d(f(x_N),g(x_N)) \geq N$ (otherwise $f$ would be a bounded distance from $g$).
Now given $r$ pick $x:=x_N$ with $N>2r+R_{K,C}$. Then 
$U_1=U(x,f(x),r)$ and $U_2=U(x,g(x),r)$ are disjoint since if $[h]$ had two representatives $h_1,h_2$  with $d(h_1(x),f(x))\leq r$ and $d(h_2(x),g(x)) \leq r$ then $d(g(x),f(x)) \leq 2r+R_{K,C}$.

Next we note that $U(x,x,r)$ is an open neighborhood of the identity map. If we let 
$A=cl(U(x,x,r))$ then $A$ is a compact neighborhood of the identity since any sequence of quasi-isometries that fixes an $x$ up to a bounded distance has a coarsely convergent subsequence. 
Finally we note that composition with a quasi-isometry preserves open and closed sets. 
This shows that $\bar{G}$ is a locally compact topological group. 
\end{proof}

\section{Embedding uniform envelopes into $QI(\Gamma)$}\label{embedsection}

In this section we show how, given a uniform lattice embedding $\Gamma \subset H$, we can
embed $H$ continuously and with compact kernel as a uniform subgroup of $QI(\Gamma)$ (or equivalently onto a uniform subgroup of $QI(X)$ if $X$ is a model space for $\Gamma$). 
Recall that there is a standard embedding $\rho: \Gamma \to QI(\Gamma)$ given by $\rho(\gamma)=[L_\gamma]$ where $L_\gamma$ is given by left multiplication by $\gamma$.
We use 
 a construction of Furman's from \cite{F} to define a map from $\Phi:H \to QI(\Gamma)$.  Then we show, using our previous work from Section \ref{topsec} on the construction of topologies on uniform subgroups of $QI(X)$,  that $\Phi$ is a continuous map from $H$ onto the uniform subgroup $\Phi(H)$.

\begin{construction}\label{Furmanconstruct} $\Phi: H \to QI(\Gamma)$
\begin{itemize}
\item Let $E \subset H$ be an open neighborhood of the identity $e$, with compact closure such that 
$$H= \bigcup_{\gamma \in \Gamma} \gamma E.$$ 
\item Fix $p: H \to \Gamma$ satisfying $h \in p(h)E$ for each $h \in H$.
\item For each $h\in H$, define  $q_h:\Gamma \to \Gamma$ by the rule $q_h(\gamma):=p(h\gamma)$.
\item Since $e \in E$, we can choose $p$ with $p(\gamma)=\gamma$, so that $q_\gamma(\gamma')=\gamma\gamma'.$
\end{itemize}
\end{construction}
\begin{lemma}[3.3 in \cite{F}]\label{Furman3.3} Let $E,p$ and $\{q_h\}_{h \in H}$ be as above. Then
\begin{description}
\item[](a) Each $q_h : \Gamma \to \Gamma$ is a quasi-isometry of $\gamma$ and its equivalence class $[q_h] \in QI(\Gamma)$ depends only on $h \in H$ (and not on the choice of $E,p$).
\item[](b) The map $\Phi: H \to QI(\Gamma)$, given by $\Phi(h)=[q_h]$, is a homomorphism of (abstract) groups, such $\Phi\vert_\Gamma$
coincides with the standard homomorphism $\rho:\Gamma \to QI(\Gamma).$
\item[](c) $\{q_h\}_{h \in H}$ are $(K,C)$-quasi-isometries for some fixed $K$ and $C$, depending just on $E,p$, and independent of $h \in H$.
\item[](d) There exists a constant $B$ with the following property: given any finite set $F\subset \Gamma$ there is a neighborhood of the identity $V \subset H$ such that $d(q_{h}(\gamma),\gamma)\leq B$ for all $\gamma \in F$ and $h\in V$.

\end{description}
\end{lemma}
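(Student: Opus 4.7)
The strategy is to fix a proper left-invariant metric $d_H$ on $H$ and exploit the fact that, since $\Gamma \subset H$ is a uniform lattice, the inclusion $(\Gamma, d_\Gamma) \hookrightarrow (H, d_H)$ is a quasi-isometry with some constants $(K_0, C_0)$ that depend on nothing else. Let $D := \mathrm{diam}_H(\overline{E})$. The core observation throughout is that $h \in p(h)E$ forces $d_H(p(h), h) \le D$, and so, because $L_h$ is a $d_H$-isometry,
\[
d_H(q_h(\gamma), h\gamma) = d_H(p(h\gamma), h\gamma) \le D \qquad \text{for every } h \in H,\ \gamma \in \Gamma.
\]
That is, $q_h$ agrees with the isometry $L_h$ up to a fixed $d_H$-error $D$.

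For (a) and (c), translating this back to $d_\Gamma$ through the inclusion's QI constants yields, by direct estimation, that each $q_h$ is a $(K,C)$-quasi-isometry of $\Gamma$ with constants depending only on $K_0, C_0, D$, and hence on $E$ alone, not on $h$. That simultaneously proves (c). For the well-definedness part of (a), any two choices $(E,p)$ and $(E',p')$ produce maps $q_h, q'_h$ satisfying $d_H(q_h(\gamma), q'_h(\gamma)) \le D + D'$ pointwise, which transfers to a uniform $d_\Gamma$-bound and hence to $[q_h] = [q'_h]$ in $QI(\Gamma)$.

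For (b), I would compare $q_{h_1 h_2}(\gamma) = p(h_1 h_2 \gamma)$ with $q_{h_1}(q_{h_2}(\gamma)) = p(h_1\, p(h_2\gamma))$: each lies within $d_H$-distance at most $3D$ of $h_1 h_2 \gamma$ (for the second term one uses $d_H(p(h_2\gamma), h_2\gamma) \le D$, left-invariance to propagate through $L_{h_1}$, and then another application of the main inequality). The triangle inequality together with the QI comparison gives $[q_{h_1 h_2}] = [q_{h_1} q_{h_2}]$. Choosing $p$ with $p(\gamma) = \gamma$ on $\Gamma$ yields $q_\gamma(\gamma') = \gamma\gamma' = L_\gamma(\gamma')$, so $\Phi|_\Gamma$ coincides with $\rho$.

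The step I expect to be most delicate is (d), because $d_H$ is only left-invariant while $q_h(\gamma)$ involves a right multiplication by $\gamma$; one needs uniform control of $d_H(h\gamma, \gamma)$ on a finite set $F$ as $h \to e$. The plan is to rewrite $d_H(h\gamma, \gamma) = d_H(\gamma^{-1} h \gamma, e)$ and appeal to joint continuity of conjugation: for each $\gamma \in F$, $\gamma^{-1} h \gamma \to e$ as $h \to e$, so a small enough $V$ — taken as a finite intersection indexed by $F$ — makes $d_H(\gamma^{-1} h \gamma, e) \le 1$ for every $\gamma \in F$ and $h \in V$. Combining with the displayed inequality and the QI comparison produces an explicit bound $B = K_0(D + 1) + C_0$.
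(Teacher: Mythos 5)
The paper does not actually prove this lemma---it is quoted from Furman \cite{F}---and your argument is, in substance, the standard proof: compare $q_h$ with the isometry $L_h$ up to the fixed error $\mathrm{diam}_H(\overline{E})$ and transfer all estimates through the quasi-isometry $(\Gamma,d_\Gamma)\hookrightarrow (H,d_H)$, with the conjugation-continuity trick handling (d). The one point to make explicit is the choice of $d_H$: an arbitrary proper left-invariant metric need not make the inclusion of $\Gamma$ a quasi-isometry, so $d_H$ should be taken to be the word metric with respect to a compact generating set that is a neighborhood of the identity (so that unit balls are neighborhoods and the Milnor--\v{S}varc argument applies); with that choice every step you outline goes through.
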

Combing Lemma \ref{Furman3.3} with Whyte's coarse convergence topology we get the following proposition. (Compare with Theorem 3.5 in \cite{F}.)
\begin{prop}
Let $\Gamma$ be a $QI$-tame finitely generated group. 
Suppose $\Gamma$ is a cocompact lattice in a locally compact topological group $H$. Then $\Phi: H \to QI(\Gamma)$ (as defined in Construction \ref{Furmanconstruct}) is a continuous homomorphism onto a uniform subgroup of $QI(\Gamma)$ with compact kernel and locally compact, compactly generated image.
\end{prop}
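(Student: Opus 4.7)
The plan is to deduce the four assertions---continuity, uniformness of the image, compactness of the kernel, and local compactness/compact generation of the image---from Lemma~\ref{Furman3.3}, the QI-tame hypothesis, and the coarse topology of Section~\ref{topsec}. Parts (a) and (b) of Lemma~\ref{Furman3.3} already give that $\Phi$ is an abstract homomorphism, and part (c) gives uniformness of $\Phi(H)$ since each $q_h$ is a $(K,C)$-quasi-isometry for a common $(K,C)$.

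For continuity, I fix $h\in H$ and an open neighborhood $U$ of $[q_h]$ in the coarse topology and exhibit a neighborhood of $h$ mapped into $U$. The key observation is that, since $\Phi$ is a homomorphism of equivalence classes, $q_{h'}$ and the composition $q_h\circ q_{h^{-1}h'}$ represent the same element of $QI(\Gamma)$; both are quasi-isometries with constants depending only on $(K,C)$, so by QI-tameness $d_{sup}(q_{h'},\,q_h\circ q_{h^{-1}h'})\le R'$ for a constant $R'$ independent of $h'$. Given the finite set $F_{q_h,s}$ supplied by Definition~\ref{open}, Lemma~\ref{Furman3.3}(d) produces a neighborhood $V$ of the identity in $H$ with $d(q_g(\gamma),\gamma)\le B$ for every $g\in V$ and $\gamma\in F_{q_h,s}$; applying the $(K,C)$-quasi-isometry $q_h$ yields $d(q_{h'}(\gamma),q_h(\gamma))\le R'+KB+C$ on $F_{q_h,s}$ whenever $h'\in hV$. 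Choosing any $s$ larger than $R'+KB+C$ forces $\Phi(hV)\subset U$.

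For the kernel, $h\in\ker\Phi$ means $q_h\sim Id$, so by QI-tameness $d_{sup}(q_h,Id)\le R_{K,C}$; in particular $p(h)=q_h(e)$ lies in the finite set $D:=B_{R_{K,C}}(e)\cap\Gamma$, forcing $h\in D\,\bar E$, a compact subset of $H$. Because $\{[Id]\}$ is closed in the Hausdorff topological group supplied by Whyte's proposition, $\ker\Phi$ is also closed, hence compact. To see that $\Phi(H)$ is itself closed in $QI(\Gamma)$ (and therefore locally compact by Whyte's proposition), suppose $\Phi(h_n)\to[g]$ coarsely: then $p(h_n)=q_{h_n}(e)$ stays within bounded distance of $g(e)$ and so lies in a finite subset of $\Gamma$, whence the $h_n$ eventually lie in a fixed compact set, admit a convergent subsequence $h_{n_k}\to h$, and continuity gives $\Phi(h)=[g]$. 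Compact generation of $\Phi(H)$ follows because $H$ contains the finitely generated cocompact lattice $\Gamma$ and a compact fundamental domain, hence is compactly generated, and $\Phi$ is continuous.

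The main obstacle is the continuity step. The open-set definition of the coarse topology is phrased in terms of $(K,C)$-representatives and finite subsets of $\Gamma$, while Lemma~\ref{Furman3.3}(d) only controls $q_g$ near the identity, so passing from ``$h^{-1}h'$ close to $e$ in $H$'' to ``$q_{h'}$ close to $q_h$ pointwise on a prescribed finite set'' requires chaining the quasi-action estimate with QI-tameness as above, and care is needed to ensure all constants depend only on $(K,C)$ and not on the chosen neighborhood.
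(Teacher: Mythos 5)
Your proposal is correct and follows essentially the same route as the paper: Lemma \ref{Furman3.3} combined with the coarse topology of Section \ref{topsec} and QI-tameness. The only differences are that the paper verifies continuity only at the identity (implicitly using that composition with a quasi-isometry preserves open sets) and obtains compactness of $\ker\Phi$ as a byproduct of $\Phi(\Gamma)$ being a cocompact lattice in $\Phi(H)=\Phi(\Gamma)\Phi(\bar{E})$, whereas you check continuity at arbitrary points via $q_{h'}\sim q_h\circ q_{h^{-1}h'}$ and give a direct kernel argument through $q_h(e)\in B_{R_{K,C}}(e)$ --- both sound refinements of the same argument.
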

\begin{proof}
Let $G$ be the uniform subgroup of $QI(\Gamma)$ which contains all the coarse limits of $p(H)$. To show that $\Phi$ is continuous, let $U$ be any open subset in $G$ containing the equivalence class of the identity $[id]$. Then by the definition of open set (Definition \ref{open}) we have a finite set $F_{id,s}$ such that if a $(K,C)$ quasi-isometry $q$ maps each element of $F_{id,s}$ at most distance $s$ from itself then $[q] \in U$. 
Let $s=B$ be as in part (d) of Lemma \ref{Furman3.3}. 
Let $V$ be the neighborhood of the idenity in $H$ corresponding to the set $F_{id,B}$. Then $[q_h]$ is in $U$ for all $h\in V$ since by Lemma \ref{Furman3.3} $d(q_h(x),x) \leq B$ for all $x \in F_{id,B}$. 

By QI-tameness, the standard inclusion $p:\Gamma \to QI(\Gamma)$ has finite kernel and $\Phi(\Gamma)$ is discrete in ${\Phi(H)}$. Let $E$ be as in Construction \ref{Furmanconstruct} above. Since $\Phi$ is continuous then $\Phi(\bar{E})$ is compact and therefore $\Phi(H)=\Phi(\Gamma)\Phi(\bar{E})$ is compactly generated and $\Phi(\Gamma)$ is a cocompact lattice in $\Phi(H)$. This also gives us that $\ker{\Phi}$ is compact. 
\end{proof}

\section{The groups and their geometry}\label{groupsandgeom}

The model spaces for our solvable groups that appear in Theorems \ref{solbsdlthm} and \ref{abcthm} are built out of three primary $CAT(-1)$ space. Each of these will be endowed with a preferred height function (horofunction).  We list  them as examples below. See Section \ref{presec} for basic definitions and properties of $CAT(-1)$ spaces. 

\begin{example}  The regular $m+1$ valent tree $T_{m+1}$. (See Example \ref{treeexample} in Section \ref{geomsec}). \end{example}

\begin{example}\label{nchseg} If $M$ is a matrix with all eigenvalues in norm greater than one then the solvable Lie group 
$$G_{M}=\R \ltimes_M \R^n$$ is a negatively curved homogenous space. Here $\R$ acts on $\R^n$ by a one parameter subgroup $M^t \subset GL(n, \R)$.
The height function $h$ is given by $h(t,x)=t$. When $M$ is a scalar matrix the solvable Lie group $G_M$ is hyperbolic space. 
\end{example}
We can combine the above two examples to get what was coined a \emph{millefeuille} space in \cite{amenhyp}. See also \cite{D4} for more details on its geometry. 
\begin{example}[millefeuille space]\label{mfeg} Let $Z_{m,M} \simeq T_{m+1} \times \R^n$  be the fibered product of $T_{m+1}$ and $G_M$. In other words for each oriented line $\ell \in T_{m+1}$ we identify $\ell \times \R^n$ isometrically with $G_M$ in such a way that the height function on $G_M$ coincides with the height function on $T_{m+1}$. The height function on $Z_{M,m}$ is then given by the compatible height functions on $T_{m+1}$ and $G_M$. 
\end{example}

\subsection{Geometric models for solvable groups}  In this section we combine the negatively curved spaces from above to construct model spaces for certain classes of finitely generated solvable groups.

\begin{defn}[Horocyclic product] Given $CAT(-1)$ spaces $X_1, X_2$ with horofunctions $h_{1}, h_{2}$ define their horocyclic product as 
$$ X_1 \times_h X_2=\{ (x_1, x_2) \mid h_{1}(x_1) + h_{2}(x_2)=0\}.$$ 
We give $X_1 \times_h X_2$ the induced path metric from the $L^2$ metric on  $X_1  \times X_2$ rescaled by a factor of $\sqrt{2}$.
\end{defn}

\begin{example} If $X_1, X_2=\mathbb{H}^2$ viewed in the upper half space model with $\ell(t)=(0,e^t)$ then 
$X_1 \times_h X_2$ is the three dimensional $Sol$ geometry. 
\end{example}

\begin{example} If $X_1=T_{n+1},X_2=T_{m+1}$ are $n+1$ and $m+1$ valent trees respectively  then $X_1 \times_h X_2$ are the Diestel-Leader graphs $DL(n,m)$. When $n=m$ these are Cayley graphs for any lamplighter group of the form $F \wr \Z$ with $|F|=n$.
\end{example}

\begin{example} When $X_1= \mathbb{H}^2$ and $X_2=T_{n+1}$ then $X_1 \times_h X_2$ is the complex $X_n$ described in \cite{FM1}. This complex is a model space for the solvable Baumslag-Solitar group $$BS(1,n)=\left< a,b \mid aba^{-1}=b^n \right>.$$
\end{example}

\begin{example}\label{abcexample} Model spaces for the finitely presented abelian-by-cyclic groups 
$$\Gamma_M=\left< a, b_1, \ldots, b_n \mid ab_ia^{-1}=\phi_M(b_i), b_ib_j=b_jb_i\right>$$ 
where $M$ has no eigenvalues on the unit circle
can be written as horocyclic products of $CAT(-1)$ space. 
Suppose the integral matrix $M$ has absolute Jordan form $\bar{M}$ that can be written as 
$$\bar{M}=\bm \bar{M}_1 & 0 \\ 0 & \bar{M}_2^{-1}\fm$$ 
\begin{itemize}
\item If $\det{M}=1$ then a model space for $\Gamma_M$ is  $G_{\bar{M}_1} \times_h G_{\bar{M}_2}$. This space is equivalent to the solvable Lie group $\R \ltimes_{\bar{M}^t} \R^n$ and can be viewed as a generalized version of $Sol$.  
\item If $\det{M}=d>1$ and all eigenvalues are greater than one in norm (i.e $\bar{M}={\bar{M}_1}$) then a model space for $\Gamma_M$ is given by $G_{\bar{M}} \times_h T_{d+1}$. This space  can viewed as a generalization of the spaces that solvable Baumslag-Solitar groups act on. 
\item If $\det{M}=d>1$ and both $M_1$ and $M_2$ are nontrivial then $\Gamma_M$ has 
$G_{\bar{M}_1}\times_h Z_{\bar{M}_2,d}$ as a model space.
\end{itemize}
\end{example}
See Section \ref{rigabcsec} for more details.

\section{Height-respecting quasi-isometries and boundaries}\label{hrsection}
In this section we describe \emph{height-respecting} isometries and quasi-isometries of the various $CAT(-1)$ spaces and the horocyclic products we described in the previous section.  

\begin{defn} A height respecting (quasi-) isometry of a $CAT(-1)$ space or horocyclic product $X$ with height function $h: X \to \R$ is a self (quasi-) isometry that permutes level sets  of $h$ (up to bounded distance) in such a way that the induced map on height is (bounded distance from) a translation.
\end{defn}
It is easy to see that, up to finite index, the isometry groups of the model spaces  we consider consist of only height-respecting isometries.
The key to our analysis is that quasi-isometries of these model spaces are also all height-respecting. This is a highly non-trivial fact which has been proved over a series of papers by many authors. 
In section \ref{qigroupssec} we give detailed references to these results. 
Once we are able restrict to height-respecting quasi-isometries, we can show that quasi-isometries of our model spaces induce height-respecting quasi-isometries of their defining $CAT(-1)$ factors.

Note that in certain cases the $CAT(-1)$ factors have more than just height respecting (quasi-)isometries: for example  $T_{n+1}$ or $\mathbb{H}^n$ or any negatively curved symmetric space have many non height-respecting isometries and quasi-isometries. In other cases, such as $X=G_{\bar{M}}$ when $\bar{M}$ is not a scalar matrix or $Z_{M,m}$, all quasi-isometries are height-respecting (see \cite{X} and \cite{D4}).

\subsection{Parabolic visual boundaries of $CAT(-1)$ spaces}

The interest in height-respecting (quasi-)isometries of $CAT(-1)$ spaces comes from the fact that these kinds of maps induce particularly nice maps on the parabolic visual boundaries of these spaces.
Recall from Section \ref{presec} that the parabolic visual boundary  which we denote $\partial_l X$ can be defined as the space of vertical geodesics with respect to a fixed height function.

\begin{defn}We equip the lower boundary with a \emph{horocyclic visual metric} given by
$$ d_{a,\epsilon}(\xi, \eta)=a^{t_0}$$
where $t_0$ is the smallest height at which the two geodesics $\xi$ and $\eta$ are less than or equal to distance $\epsilon$ apart.
\end{defn}
Here $\epsilon \geq 0$
and for each space $X$ there is an interval of admissible $a$ which makes $d_{a,\epsilon}$ into a metric. In certain cases there are preferred values for $a$ and $\epsilon$ but note that
by changing $\epsilon$ we get bilipschitz equivalent boundary metrics and by changing $a$ we get \emph{snowflake equivalent} boundary metrics.
Note also that by choosing different but quasi-isometric metrics on $X$ we get \emph{quasi-symmetrically equivalent} metrics on $\partial_l X$.  
The above metrics have been used and studied in \cite{D1,DP,X, D4}.  The following proposition can be found for example in \cite{D4} but we will also include a sketch of the proof here. 

\begin{prop}\label{hrprop} Height respecting (quasi-)isometries of $X$ induce (quasi-)similarities of $\partial_l X$ with respect to the horocyclic visual metric $d_{a,\epsilon}$.
\end{prop}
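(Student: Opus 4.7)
The plan is to track how a height-respecting (quasi-)isometry $f: X \to X$ transforms vertical geodesics and to use the exponential dependence of $d_{a,\epsilon}$ on height to convert a uniform translation in the height coordinate into a multiplicative scaling factor on the boundary. Write $h(f(x)) = h(x) + c + O(1)$ with the additive error controlled by the quasi-isometry constants (the error is zero in the isometry case), and with $c$ the translation amount supplied by the height-respecting hypothesis.

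First I would define the induced boundary map $\bar f: \partial_l X \to \partial_l X$. Given $\xi \in \partial_l X$ regarded as a vertical geodesic, $f \circ \xi$ is a $(K,C)$-quasi-geodesic in $X$. Since $X$ is $CAT(-1)$, Morse stability provides a unique geodesic line at bounded Hausdorff distance from $f \circ \xi$. The height-respecting condition ensures that one endpoint of this geodesic is $\xi_0$, so the other endpoint determines a vertical geodesic which I take to be $\bar f(\xi)$. The key quantitative input is a uniform bound $D$ on the Hausdorff distance between $f \circ \xi$ and $\bar f(\xi)$, depending only on $(K,C)$, on the additive error in the height shift, and on the $CAT(-1)$ constant of $X$.

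Next I would compute $d_{a,\epsilon}(\bar f(\xi), \bar f(\eta))$. If $d_{a,\epsilon}(\xi,\eta) = a^{t_0}$, then $d_X(\xi(t_0), \eta(t_0)) \leq \epsilon$, so applying $f$ yields two points at mutual distance $\leq K\epsilon + C$ lying at height $t_0 + c + O(1)$. These points are within $D$ of $\bar f(\xi)$ and $\bar f(\eta)$ respectively. Thus $\bar f(\xi)$ and $\bar f(\eta)$ come within some bounded distance $C'$ at height $t_0 + c + O(1)$. By the exponential convergence of vertical geodesics in $CAT(-1)$ geometry, any two vertical geodesics coming within an additive constant $C'$ at height $t_1$ are in fact within $\epsilon$ at some height $t_1 + O(1)$, so
\[
d_{a,\epsilon}(\bar f(\xi),\bar f(\eta)) \leq L \cdot a^{c} \cdot d_{a,\epsilon}(\xi,\eta)
\]
for some $L$ depending only on $K$, $C$, $\epsilon$, $a$ and the $CAT(-1)$ constant. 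Running the same argument for $f^{-1}$ (which is also height-respecting, with translation $-c$) gives the reverse inequality and produces a quasi-similarity with similarity constant $s = a^{c}$. In the isometry case one has $K=1$, $C=0$, $D=0$, so $L=1$ and $\bar f$ is a genuine similarity.

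The main obstacle is the existence and uniform control of the vertical-geodesic approximant $\bar f(\xi)$ of $f \circ \xi$, as well as its continuity in $\xi$. Morse stability of quasi-geodesics in $CAT(-1)$ spaces gives a geodesic approximant, but upgrading to a vertical geodesic requires the height-respecting hypothesis to pin down the endpoint $\xi_0$ and the bounded-distance translation of horospheres to pin down the height parametrization. Once this control is in place, the passage from additive errors in $X$ to multiplicative errors in $\partial_l X$ is automatic from the exponential shape of $d_{a,\epsilon}$, and the remaining steps are bookkeeping with the snowflake/bilipschitz ambiguity in the choices of $a$ and $\epsilon$.
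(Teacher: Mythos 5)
Your proof is correct and follows essentially the same route as the paper: track the first height at which two vertical geodesics become $\epsilon$-close, observe that a height-respecting (quasi-)isometry shifts this height by $c$ up to an additive error controlled by $(K,C)$, and exponentiate via $a^{(\cdot)}$ to turn the additive error into the multiplicative constant of a quasi-similarity with similarity factor $a^c$. The paper's version is a brief sketch; your additional care in constructing the induced boundary map via Morse stability and in justifying the exponential-convergence step simply fills in details the paper leaves implicit.
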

\begin{proof} 
Suppose $\phi:X\to X$ is a quasi-isometry that induces a map on the height factor that is a bounded distance from the translation $t\mapsto t+c$. Suppose $f$ is the induced boundary map. Then for any two boundary points $\xi$ and $\eta$, 
if $x=\xi_{t_0}\in X$ is the first point at which geodesics $\xi_{t}$ and $\eta_{t}$ are distance $\epsilon$ apart and $y=f(\xi)_{t_0}\in X$ is the first point at which $f(\xi)_{t}$ and $f(\eta)_{t}$ are distance $\epsilon$ apart then 
$-C'+ht(x)+c \leq ht(y) \leq C'+ht(x) + c$ so that 
$$a^{-C'}a^{ht(x)}a^c \leq a^{ht(y)} \leq a^{C'}a^{ht(x)}a^c$$
 where $C'$ depends only on the quasi-isometry constants $K,C$. Therefore $f$ is an $(a^c,K')$ bilipschitz map:
 $$ \frac{a^c}{K'}d_{a,\epsilon}(\xi,\eta) \leq d_{a,\epsilon}(f(\xi), f(\eta)) \leq a^cK'd_{a,\epsilon}(\xi, \eta).$$
 If $\phi$ is an isometry then $K'=1$ and so the induced boundary map is a similarity.
\end{proof}
\begin{example} In the case of $T_{m+1}$ the lower boundary can be identified with the $m$-adic numbers $\Q_m$ and if we pick $a=m$ and $\epsilon=0$ then the parabolic visual metric $d_{a,\epsilon}$ coincides with the usual metric on $\Q_m$ (see \cite{FM1}).
\end{example}

\begin{example} In the case of $\mathbb{H}^{n+1}$, the lower boundary with respect to a visual horocyclic metric $d_{e,1}$ is just the usual metric on $\mathbb{\R}^n$. 
\end{example}

\begin{example}\label{metricDM} In the case of $G_{\bar{M}}$ where $\bar{M}$ is diagonal but not a scalar matrix the lower boundary can be identified with $\R^n$ with a visual parabolic metric of the form
$$ D_{\bar{M}}(v,w)=\max\{ |\Delta x_1|^{}, |\Delta x_2|^{\alpha_1/\alpha_2}, \ldots ,|\Delta x_r|^{\alpha_1/\alpha_r}\}$$
where $\alpha_1< \alpha_2 < \ldots <\alpha_r $ are the logarithms of the distinct eigenvalues of $\bar{M}$ and $x_1, \ldots, x_r$ represent vectors in the corresponding eigenspaces.
(The case where $\bar{M}$ is in Jordan form but not diagonalizable is more cumbersome to write down and can be found in \cite{DP}).
We show in \cite{D1} that any bilipschitz map of $(\R^n, D_{\bar{M}})$ has to preserve a flag of foliations defined by the $\alpha_i$'s. Specifically for $v=(x_1, \ldots, x_r) \in \R^n$ we have that 
$$f(v)=(f_1(x_1, \ldots, x_r), f_2(x_2,\ldots,x_r), \ldots,f_r(x_r) )$$ where $f_i$ is bilipschitz in $x_i$ (but only H\"older continuous in the other coordinates).  We denote the set of all such maps by $Bilip_{\bar{M}}(\R^n)$.
We also have that for any $k\in \R$, $\bar{M}$ and $\bar{M}^k$ induce snowflake equivalent metrics $D_{\bar{M}}$ and $D_{\bar{M}^k}$ and so we have 
$$Bilip_{\bar{M}^k}(\R^n)\simeq Bilip_{\bar{M}}(\R^n).$$
\end{example}

\begin{example}\label{metricZh2} If $Z_{\bar{M}, m}$ is the millefeuille space constructed from 
$T_{m+1}$ and $G_{\bar{M}}$ then 
$$\partial_l Z_{\bar{M},m} \simeq \R^n \times \Q_m$$ 
where the metric on $\R^n$ is $D_{\bar{M}}$ as given in Example \ref{metricDM}. It is easy to see that if $f$ is a bilipschitz map of $\partial_\ell  Z_{\bar{M},m}$ then 
$$f(x,y)=(f_1(x,y),f_2(y))$$
where $f_1$ is a $Bilip_{\bar{M}}$ map of $x$ for each $y$ and $f_2 \in Bilip(\Q_m)$. We denote the set of all such maps 
by $Bilip_{\bar{M}}(\R^n \times \Q_m)$ and denote the parabolic visual metric by $D_{\bar{M},m}$. (See \cite{D4} for more details).
\end{example}

\begin{defn}[dilation] We call any map $\delta_t$ in $Bilip_{\bar{M}}(\R^n)$ (or in  $Bilip_{\bar{M}}(\R^n \times \Q_m)$) a $t$-dilation (or just dilation) if $\delta_t$ is a similarity with similarity constant $t$ that is simply multiplication by a constant along each eigenspace of $\bar{M}$. 
\end{defn}
\subsection{Boundaries of horocyclic products}\label{bsec}

For $X_h=X_1 \times_h X_2$ we define two boundaries $\partial_1X_h$ and $\partial_2X_h$ as follows.
Let 
$$\mathcal{G}=\{ \ell \in X_h \}$$
be the set of all vertical geodesics and define two equivalence relations $\sim_1$  and $\sim_2$ by
$$ \ell \sim_1 \ell'  \Leftrightarrow  \lim_{t \to \infty} d(\ell(t), \ell(t')) < \infty $$  
$$ \ell \sim_2 \ell'  \Leftrightarrow  \lim_{-t \to \infty} d(\ell(t), \ell(t')) < \infty. $$  
Then we set
$$ \partial_1 X_h = \mathcal{G}/\sim_1,\ \partial_2 X_h = \mathcal{G}/\sim_2. $$
Note that there are many isometric embeddings of $X_1$ (and $X_2$) into $X_h$.
In particular, if $\pi_i : X_1 \times_h X_2 \to X_i$ are the natural projections then for any $\ell_1$ a vertical geodesic in $X_1$, the inverse image $\pi_1^{-1}(\ell)$ is isometric to $X_2$
and similarly $\pi_2^{-1}(\ell)$ is isometric to $X_1$. 
 Therefore we have that $$\partial_l X_i \subset \partial_i X_h.$$ In fact for any two such embeddings 
$\chi(X_i) \subset X_h$ and $\chi'(X_i) \subset X_h$ we have that $\chi(\ell) \sim_i \chi'(\ell)$ and so we can make the identification
$$\partial_l X_i \simeq \partial_i X_h.$$
\subsection{Isometry groups of horocyclic products}

In this section we describe the isometry groups of horocyclic products in terms of their boundaries.
Consider first  $Isom_{hr}(X_h)$, the group of height respecting isometries of $X_h$. 
(For the spaces we consider this is all of $Isom(X_h)$ up to finite index.)
By Proposition \ref{hrprop} we know that height respecting isometries induce similarities of $\partial_i X_h$ for $i=1,2$ so that we have 
$$Isom_{hr}(X_h) \subset Sim(\partial_l X_1) \times Sim(\partial_l X_2).$$
Additionally, we know that if $f\in Isom_{h}(X_h)$ and the induced height translation is $c$ then the induced similarity boundary maps $f_1$ and $f_2$  have similarity constants $a_1^{c}$ and $a_2^{-c}$
where $a_1, a_2$ depend on the visual metric chosen. The following lemma will be useful later when studying uniform groups of quasi-isometries but we include it here because it relies on the structure of the isometry group.  
 
 \begin{lemma}\label{unifiterate} If $U \subset Sim(\partial_l X_1) \times Sim(\partial_l X_2)$ is induced by a uniform group of height respecting quasi-isometries of $X_h$ then $U \subset Isom_{hr}(X_h). $
 \end{lemma}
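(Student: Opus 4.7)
The plan is to take an arbitrary $u = (f_1, f_2) \in U$ and show that the similarity constants $s_1$ and $s_2$ of $f_1$ and $f_2$ satisfy $s_1 = a_1^c$ and $s_2 = a_2^{-c}$ for some common $c \in \R$. As the discussion preceding the lemma explains, this matched-constant condition is precisely what identifies the image of $Isom_{hr}(X_h)$ inside $Sim(\partial_l X_1) \times Sim(\partial_l X_2)$: once such a $c$ is produced, each $f_i$ lifts to a height-respecting isometry $\tilde f_i$ of the $CAT(-1)$ factor $X_i$ shifting height by $\pm c$ (standard for each of the factors used in this paper), and the product map $(\tilde f_1, \tilde f_2)$ restricts to an element of $Isom_{hr}(X_h)$ because the shifts cancel on $\{h_1 + h_2 = 0\}$.

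To extract the matched constants, I would use uniformity via iteration. For each positive integer $n$, the iterate $u^n \in U$ admits some $(K,C)$-quasi-isometry representative $\phi_n$ of $X_h$ with $K,C$ independent of $n$; this is the content of $U$ being a uniform subgroup. Let $c_n$ denote the height translation of $\phi_n$. By Proposition \ref{hrprop}, the induced boundary maps are $(a_1^{c_n}, K')$- and $(a_2^{-c_n}, K')$-quasi-similarities on $\partial_l X_1$ and $\partial_l X_2$, with $K'$ depending only on $(K,C)$. Since these boundary maps are the genuine similarities $f_1^n$ and $f_2^n$ with constants $s_1^n$ and $s_2^n$, taking logarithms yields
$$|n \log s_1 - c_n \log a_1| \leq \log K', \qquad |n \log s_2 + c_n \log a_2| \leq \log K'.$$
Dividing by $n$ and letting $n \to \infty$ forces $c_n/n$ to converge simultaneously to $\log s_1/\log a_1$ and to $-\log s_2/\log a_2$, and the desired equality drops out with $c$ equal to this common limit.

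The main obstacle is precisely the interplay between uniformity and iteration. Applied directly to a single quasi-isometry $\phi$ representing $u$, Proposition \ref{hrprop} only pins down $s_i$ to within a multiplicative factor $K'$ of $a_i^{\pm c}$, which is too coarse to give exact compatibility. Replacing $\phi$ by a uniformly controlled representative $\phi_n$ of $u^n$ transfers this bounded error from $s_i$ to $s_i^n$, and taking $n$-th roots in the limit washes the error out --- but this only works if $K'$ stays the same for every $n$, which is exactly what uniformity of $U$ guarantees. Without uniformity, the quasi-isometry constants of $\phi_n$ could blow up with $n$, the bound $K'$ would deteriorate, and the limiting argument would collapse.
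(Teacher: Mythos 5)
Your argument is correct and is essentially the paper's own proof: both exploit that uniformity gives an iterate-independent bound relating the height translation of a representative of $u^n$ to the similarity constants of $f_1^n$ and $f_2^n$, so that any mismatch between $\log s_1/\log a_1$ and $-\log s_2/\log a_2$ would be amplified linearly in $n$ and violate that bound. The paper phrases this as a contradiction ($d(sc_1,-sc_2)>2R$ eventually) rather than dividing by $n$ and passing to the limit, but the mechanism is identical.
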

 \begin{proof}
 A uniform group of quasi-isometries $U$ has the property that there exists an $R>0$ such that each $f \in U$ induces a map within distance $R$ of a translation on the height factor. 
So for any $f \in U$ we have $f = (f_1, f_2)$ where $f_i$ has similarity constant $a_i^{c_i}$ and $d(c_1,-c_2) \leq 2R$.
If we consider the iterates then $f^s=(f_1^s,f_2^s)$ has similarity constants $n_i^{sc_i}$. If $c_1 \neq -c_2$ then eventually $d(sc_1,-sc_2)>2R$ which is a contradiction.  
  \end{proof}
\subsection{Quasi-isometry groups of horocyclic products}\label{qigroupssec}
Of course the list of groups we describe in Section \ref{groupsandgeom} is not an exhaustive list of groups that arise as horocyclic products but they are chosen precisely because for there groups we know that up to finite index all quasi-isometries are height respecting. This allows us to compute the quasi-isometry groups. The following list gives the quasi-isometry group up to finite index and the reference for where it is proved that quasi-isometries are height-respecting.
\begin{itemize}
\item $QI(Sol)\simeq Bilip(\R) \times Bilip(\R)$  \cite{EFW}
\item $QI(F \wr \Z)\simeq (Bilip(\Q_{|F|}) \times Bilip(\Q_{|F|}))$ \cite{EFW}
\item $QI(BS(1,n))\simeq Bilip(\R) \times Bilip(\Q_n)$ \cite{FM1}

\item$QI(\Gamma_M)\simeq Bilip_{{M_1}}(\R^{n_1}) \times Bilip_{{M_2}}(\R^{n_2})$ 
if $\det{M}=1$ \cite{EFW,P1,P3}

$\quad \quad \quad\ \ \  \simeq Bilip_{{M}}(\R^{n}) \times Bilip(\Q_{d})$ or 

$\quad \quad \quad\ \ \ \ \ \ \ \ \  Bilip_{{M_1}}(\R^{n_1}) \times Bilip_{M_2}(\R^{n_2} \times \Q_{d})$ if $d>1$  \cite{FM3}
\end{itemize}
The last case of $QI(\Gamma_M)$ answers Question 10.4 in \cite{FM3}. Farb and Mosher show that all quasi-isometries are height-respecting but since they do not view their model spaces as horocyclic products of $CAT(-1)$ spaces they are not able to make the identification of the quasi-isometry group with the group of products of bilipschitz maps on both boundaries. 

\subsection{QI-tameness}\label{qitamesec}

\begin{prop} The groups $Sol, F \wr \Z, BS(1,n)$ and $\Gamma_M$ are all QI-tame. 
\end{prop}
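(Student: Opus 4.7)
The plan is to prove that for each model space $X$ appearing in the Proposition, any $(K,C)$-quasi-isometry $f$ of $X$ with $d_{sup}(f,Id)<\infty$ must actually satisfy $d_{sup}(f,Id)\le R_{K,C}$ for some $R_{K,C}$ depending only on $(K,C)$; since QI-tameness is preserved under quasi-isometry this will give the statement for the groups. First I would apply the QI-rigidity theorems catalogued in Section \ref{qigroupssec} to assume, at the cost of a constant depending only on $(K,C)$, that $f$ is height-respecting. By Proposition \ref{hrprop}, $f$ then induces quasi-similarities on each parabolic visual boundary $\partial_\ell X_i$. Since $f\sim Id$ and the induced boundary maps and the height translation are invariants of the quasi-isometry class $[f]$, the induced boundary maps must both equal the identity and the height translation of $f$ must be zero.

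Next I would argue by contradiction. If no uniform $R_{K,C}$ existed, there would be a sequence $(f_n)$ of $(K,C)$-QIs in $[Id]$ with $d_{sup}(f_n,Id)\to\infty$. Picking $x_n\in X$ with $d(f_n(x_n),x_n)\ge d_{sup}(f_n,Id)/2$ and using the cocompact action of $\Gamma\subset Isom(X)$ on $X$, I would pre- and post-compose $f_n$ by suitable isometries $\gamma_n$ to produce $g_n=\gamma_n f_n \gamma_n^{-1}$, still $(K,C)$-QIs in $[Id]$, but now satisfying $d(g_n(x_0),x_0)\to\infty$ for a fixed basepoint $x_0$. Each $g_n$ still has identity action on both boundaries $\partial_1 X\simeq \partial_\ell X_1$ and $\partial_2 X\simeq \partial_\ell X_2$, and zero height translation.

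Now let $\ell_0$ be the vertical geodesic through $x_0$; it has a well-defined pair of endpoints $(\xi_1,\xi_2)\in \partial_1 X\times \partial_2 X$, and since $g_n$ fixes both, $g_n(\ell_0)$ is a $(K,C)$-quasi-geodesic sharing these endpoints. A Morse-type lemma in $X$ then forces $g_n(\ell_0)$ to lie within an $R'_{K,C}$-neighborhood of the unique vertical geodesic joining $\xi_1$ and $\xi_2$, namely $\ell_0$. Combined with the bound on the height translation, this forces $g_n(x_0)$ to lie in a bounded neighborhood of $x_0$, contradicting $d(g_n(x_0),x_0)\to\infty$. The main obstacle is verifying this Morse-type rigidity uniformly across all the model spaces: for $Sol$, $BS(1,n)$, and $F\wr\Z$ it follows from the classical Morse lemma in each $CAT(-1)$ factor together with the uniqueness of vertical geodesics between prescribed boundary points, but for the $\Gamma_M$ model spaces involving negatively curved homogeneous factors $G_{\bar{M}}$ or millefeuille spaces $Z_{\bar{M},m}$ one must appeal to the quasi-geodesic rigidity results of \cite{P1,P3} and \cite{D4} to get the analogous conclusion.
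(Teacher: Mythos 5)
Your proof is correct in substance, but it takes a more self-contained route than the paper, whose entire argument is a two-sentence appeal to the literature: the papers computing $QI$ of these groups show that every $(K,C)$ quasi-isometry is within a uniform distance $R_{K,C}$ of a \emph{standard} quasi-isometry, and that two distinct standard quasi-isometries are never a bounded distance apart; QI-tameness is then immediate, since a quasi-isometry equivalent to the identity must have the identity as its standard representative. You instead extract only the height-respecting statement from those references and then re-derive the needed uniformity by hand, via the renormalization $g_n=\gamma_n f_n\gamma_n^{-1}$ and a Morse-type tracking argument for $g_n(\ell_0)$. This buys a proof that is more transparent about where the constant $R_{K,C}$ comes from, at the cost of one step that you should make precise: an arbitrary $(K,C)$-quasi-geodesic segment in a horocyclic product such as $Sol$ joining two points of a vertical geodesic is \emph{not} obviously uniformly close to it (these spaces are not hyperbolic), so the "Morse-type lemma in $X$" cannot be invoked for raw quasi-geodesics. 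What saves you is that $g_n$ is height-respecting with uniformly controlled constants, so $g_n(\ell_0)$ is coarsely height-monotone; its projections $\pi_i(g_n(\ell_0))$ to the $CAT(-1)$ factors are then genuine $(K',C')$-quasi-geodesics (the height coordinate supplies the lower bound), and the classical Morse lemma in each factor, together with the uniqueness of the vertical geodesic with prescribed endpoints in $\partial_1 X\times\partial_2 X$, gives the uniform $R'_{K,C}$-tracking of $\ell_0$. With that caveat spelled out, your argument goes through for all the model spaces listed, since every factor ($\mathbb{H}^2$, $T_{n+1}$, $G_{\bar M}$, $Z_{\bar M,m}$) is Gromov hyperbolic; the references \cite{EFW,FM1,FM3,P1,P3} are needed only for the height-respecting input, exactly as in the paper.
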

The results needed to show QI-tameness of the these groups can be found in the papers where the respective quasi-isometry groups are calculated. 
They amount to showing that a $(K,C)$ quasi-isometry is distance $R_{K,C}$ from a \emph{standard quasi-isometry} and that two different standard quasi-isometries are never a bounded distance apart.

\section{Conjugation Theorems}\label{conjsec}
In this section we recall various theorems that prove that under certain conditions a
uniform subgroup of bilipschitz maps can be conjugated via a bilipschitz map into a group of similarity maps.

\begin{thm}(Theorem 7 in \cite{MSW1})\label{MSWconj} Given $m\geq 2$, suppose that $U \subset Bilip(\mathbb{Q}_m)$ is a uniform subgroup. Suppose in addition that the induced action of $U$ on the space of distinct pairs in $\mathbb{Q}_m$  is cocompact. Then there exists $p \geq 2$ and a bilipschitz homeomorphism $\mathbb{Q}_m \mapsto \mathbb{Q}_p$ which conjugates $H$ into the similarity group $Sim(\mathbb{Q}_p)$. Note that this also means that $p$ and $m$ are powers of a common base (see the appendix in \cite{FM2}).
 \end{thm}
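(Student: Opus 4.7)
The plan is to translate this statement about boundaries into a statement about the tree $T_{m+1}$, apply a quasi-action to isometric-action conversion on trees, and then push the resulting isometric action back to the boundary, where it automatically becomes a similarity action.

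First I would identify $\mathbb{Q}_m$ with the parabolic visual boundary $\partial_\ell T_{m+1}$ (as in Example $6$ of Section \ref{bsec}). Via Proposition \ref{hrprop} and its converse, a uniform group $U \subset Bilip(\mathbb{Q}_m)$ lifts to a uniform quasi-action of $U$ on $T_{m+1}$ by height-respecting quasi-isometries, well-defined up to bounded error. The hypothesis that $U$ acts cocompactly on distinct pairs in $\mathbb{Q}_m$ translates, using the correspondence between pairs of boundary points and vertical bi-infinite geodesics in $T_{m+1}$, into the statement that this quasi-action is cobounded on a suitable line space of $T_{m+1}$ --- equivalently, the quasi-action on $T_{m+1}$ is itself cobounded.

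Next I would invoke the main theorem on quasi-actions on bounded valence bushy trees (Mosher--Sageev--Whyte): every cobounded uniform quasi-action of a group on such a tree is quasiconjugate to a genuine isometric cobounded action on a (possibly different) bounded valence bushy tree $T'$. This produces a quasi-isometry $\psi: T_{m+1} \to T'$ intertwining the two actions up to bounded error. A standard combinatorial cleanup (subdivision and pruning of valence one vertices), together with the coboundedness of the isometric action, lets one assume $T' = T_{p+1}$ is regular of some valence $p+1$, with $p \geq 2$. Since $\psi$ is a quasi-isometry of bounded valence trees it is height-respecting up to bounded error, so it extends to a bilipschitz homeomorphism $\bar\psi: \mathbb{Q}_m \to \mathbb{Q}_p$ by Proposition \ref{hrprop}. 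The constraint that $p$ and $m$ share a common base then comes from the fact (appendix of \cite{FM2}) that $\mathbb{Q}_m$ and $\mathbb{Q}_p$ are bilipschitz equivalent only when they are.

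Finally, since the conjugated action of $U$ on $T_{p+1}$ is by honest (height-respecting) isometries, Proposition \ref{hrprop} applied with quasi-isometry constants $K=1, C=0$ shows that the induced boundary action is by exact similarities, so $\bar\psi \, U \, \bar\psi^{-1} \subset Sim(\mathbb{Q}_p)$. The main obstacle is the second step: producing the genuine isometric action on $T_{p+1}$ from the quasi-action on $T_{m+1}$. Concretely, one must extract from the cobounded quasi-action a $U$-equivariant family of ``cut sets'' or ``coarse tree structure'' and verify that this family actually assembles into a real simplicial tree of uniformly bounded valence --- this is where coboundedness on pairs (rather than just coboundedness on points) is essential, since it provides enough group elements to witness the bi-infinite geodesic skeleton of $T_{m+1}$ up to conjugation.
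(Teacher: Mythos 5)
This theorem is not proved in the paper at all --- it is imported verbatim as Theorem 7 of \cite{MSW1} --- so the only meaningful comparison is with the Mosher--Sageev--Whyte argument, and your outline does reproduce its architecture faithfully: translate the uniform quasi-similarity action on $\Q_m$ into a cobounded quasi-action on $T_{m+1}$ fixing an end, apply the main quasi-action-to-isometric-action theorem for bounded valence bushy trees, and read the conclusion off the boundary. Note, though, that the entire content of the theorem is concentrated in the step you explicitly defer to ``the main theorem on quasi-actions on bounded valence bushy trees,'' which is the main theorem of the very paper being cited; so what you have written is the (correct) reduction that MSW themselves perform, not an independent proof.

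There are two places where your reduction is mis-justified. First, the claim that ``since $\psi$ is a quasi-isometry of bounded valence trees it is height-respecting up to bounded error'' is false: a quasi-isometry of trees, even one sending the distinguished end to the distinguished end, only preserves Busemann functions up to multiplicative (not additive) error, so a priori the induced boundary map is merely quasi-symmetric, not bilipschitz. The upgrade to a bilipschitz (indeed quasi-similarity) conjugacy is a genuine step: one uses the equivariance of $\psi$ together with cocompactness on distinct pairs to move an arbitrary pair of boundary points into a bounded configuration by a group element that acts as a uniform quasi-similarity on the source and an exact similarity on the target, thereby controlling the distortion of $\partial\psi$ at every scale and location. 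Second, the output tree $T'$ of the quasi-action theorem is only bounded valence and bushy; no amount of subdivision and pruning makes it regular, and a cobounded end-fixing isometric action does not force regularity either. The identification of $\partial T'\setminus\{\xi'\}$ with some $\Q_p$ (and the determination of $p$, via the finite quotient graph of groups and the translation length/index data of the end-fixing action, as in Section \ref{finmanysec} of this paper) is where $p$ actually comes from, and it is also the source of the final assertion that $p$ and $m$ are powers of a common base. Both gaps are fixable and both are handled in \cite{MSW1}, but as written your steps asserting them do not follow from the reasons you give.
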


\begin{thm}(Theorem 3.2 in \cite{FM2})\label{FMconj} Let $U \subset Bilip(\R)$ be a uniform subgroup. Then there exists $f\in Bilip(\R)$ that conjugates $U$ into $Sim(\R)$.
\end{thm}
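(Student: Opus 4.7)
The plan is to produce a bilipschitz map $f:\R\to\R$ such that conjugation by $f$ turns every $u\in U$ into an affine map $y\mapsto c_u y + b_u$. The key is to choose $f$ so that $f'=\phi$ is a bounded positive measurable function satisfying a multiplicative cocycle identity: for each $u\in U$ there is a constant $c_u>0$ with
$$\phi(u(x))\,u'(x) \;=\; c_u\,\phi(x) \quad \text{for a.e.\ } x.$$
Granted such $\phi$, the chain rule gives $(f\circ u)'(x)=\phi(u(x))u'(x)=c_u\phi(x)=c_u f'(x)$, so $f\circ u = c_u f + b_u$ for some $b_u\in\R$, i.e.\ $f u f^{-1}\in Sim(\R)$.

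First I would collect standing facts: since every $u\in U$ is $K$-bilipschitz for a common $K$, Rademacher's theorem yields a.e.\ differentiability with $1/K\le|u'(x)|\le K$, and the group law forces the derivative cocycle to satisfy $(uv)'(x) = u'(v(x))v'(x)$ a.e. Passing to the index-two subgroup of orientation-preserving elements if necessary, I may assume $u'>0$ everywhere; orientation-reversing elements would conjugate to $y\mapsto -c_u y + b_u$, still in $Sim(\R)$. The problem thus reduces to solving the above cohomological equation within the class of bounded positive measurable functions.

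To construct $\phi$ I would exploit that the closure $\bar{U}$ of $U$ in the topology of uniform convergence on compacta is a locally compact topological group, by Arzelà–Ascoli applied using the uniform bilipschitz bound. With a left Haar measure $\nu$ on $\bar{U}$, an averaging candidate $\phi(x):=\int_{\bar U} u'(x)\,d\nu(u)$ (suitably normalized against the modular function of $\bar U$) gives a bounded density transforming under the $U$-action by the required multiplicative cocycle, up to a global character; any residual character factor can be absorbed into the constants $c_u$. Then $f(x)=\int_0^x\phi(t)\,dt$ is bilipschitz because $1/K'\le\phi\le K'$, and the earlier chain-rule computation shows $f u f^{-1}$ is affine for every $u\in U$.

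The main obstacle is producing $\phi$ with simultaneously the correct cocycle (no stray modular factor) and two-sided bounds $1/K'\le\phi\le K'$. Since $\bar{U}$ need not be unimodular, direct Haar averaging requires careful bookkeeping. A cleaner alternative I would consider is a Markov–Kakutani style fixed point argument: let $\bar U$ act on the convex weak-$*$ compact set of bounded positive densities on $\R$ (modulo scaling) via $u\cdot\phi := (\phi\circ u^{-1})\cdot (u^{-1})'$, and extract a fixed point; this directly produces a $U$-quasi-invariant $\phi$ without needing to unwind modular factors. Verifying that the orbit-space of densities is convex, compact, and $\bar U$-invariant under the compact-open topology provided by Section~\ref{topsec} is the delicate point to check.
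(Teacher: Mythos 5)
This theorem is not proved in the paper at all: it is imported verbatim as Theorem 3.2 of Farb--Mosher \cite{FM2}, which in turn rests on Hinkkanen's analysis of uniformly quasisymmetric groups of the line \cite{Hink}. Your reduction of the statement to the existence of a bounded, two-sided-bounded density $\phi$ trivializing the derivative cocycle, $\phi(u(x))u'(x)=c_u\phi(x)$, is a correct and faithful reformulation (indeed, if $f$ is the conjugating map of the theorem then $\phi=f'$ is such a density, and conversely $f(x)=\int_0^x\phi$ recovers the conjugacy), and the ``easy half'' of your argument --- chain rule for bilipschitz maps a.e., absolute continuity, passing to the orientation-preserving subgroup --- is fine. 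But all of the content of the theorem is in producing $\phi$, and both mechanisms you propose for that have genuine gaps.

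First, the Haar-averaging candidate $\phi(x)=\int_{\bar U}u'(x)\,d\nu(u)$ diverges whenever $\bar U$ is noncompact (which is the typical case, e.g.\ when $U$ contains unbounded translations or dilations): the integrand is bounded below by $1/K>0$ and the Haar measure of a noncompact group is infinite, so no normalization or modular-function bookkeeping can rescue it; the formal computation showing the cocycle identity presupposes convergence. Second, the fixed-point alternative: Markov--Kakutani applies only to commuting families, and $U$ is not assumed abelian; the noncommutative substitute (Day's theorem) requires $\bar U$ to be amenable, which is not a hypothesis and is not known a priori --- amenability of $U$ is a \emph{consequence} of the theorem (once conjugated into $Sim(\R)$, a solvable group), so invoking it here is circular. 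Moreover, the action you must use is the renormalized one on probability-normalized densities, which is projective rather than affine, so even amenability would not directly give a fixed point; and a fixed point of the unnormalized affine action on the cone would force $c_u\equiv 1$ for all $u$, which is false as soon as $U$ contains an element conjugate to a nontrivial dilation. Note also that the theorem carries no cocompactness-on-pairs or separability hypothesis, unlike its higher-dimensional analogues (Theorems \ref{Dconj} and \ref{mytukia2}); this is a genuinely one-dimensional phenomenon, and Hinkkanen's actual proof is a direct, hands-on quasisymmetry argument rather than any soft averaging. As written, your construction of $\phi$ does not go through.
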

\begin{thm}(Proposition 9  in \cite{D1})\label{Dconj} Let $U \subset Bilip(\R^n)$ for $n\geq 2$ be a uniform separable subgroup that acts cocompactly on distinct pairs of points. Then there exists $f\in Bilip(\R^n)$ that conjugates $U$ into $Sim(\R)$.
\end{thm}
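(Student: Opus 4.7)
The plan is to mimic Tukia's conjugation of uniform quasiconformal groups via a Sullivan-type averaging construction, adapted to the purely bilipschitz setting. The rough idea is to build, at each point $x \in \R^n$, a measurable "infinitesimal dilation structure" invariant under $U$, and then integrate it to produce the conjugating bilipschitz map $f$. The cocompact action on pairs plays the role of the non-elementarity hypothesis in Tukia's theorem, guaranteeing enough dynamical mixing for the averaging to converge.

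First, I would exploit cocompactness on distinct pairs to find, for every point $p \in \R^n$ and every scale $r > 0$, an element $g_{p,r} \in U$ sending a fixed reference pair $(0, v)$ close to $(p, p + rv)$. These elements serve as approximate $r$-dilations centered at $p$, and the uniform bilipschitz constant $K$ gives quantitative control over how far each $g_{p,r}$ deviates from an actual similarity. Using the separability hypothesis, I would pass to a countable measurable selection of such elements indexed by $(p,r)$, which is the step that keeps the construction well-defined at the measure-theoretic level.

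Next, I would construct a $U$-invariant "bilipschitz structure" on $\R^n$: for each $x$, define a measurable assignment $x \mapsto \mu_x$ of a dilation class by pulling back a fixed reference class at a basepoint along the chosen $g_{p,r}$, and averaging over the orbit space of $U$ on pairs (which is compact). The averaging collapses the bilipschitz ambiguity in each approximate dilation, producing a measurable field invariant under $U$ up to genuine scaling. A rigidity/uniformization step then identifies this invariant field with the standard flat structure on $\R^n$ via a bilipschitz map $f$, so that $fUf^{-1} \subset Sim(\R^n)$. The one-dimensional analogue of the final step is exactly Theorem~\ref{FMconj}, and in higher dimensions the needed uniformization is essentially Tukia's theorem on measurable conformal structures.

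The main obstacle will be carrying out the averaging in the coarse bilipschitz setting, where derivatives are unavailable pointwise and the "infinitesimal" data exist only up to bounded multiplicative distortion. Two related difficulties arise: first, showing that the averaged structure is actually measurable and not merely defined almost everywhere along orbits; second, showing that local bilipschitz estimates on the conjugating map $f$ assemble into a global bilipschitz bound. The separability of $U$ is exactly what is needed for the first, while cocompactness on pairs, applied to shift every point into a fixed compact piece of pair space, is what handles the second by transporting the local estimate globally. If a direct averaging proves too delicate, an alternative is to define $\tilde{d}(x,y)$ implicitly by taking an infimum of $s$ over those $g \in U$ whose action on the pair $(x,y)$ is approximately an $s$-dilation, and then verify that $\tilde{d}$ is bilipschitz to the ambient metric and that $U$ acts on $\tilde{d}$ by genuine similarities.
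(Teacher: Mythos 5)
First, note that the paper does not prove this statement at all: it is imported verbatim as Proposition~9 of \cite{D1} (with Tukia \cite{T} as the underlying method), so the only thing to compare you against is that cited proof. Your blueprint is essentially the same Tukia-style argument that \cite{D1} uses: uniform bilipschitz maps are differentiable a.e.\ with derivatives bounded in terms of $K$, one forms a $U$-invariant measurable conformal structure by taking a barycenter of the bounded orbit of the standard structure in the fiberwise symmetric space $SL_n(\R)/SO(n)$ (separability is what lets the a.e.-invariance hold simultaneously for a countable dense subgroup and then extend), and cocompactness on pairs supplies the dynamics needed to turn the measurable invariant object into an actual conjugacy. So the skeleton is right and matches the source.

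The one place your plan would break if executed literally is the passage from the invariant measurable structure to the conjugating map $f$. You propose to ``integrate'' the infinitesimal structure, and you invoke ``Tukia's theorem on measurable conformal structures'' as a uniformization step; but for $n\geq 3$ there is no measurable Riemann mapping theorem, so an invariant measurable conformal (or Riemannian) structure cannot in general be integrated. Tukia's actual mechanism — and the one visible in this paper's Proposition~\ref{RNnoconjlem}, where $f=\lim_{i}\delta_i a g_i$ — is a blow-up: at a point of approximate continuity of the invariant structure that is also a radial point (cocompactness on pairs makes every point radial), one rescales group elements and passes to a limit; the limit map conjugates $U$ into the group of bilipschitz maps preserving a constant conformal structure a.e., and a Liouville-type rigidity (a $1$-quasiconformal, hence Möbius, bilipschitz map of $\R^n$ is a similarity) finishes the job. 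That renormalization step, together with the final Liouville identification, is missing from your outline; your fallback construction of an infimum metric $\tilde d$ would face the separate and nontrivial problems of showing $\tilde d$ is a metric bilipschitz to the Euclidean one and that $U$ acts on it by exact similarities, so it is not a safe substitute. (Also, the $Sim(\R)$ in the statement is a typo for $Sim(\R^n)$.)
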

In the case of  general $\partial_l G_{\bar{M}}$ we have a slightly weaker theorem.

\begin{defn}\label{defnalmosttrans1} We write $ASim_{\bar{M}}(\R^n)$ for the set of similarities $Sim_{\bar{M}}(\R^n)$ composed with maps in $Bilip_{\bar{M}}(\R^n)$ of the form   
 $$(x_1,x_2, \cdots , x_r) \mapsto (x_1 + B_1(x_2, \cdots , x_r), x_2 + B_2(x_3,\cdots,x_r), \cdots , x_r + B_r).$$
 We call such maps 
\emph{almost translations}.
\end{defn}

\begin{thm} \label{mytukia2}(Theorem 2 in \cite{D1} and \cite{DP}) Let $U$ be a uniform separable subgroup of $Bilip_{\bar{M}}(\R^n)$ that acts cocompactly on the space of distinct pairs of points of $\R^n$. Then there exists a map $f \in Bilip_{\bar{M}}(\R^n)$ that conjugates $U$ into $ASim_{\bar{M}}(\R^n)$.
\end{thm}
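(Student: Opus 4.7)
The plan is to adapt the Tukia conjugation strategy to the anisotropic metric $D_{\bar{M}}$, exploiting the flag of foliations preserved by every element of $Bilip_{\bar{M}}(\R^n)$. Since $U$ acts uniformly and cocompactly on distinct pairs, combining any $u\in U$ with the standard dilations $\delta_t \in Sim_{\bar{M}}(\R^n)$ yields, for any two pairs of points at comparable $D_{\bar{M}}$-scales, an element of $U$ that carries one pair close to the other. This gives $U$ a convergence-group flavor in the $D_{\bar{M}}$-metric and, via separability, lets me extract subsequential limits of the form $g_n = \delta_{t_n} \circ u_n \circ \delta_{s_n}^{-1}$ at density points, producing tangent maps for elements of $U$.

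At a generic point $p$ I would analyze the form of these tangent maps. A map $u \in Bilip_{\bar{M}}(\R^n)$ is blockwise bilipschitz in $x_i$ and only H\"older, with exponent $\alpha_1/\alpha_i < 1$, in $x_{i+1},\ldots,x_r$; under the $\delta_{t_n}$-rescaling the H\"older contributions from the higher-index variables collapse to constants, so each tangent map takes the triangular form of Definition \ref{defnalmosttrans1}: a similarity along each eigenblock plus a constant shift carried up from higher-index data. This is precisely the structure defining $ASim_{\bar{M}}(\R^n)$, and it is the reason the theorem produces almost-translations rather than genuine similarities. I would then promote this pointwise tangent information to a global bilipschitz conjugation $f$ by working inductively down the flag $\mathcal{F}_1 \supset \cdots \supset \mathcal{F}_r$: straighten the leading $x_1$-direction first via a Tukia-style integrated tangent-map construction, then pass to the induced $U$-action on the leaf quotient (which inherits a uniform $Bilip_{\bar{M}'}$-action that is still cocompact on distinct pairs, for a matrix $\bar{M}'$ with one fewer eigenblock) and reapply the theorem. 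The base case $r=1$ reduces to Theorem \ref{Dconj}, or to Theorem \ref{FMconj} when $n=1$.

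The main obstacle is the anisotropic regularity across the flag: because each $f_i$ is only H\"older in the higher-index coordinates, Rademacher differentiation and classical measurable-selection averaging cannot be applied blockwise, and one has to verify by hand that the rescaled limits $g_n$ converge in a way that assembles into a genuinely \emph{bilipschitz} conjugating map $f \in Bilip_{\bar{M}}(\R^n)$ rather than merely a measurable one. The key technical point — and the precise reason one obtains $ASim_{\bar{M}}$ rather than $Sim_{\bar{M}}$ — is showing that the residual H\"older contributions in each block organize into honest constant shifts $B_i(x_{i+1},\ldots,x_r)$ and not wilder perturbations; this is exactly the refinement of Tukia's method carried out in detail in \cite{D1,DP}.
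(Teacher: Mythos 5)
This statement is not proved in the paper at all: Theorem \ref{mytukia2} is imported verbatim from \cite{D1} and \cite{DP}, and Section \ref{conjsec} only \emph{recalls} it. So there is no in-paper argument to compare against line by line. That said, the structural outline you give does match what the paper itself reveals about the proof in those references --- in the proof of Proposition \ref{noconjM} the author states that the conjugating map of Theorem \ref{mytukia2} ``is constructed by induction on the number of distinct eigenvalues of $\bar{M}$'' with base case the classical Tukia-type results (Theorems \ref{FMconj} and \ref{Dconj}), which is exactly your induction down the flag with the $r=1$ base case. Your explanation of why one only lands in $ASim_{\bar{M}}(\R^n)$ rather than $Sim_{\bar{M}}(\R^n)$ --- the H\"older (exponent $\alpha_1/\alpha_i<1$) dependence on higher-index coordinates degenerating to the constant shifts $B_i$ of Definition \ref{defnalmosttrans1} --- is also the right mechanism.

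However, as a proof your text is a program rather than an argument: every genuinely hard step is named and then deferred back to \cite{D1,DP}, namely (i) that the rescaled subsequential limits assemble into a single globally defined \emph{bilipschitz} map $f\in Bilip_{\bar{M}}(\R^n)$ rather than a merely measurable one, and (ii) that the residual H\"older terms really organize into the triangular almost-translation form and nothing worse. Since those two points are the entire content of the theorem, the proposal cannot stand independently of the cited references. One smaller inaccuracy: composing $u\in U$ with dilations $\delta_t$ does not produce elements \emph{of $U$}; the rescaled maps $\delta_{t_n}\circ u_n\circ\delta_{s_n}^{-1}$ live only in a uniformly bilipschitz family (uniformity of $U$ is what keeps the constants controlled), and it is cocompactness on distinct pairs that lets one choose $u_n\in U$ so that these rescalings have convergent subsequences. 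The conclusion you want survives this correction, but the statement as written conflates $U$ with its rescaled conjugates.
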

We have a similar theorem for $\partial Z_{\bar{M},m} \simeq \R^n \times \Q_m$.
\begin{defn}\label{defnalmosttrans2} We write $ASim_{\bar{M}}(\R^n\times \Q_m)$ for
the set of similarities $Sim_{\bar{M}}(\R^n \times \Q_m)$ composed with maps in  $Bilip_{\bar{M}}(\R^n \times \Q_m)$ of the form
$$(x_1,x_2, \cdots , x_r,y) \mapsto (x_1 + B_1(x_2, \cdots , x_r,y), \cdots , x_r + B_r(y), y)$$
which we also call \emph{almost translations}.
\end{defn}

\begin{thm}\label{anothertukia}(Theorem 6 in \cite{D4}) Let $U$ be a uniform separable subgroup of $Bilip_{\bar{M}}(\R^n \times \Q_m)$ that acts cocompactly on the space of distinct pairs of points of $\R^n \times \Q_m \simeq \partial_\ell Z_{\bar{M},m}$. Then there exists a map in $Bilip_{\bar{M}}(\R^n \times \Q_m)$ that conjugates
$U$ into $ASim_{\bar{M}}(\R^n\times \Q_p)$ where $p$ is a number such that $p$ and $m$ are powers of a common base. 
\end{thm}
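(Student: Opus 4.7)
The plan is to proceed in two stages, exploiting the product structure of $\partial_\ell Z_{\bar M, m} \simeq \R^n \times \Q_m$ and the form $f(x,y) = (f_1(x,y), f_2(y))$ of every element of $Bilip_{\bar M}(\R^n \times \Q_m)$: first conjugate the $\Q_m$-action into $Sim(\Q_p)$ using Theorem \ref{MSWconj}, and then conjugate the remaining fiberwise $\R^n$-action into $ASim_{\bar M}(\R^n)$ using Theorem \ref{mytukia2}.

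For the first stage, projection $\pi : U \to Bilip(\Q_m)$, $f \mapsto f_2$, is a well-defined homomorphism onto a uniform subgroup. Cocompactness of $U$ on distinct pairs of $\R^n \times \Q_m$ projects to cocompactness of $\pi(U)$ on distinct pairs of $\Q_m$: any $(y_1,y_2)$ with $y_1 \ne y_2$ lifts to the distinct pair $((0,y_1),(0,y_2))$, and the $U$-orbit descends to the $\pi(U)$-orbit. Theorem \ref{MSWconj} supplies a bilipschitz $g : \Q_m \to \Q_p$ (with $p,m$ powers of a common base) conjugating $\pi(U)$ into $Sim(\Q_p)$. Then $G := \mathrm{id}_{\R^n} \times g$ is a bilipschitz equivalence from $(\R^n \times \Q_m, D_{\bar M, m})$ to $(\R^n \times \Q_p, D_{\bar M, p})$, and conjugation by $G$ transports $U$ to a uniform subgroup $U' \subset Bilip_{\bar M}(\R^n \times \Q_p)$ whose $\Q_p$-projection lies in $Sim(\Q_p)$.

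In the second stage, the goal is to build a global $H \in Bilip_{\bar M}(\R^n \times \Q_p)$ of the form $H(x,y) = (h^y(x), y)$ that conjugates $U'$ into $ASim_{\bar M}(\R^n \times \Q_p)$. Fix a basepoint $y_0 \in \Q_p$, let $U'_{y_0} = \{f \in U' : f_2(y_0) = y_0\}$ and let $R_{y_0}$ denote its restriction to the fiber over $y_0$, which is a uniform subgroup of $Bilip_{\bar M}(\R^n)$. Exploiting the fact that $\pi(U') \subset Sim(\Q_p)$ acts cocompactly on pairs of $\Q_p$, one can extract cocompactness of $R_{y_0}$ on distinct pairs of $\R^n$ from the cocompactness of $U'$ on pairs lying in a common fiber. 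Theorem \ref{mytukia2} then produces $h^{y_0} \in Bilip_{\bar M}(\R^n)$ conjugating $R_{y_0}$ into $ASim_{\bar M}(\R^n)$. For other $y \in \Q_p$, define $h^y$ equivariantly: choose $f \in U'$ with $f_2(y_0) = y$ and set $h^y := f_1(\cdot, y_0) \circ h^{y_0} \circ \sigma_f^{-1}$ for a canonical $\sigma_f \in ASim_{\bar M}(\R^n)$ absorbing the indeterminacy. Extending continuously to all $y$ uses that, after the conjugation in Stage 1, the $Sim(\Q_p)$-orbit of $y_0$ meets every compact set of $\Q_p$.

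The principal obstacle is verifying that the assembled $H$ actually lies in $Bilip_{\bar M}(\R^n \times \Q_p)$; that is, that $h^y(x)$ depends on $y$ with the Hölder control encoded in $D_{\bar M, p}$. Lemma \ref{unifiterate} applied to $U'$ (viewed as inducing uniform similarities on the $\Q_p$-factor of the boundary after Stage 1) pins down the relation between the $\Q_p$-dilation $f_2$ and the $\R^n$-quasi-similarity constant of $f_1(\cdot, y_0)$: they are linked by a fixed exponential relation depending only on $\bar M$. Combining this with the uniformity of the bilipschitz constants across $U'$ converts the ultrametric distance $D(y,y')$ into the required estimate for $d(h^y(x), h^{y'}(x))$. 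The ambiguity of $h^y$ up to $ASim_{\bar M}(\R^n)$ is precisely the degree of freedom absorbed by the almost-translation part of the target, so the composition $H \circ G$ yields the conjugating map of the theorem. The gluing argument---rather than either fiberwise application of Theorem \ref{mytukia2} or the $\Q_m$-normalization via Theorem \ref{MSWconj}---is the technical heart of the proof.
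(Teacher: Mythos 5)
This statement is not proved in the paper at all: it is imported verbatim as Theorem 6 of \cite{D4}, so there is no in-paper argument to compare against. Judged on its own merits, your proposal is a reasonable sketch of the right general strategy (normalize the tree/$\Q_m$ factor first via Theorem \ref{MSWconj}, then deal with the $\R^n$ directions), and indeed the paper's Proposition \ref{noconjMQn} confirms that the actual proof begins with exactly your Stage 1 as its ``base case.'' But your Stage 2 is structurally different from the known proof and contains a genuine gap. The proof indicated by Propositions \ref{noconjM} and \ref{noconjMQn} proceeds by induction on the number of distinct eigenvalues of $\bar M$, conjugating one block of the flag of foliations at a time; you instead apply Theorem \ref{mytukia2} once, fiberwise over $\Q_p$, and then try to glue the fiber conjugacies $h^y$ into a single map $H(x,y)=(h^y(x),y)$. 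The assertion that $H\in Bilip_{\bar M}(\R^n\times\Q_p)$ --- i.e.\ that $y\mapsto h^y$ has the H\"older/bilipschitz regularity demanded by $D_{\bar M,p}$ --- is precisely what you label ``the technical heart,'' and you do not supply it. Theorem \ref{mytukia2} gives $h^{y_0}$ only up to the normalizer of $ASim_{\bar M}(\R^n)$ in $Bilip_{\bar M}(\R^n)$, so the $h^y$ obtained by pushing around via elements $f\in U'$ are each defined only up to such an ambiguity; the ``canonical $\sigma_f$ absorbing the indeterminacy'' is not constructed, and without a coherent, measurable-in-$y$ choice the glued $H$ need not even be well defined, let alone bilipschitz. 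This is not a routine verification: it is the content of the theorem, and the inductive block-by-block argument exists precisely to avoid having to solve this gluing problem in one step.

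Two smaller points also need repair. First, the claim that cocompactness of $U$ on distinct pairs of $\R^n\times\Q_m$ projects to cocompactness of $\pi(U)$ on distinct pairs of $\Q_m$ is not immediate: applying $u\in U$ to the pair $\bigl((0,y_1),(0,y_2)\bigr)$ controls the distance of the image pair in the product metric, which is (roughly) a maximum of the two factor distances, so the $\Q_m$-component of the image distance is bounded above but not obviously bounded below; you must use the specific coupling between the stretch of $f_2$ on $\Q_m$ and the quasi-similarity constant of $f_1$ to rule out degeneration into the $\R^n$ factor. Second, the same issue arises for the fiber group $R_{y_0}$: you need to actually extract cocompactness on distinct pairs of $\R^n$ (and separability of $R_{y_0}$, which Theorem \ref{mytukia2} requires) from the hypotheses before invoking that theorem. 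None of these is obviously false, but as written the argument does not close.
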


\subsection{Uniform subgroups containing many similarities}

In this section we show how for certain $X$ when a uniform subgroup $U \subset Bilip(X)$ contains ``sufficiently many'' elements of $Sim(X)$ then the conjugating map from the previous theorems (Theorems \ref{FMconj} to \ref{anothertukia}) can be chosen to be the identity map. In other words, 
we don't need to do a conjugation.
We show that this is true for Theorems \ref{FMconj} and \ref{mytukia2} while for Theorem \ref{anothertukia} we show that the conjugating map $f:\R^n \times \Q_m \to \R^n \times \Q_p$ can be chosen to be the identity on the $\R^n$ factor. We also explain why we cannot say anything more about Theorem \ref{MSWconj}.

\subsubsection{On subgroups of $Bilip(\R^n)$}

\begin{defn}
We say that $U \subset Bilip(\R^n)$ contains a \emph{dense group of translations} if $U$ contains translation $T_v(x)=x+v$ for a dense set of $v \in \R^n$. 
\end{defn}

\begin{lemma}\label{noconjR}
Suppose $U \subset Bilip(\R)$  contains a dense group of translations
 then $U \subset Sim(\R)$.
\end{lemma}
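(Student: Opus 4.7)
The plan is to apply Theorem \ref{FMconj} to produce a bilipschitz map $f:\R\to\R$ with $fUf^{-1}\subset Sim(\R)$, and then show that $f$ itself must already be a similarity. Since $Sim(\R)$ is closed under conjugation by similarities, this will give $U=f^{-1}(fUf^{-1})f\subset Sim(\R)$ directly. As in the other conjugation theorems of Section \ref{conjsec}, I read the hypothesis as implicitly including that $U$ is a uniform subgroup, since otherwise Theorem \ref{FMconj} does not apply.

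Let $V\subset\R$ denote the dense subgroup of parameters with $T_v\in U$, and for each $v\in V$ write $fT_vf^{-1}(x)=\lambda_v x+c_v$. The map $v\mapsto\lambda_v$ is a homomorphism $V\to\R^\times$; since bilipschitz self-homeomorphisms of $\R$ are monotone and each $T_v$ preserves orientation, $\lambda_v>0$. The crucial step will be to show $\lambda_v\equiv 1$. Uniformity of $U$ together with the bilipschitz constants of $f$ and $f^{-1}$ gives a common bilipschitz bound $K'$ for the family $\{fT_vf^{-1}\}_{v\in V}$, so $\lambda_v\in[1/K',K']$ for every $v\in V$. Combining this bound with the multiplicativity $\lambda_v^n=\lambda_{nv}\in[1/K',K']$ for every $n\in\Z$ forces $\lambda_v=1$, because otherwise $|\lambda_v|^n$ would escape the interval $[1/K',K']$.

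Once $\lambda_v\equiv 1$ is established, the relation $fT_vf^{-1}(x)=x+c_v$ rewrites as $f(y+v)-f(y)=c_v$, independent of $y$. Setting $y=0$ yields $c_v=f(v)-f(0)$, whence $f(y+v)-f(y)=f(v)-f(0)$ for all $y\in\R$ and $v\in V$. Density of $V$ and continuity of $f$ then extend this relation to all $v\in\R$, so $h(x):=f(x)-f(0)$ is a continuous additive function on $\R$ and therefore linear; hence $f$ is a similarity, finishing the proof. The main obstacle is pinning $\lambda_v=1$ via the boundedness-plus-multiplicativity observation; everything afterward is a standard functional-equation argument.
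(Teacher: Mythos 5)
Your proof is correct, but it takes a different route from the paper. The paper's own proof is a two-line appeal to Hinkkanen's structural lemmas on uniformly quasisymmetric groups of the line: since $U$ contains a dense set of translations, the closure $\bar{U}$ contains all of $Trans(\R)$, and Lemmas 7 and 9 of \cite{Hink} then force $U \subset Sim(\R)$ with no conjugation at all. You instead invoke the already-quoted conjugation theorem (Theorem \ref{FMconj}) as a black box and then show that the conjugating map $f$ is itself forced to be affine: the conjugated translations are uniformly bilipschitz similarities, so multiplicativity of $v\mapsto\lambda_v$ pins $\lambda_v\equiv 1$, and the resulting cocycle identity $f(y+v)-f(y)=c_v$ reduces to the Cauchy functional equation via density and continuity. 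Each step checks out (note that for the uniform bound on $\{fT_vf^{-1}\}$ you only really need that translations are isometries together with the bilipschitz constants of $f$; uniformity of $U$ is needed solely to apply Theorem \ref{FMconj}), and your reading that uniformity of $U$ is an implicit hypothesis is right --- without it the statement is false, and the paper's own proof also uses it tacitly since Hinkkanen's lemmas concern uniform groups. What your approach buys is a largely self-contained, elementary argument modulo the conjugation theorem the paper already states; what the paper's approach buys is brevity and the fact that it isolates exactly which pieces of Hinkkanen's analysis are being used, which is closer in spirit to how the higher-dimensional analogues (Propositions \ref{RNnoconjlem} and \ref{noconjM}) are handled, where one must re-enter the proof of the conjugation theorem rather than post-compose with it.
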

\begin{proof} 
This follows directly from \cite{Hink}.
We will show that $\bar{U} \subset Sim(\R)$. Let $Trans(\R)$ be the group of all translations of $\R$. If $U$ contains a dense set of translations then $Trans(\R) \subseteq \bar{U}$. 
By \cite{Hink} Lemma 7 this implies that $\bar{U}=Trans(\R)$ and by \cite{Hink} Lemma 9 we have that $U \subset Sim(\R)$. 
\end{proof}\\
For $n>1$, on top of requiring $U$ to contain a dense group of translations we also require that each $x\in \R^n$ is a radial point for $U$ where the defining sequence can be chosen to be a sequence of similarities (see Section \ref{presec}). To satisfy this condition it is sufficient for $U$ to  contain one sequence of similarities $\{\sigma_i\}$ with similarity constants $\lambda_i \to \infty$ such that the $\sigma_i$ all fix a common $x_0 \in \R^n$ . Then by composing $\sigma_i$ with the appropriate translations we have that every point is a radial point with a sequence of similarities as its defining sequence. 
  
\begin{prop}\label{RNnoconjlem} If $U \subset Bilip(\R^n)$ and each point in $\R^n$ is a radial point for $U$ where the defining sequence can be chosen to be a sequence of similarities $\{\sigma_i\} \subset U$
then $U \subset Sim(\R^n)$.  
\end{prop}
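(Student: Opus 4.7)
The plan is to apply Theorem \ref{Dconj} to produce a bilipschitz conjugator $\phi : \R^n \to \R^n$ with $\phi U \phi^{-1} \subseteq Sim(\R^n)$, and then to use the radial similarities together with Rademacher differentiability of $\phi$ to force $\phi$ itself to be a similarity, whence $U = \phi^{-1}(\phi U \phi^{-1})\phi \subseteq Sim(\R^n)$. The hypotheses of Theorem \ref{Dconj} hold because $U$ is uniform and the radial similarities of unbounded similarity constant at every point give cocompactness of the action of $U$ on distinct pairs. The case $n=1$ is covered by Lemma \ref{noconjR}, since the radial similarities produce a dense set of translations in $U$, so assume $n \geq 2$.

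Fix $x_0 \in \R^n$ at which $\phi$ is differentiable, together with a sequence $\sigma_i \in U$ of similarities fixing $x_0$ with constants $\lambda_i \to \infty$, written $\sigma_i(y) = x_0 + \lambda_i R_i(y - x_0)$ for $R_i \in O(n)$. The conjugate $\phi \sigma_i \phi^{-1} \in Sim(\R^n)$ fixes $\phi(x_0)$ with some similarity constant $\mu_i$, and the quotient $\mu_i/\lambda_i$ is bounded above and below by the bilipschitz constants of $\phi$. Write $\phi \sigma_i \phi^{-1}(y) = \phi(x_0) + \mu_i S_i(y - \phi(x_0))$ with $S_i \in O(n)$. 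The identity $\phi \sigma_i = (\phi \sigma_i \phi^{-1}) \phi$ reads
\[ \phi\bigl(x_0 + \lambda_i R_i(y - x_0)\bigr) = \phi(x_0) + \mu_i S_i\bigl(\phi(y) - \phi(x_0)\bigr). \]
Substituting $y = x_0 + \lambda_i^{-1} R_i^{-1} z$ for a fixed $z$, the left-hand side is the constant $\phi(x_0 + z)$, while the right-hand side, by differentiability of $\phi$ at $x_0$, equals $\phi(x_0) + (\mu_i/\lambda_i) S_i D\phi(x_0) R_i^{-1} z + o(1)$. Passing to a subsequence along which $\mu_i/\lambda_i \to c > 0$, $R_i \to R$, $S_i \to S$ yields $\phi(x_0 + z) - \phi(x_0) = c\, S\, D\phi(x_0)\, R^{-1}\, z$ for all $z \in \R^n$. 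So $\phi$ is globally affine: $\phi(y) = My + v$ with $M = D\phi(x_0)$. Plugging this affine form back in gives $M R M^{-1} = cS$; taking determinants forces $c^n = \pm 1$, hence $c = 1$ and $M R M^{-1} \in O(n)$.

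Because every point of $\R^n$ is a radial point, and (as noted in the comment preceding the proposition) radial similarities at arbitrary base points are produced by composing the given ones with translations in $U$, the orthogonal parts $R$ arising this way generate $O(n)$. Thus $R^T(M^TM)R = M^TM$ for every $R \in O(n)$, and Schur's lemma forces $M^TM = s^2 I$ for some $s > 0$; i.e., $M$ is a similarity matrix and $\phi \in Sim(\R^n)$, which completes the proof. The main technical obstacle is this last step: the hypothesis only guarantees some sequence of radial similarities at each point, so it takes a careful bootstrap using the group operation in $U$ and the density of radial points in $\R^n$ to extract enough rotational variety to generate $O(n)$; without this, the functional equation only restricts $M$ on a proper subgroup of $O(n)$ and does not force $M$ to be a similarity matrix.
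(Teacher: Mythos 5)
Your opening steps are sound and take a genuinely different route to the key intermediate fact than the paper does. The paper obtains an affine conjugator by going \emph{inside} the proof of Theorem \ref{Dconj}: it chooses the approximating sequence there to be $\delta_i a\sigma_i$ with $g_i=\sigma_i$ the radial similarities and $\delta_i=\lambda_i^{-1}$, so that the limit $f(x)=\lim aA_i(x-v_i)=\bar a x$ is linear by construction. You instead take an arbitrary conjugator $\phi$ from the black-box statement and force it to be affine by a Rademacher/blow-up argument along the radial similarities. That is a legitimate and arguably cleaner alternative, since it does not require reopening \cite{D1} (one technicality: the radial hypothesis only provides similarities whose fixed points converge to $x_0$, i.e.\ $\sigma_i(y)=\lambda_iR_i(y-v_i)$ with $v_i\to x_0$, not similarities fixing $x_0$ exactly, so you must carry an error term through the substitution; it survives).

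The last step, however, is a genuine gap and not merely a deferred detail. The hypothesis supplies, at each point, \emph{some} sequence of radial similarities, and the standard way this is arranged (the way it is arranged in every application in the paper, cf.\ the paragraph preceding the proposition) is to take a single sequence $\sigma_i$ based at one point and compose with translations. Composing with translations does not change the orthogonal parts $R_i$, so the collection of rotations $R$ you can feed into the relation $R^T(M^TM)R=M^TM$ may be as small as $\{I\}$ (e.g.\ when the $\sigma_i$ are homotheties), in which case the relation is vacuous and Schur's lemma gives nothing. Moreover the conclusion you are driving at --- that $\phi$ itself is a similarity --- is strictly stronger than what is needed and does not follow from the hypotheses; there is no reason the conjugator produced by Theorem \ref{Dconj} should be conformal. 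What actually has to be shown is the per-element statement that $M^{-1}A_\gamma M\in O(n)$ for each orthogonal part $A_\gamma$ occurring in $\phi U\phi^{-1}$, and the paper argues this by iterating a single $\gamma$ and invoking the uniformity of $U$, constraining $\gamma$ rather than $M$. As written, your argument does not close, and the repair you gesture at (extracting enough rotations to generate $O(n)$) is not available from the stated hypotheses.
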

\begin{proof} The key here is that in this situation 
in the proof of Proposition 9  in \cite{D1} (see also \cite{T}) the conjugating map $f$ can be chosen to be affine. Specifically $f$ 
is constructed as a limit $$f=\lim_{i\to \infty} \delta_i a g_i$$
where $g_i \in U, a\in GL_n(\R)$ and $\delta_i \in \R$ is chosen so that $\delta_i g_i$ is $K$-bilipschitz. We will not describe how $a$ is chosen but  $g_i$ can be chosen to be the sequence $\sigma_i$ and $\delta_i$  depends on $\sigma_i$. Specifically we get  $$g_i(x)=\sigma_i(x)=\lambda_i A_i(x-v_i)$$ 
where $A_i \in O(n_i)$ and $ \lambda_i \in \R$ since $\sigma_i$ is a similarity and $v_i \to 0$ since $x$ is a radial point.  We set $\delta_i= \lambda_i^{-1}$.
 Then for some subsequence we get that 
 $$f(x)=\lim_{i\to \infty} aA_i(x-v_i)=\bar{a}x$$
  where
 $\bar{a} \in GL_n(\R)$. Now since $\bar{a} \gamma \bar{a}^{-1}$ is a similarity for each $\gamma \in U$ we must have that 
 $$\gamma(x) =\delta\bar{a}^{-1} A (\bar{a}x + B)$$
 where $\delta \in \R$, $A \in O(n)$ and $B \in \R^n$.
But if $\bar{a}^{-1} A \bar{a}\notin O(n)$ then iterating $\gamma$ contradicts the uniformity of $U$.
Therefore $\gamma$ was already a similarity and no conjugation was needed.
\end{proof}
\subsubsection{On subgroups of $Bilip_{\bar{M}}(\R^n)$}\label{subbilipMsec}

We can adapt Lemmas \ref{noconjR}, \ref{RNnoconjlem} above to arrive with similar results for groups of $Bilip_{\bar{M}}(\R^n)$ maps. The main difference here is that the original conjugation theorem, Theorem \ref{mytukia2} does not necessarily conjugate into $Sim_{\bar{M}}(\R^n)$ but only into $ASim_{\bar{M}}(\R^n)$. We will have to do additional work to show that in fact $U \subset Sim_{\bar{M}}(\R^n)$.  (See Section \ref{rigabcsec} below). Here we will only show that no conjugation is needed to get into $ASim_{\bar{M}}(\R^n)$. 
\begin{prop}\label{noconjM} 
Let $U$ be a uniform subgroup of $Bilip_{\bar{M}}(\R^n)$ where each point in $\R^n$ is a radial point where the defining sequences can always be chosen to be  sequences of similarities with respect to $D_{\bar{M}}$.
Then $U\subset ASim_{\bar{M}}(\R)$.
\end{prop}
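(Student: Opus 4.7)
The plan is to mirror Proposition \ref{RNnoconjlem}, now working inside $Bilip_{\bar{M}}(\R^n)$ with respect to the parabolic visual metric $D_{\bar{M}}$. Observe first that $ASim_{\bar{M}}(\R^n)$ is a subgroup of $Bilip_{\bar{M}}(\R^n)$: the almost translations of Definition \ref{defnalmosttrans1} form a group, and they are normalized by $Sim_{\bar{M}}(\R^n)$, since the flag structure of Example \ref{metricDM} is preserved by $D_{\bar{M}}$-similarities. It therefore suffices to produce a conjugating map $f$ lying in $ASim_{\bar{M}}(\R^n)$ itself, for then
$$U = f^{-1}\bigl(fUf^{-1}\bigr)f \subset ASim_{\bar{M}}(\R^n).$$

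The first step is to apply Theorem \ref{mytukia2} to obtain some $f\in Bilip_{\bar{M}}(\R^n)$ with $fUf^{-1}\subset ASim_{\bar{M}}(\R^n)$, and then to revisit the construction of $f$ from \cite{D1,DP}. Fixing a basepoint $x_0\in\R^n$, the map $f$ is obtained as a limit
$$f \;=\; \lim_{i\to\infty}\, \delta_i\, a\, g_i,$$
where $g_i\in U$ realizes $x_0$ as a radial point, $\delta_i$ is a $D_{\bar{M}}$-dilation chosen so that $\delta_i a g_i$ is uniformly $K$-bilipschitz near $x_0$, and $a$ is a fixed auxiliary element of $ASim_{\bar{M}}(\R^n)$ extracted from the asymptotic geometry. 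By hypothesis I may take $g_i = \sigma_i \in Sim_{\bar{M}}(\R^n)$ with $\sigma_i(x_0)\to x_0$. Then $\delta_i \sigma_i \in Sim_{\bar{M}}(\R^n)$ is itself a uniformly $K$-bilipschitz similarity, and a standard Arzel\`a--Ascoli argument (using that the flag structure of Example \ref{metricDM} forces the ``$D_{\bar{M}}$-orthogonal'' part to lie in a compact group) produces a convergent subsequence with limit $\bar{\sigma}\in Sim_{\bar{M}}(\R^n)$. Hence $f = a\bar{\sigma}\in ASim_{\bar{M}}(\R^n)$, and the display above concludes the proof.

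The main obstacle is the renormalization step: one must verify that the rescaling $\delta_i$ built into the proof of Theorem \ref{mytukia2} is genuinely a $D_{\bar{M}}$-dilation (that is, multiplication on each generalized eigenspace of $\bar{M}$ by the exponential factor dictated by the corresponding $\alpha_i$), so that the product $\delta_i\sigma_i$ lives inside $Sim_{\bar{M}}$ rather than merely in $Bilip_{\bar{M}}$. This compatibility is forced by the anisotropy of $D_{\bar{M}}$: only dilations matching the eigenvalue structure of $\bar{M}$ can rescale $D_{\bar{M}}$-bilipschitz behavior uniformly across all coordinates. Once this is checked, the compactness of the limiting orthogonal factor and the group property of $ASim_{\bar{M}}$ deliver the conclusion without any further uniformity argument of the kind required in Proposition \ref{RNnoconjlem}; here the target $ASim_{\bar{M}}$ is already broad enough to absorb the limit directly.
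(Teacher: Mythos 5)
Your reduction is attractive---since $ASim_{\bar{M}}(\R^n)$ is a group, it would indeed suffice to show that the conjugator $f$ of Theorem \ref{mytukia2} can itself be taken inside $ASim_{\bar{M}}(\R^n)$---but the step where you place $f$ in $ASim_{\bar{M}}(\R^n)$ is not justified, and it is exactly where the real work lies. In the model argument (Proposition \ref{RNnoconjlem}) the auxiliary element $a$ is only an element of $GL_n(\R)$, not a similarity, and the limit $f=\lim \delta_i a g_i$ is consequently a general affine map $x\mapsto \bar{a}x$; the conclusion ``no conjugation was needed'' is \emph{not} obtained by showing $f$ is a similarity, but by the subsequent iteration argument: $\bar{a}\gamma\bar{a}^{-1}$ is a similarity, so $\gamma(x)=\delta\bar{a}^{-1}A(\bar{a}x+B)$, and uniformity of $U$ under iteration forces $\bar{a}^{-1}A\bar{a}\in O(n)$, i.e.\ $\gamma$ was already a similarity. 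Your closing claim that the breadth of $ASim_{\bar{M}}$ lets you dispense with ``any further uniformity argument of the kind required in Proposition \ref{RNnoconjlem}'' is therefore backwards: a flag-preserving element of $GL_n(\R)$ with non-conformal diagonal blocks does \emph{not} lie in $ASim_{\bar{M}}(\R^n)$ (the linear parts available there are dilations times block-orthogonal maps times unipotent almost-translation parts), so without the iteration step your $f=a\bar{\sigma}$ has no reason to lie in $ASim_{\bar{M}}(\R^n)$ and the group-theoretic shortcut collapses.

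A second, related problem is that the single global limit $f=\lim\delta_i a g_i$ does not describe how the conjugator of Theorem \ref{mytukia2} is actually built: that map is constructed by induction on the number of distinct eigenvalues of $\bar{M}$, i.e.\ coordinate-by-coordinate along the flag of foliations of Example \ref{metricDM}, and the intended proof of the present proposition checks at each inductive stage that no conjugation is needed---using the radial/similarity hypothesis and the mechanism of Proposition \ref{RNnoconjlem} in the higher-dimensional stages, and a density-of-translations argument as in Lemma \ref{noconjR} in the one-dimensional stages. Neither of these per-stage verifications appears in your write-up (the density input in particular has no analogue in your argument), and the ``main obstacle'' you identify---that $\delta_i$ is a genuine $D_{\bar{M}}$-dilation---is the least of the difficulties. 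To repair the proof, run the induction of Theorem \ref{mytukia2} and at each stage reproduce the relevant argument of Lemma \ref{noconjR} or Proposition \ref{RNnoconjlem}, concluding each time that the stage-wise conjugator may be taken to be the identity.
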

\begin{proof}We will only give an outline of the proof. 
Note that having each point be a radial point is equivalent to having $U$ act cocompactly on pairs. Therefore the proof of this lemma can be derived from the proof of Theorem \ref{mytukia2}.
Since in the proof of Theorem \ref{mytukia2} the conjugating map 
is constructed by induction on the number of distinct eigenvalues of $\bar{M}$ where the base case is one of the cases covered in Lemmas \ref{noconjR}, \ref{RNnoconjlem} we only need to consider the induction step. 
But in the induction step either we are in the one dimensional case where we can use a density argument as in Lemma \ref{noconjR}
or we are in the higher dimensional case where the conjugating map is constructed as in Lemma \ref{RNnoconjlem} so again no conjugation was needed. 
\end{proof}

\subsubsection{On uniform subgroups of $Bilip(\Q_m)$ and $Bilip_{\bar{M}}(\R^n \times \Q_m)$}

For uniform subgroups of $Bilip(\Q_m)$ the situation is different from the ones above. 
The main difference is that certain uniform subgroups of $Bilip(\Q_m)$ may only be conjugate into $Sim(\Q_p)$ for $p \neq m$. For example, since $T_4$ is quasi-isometric to $T_2$ via a height-respecting quasi-isometry we can view  $Sim(\Q_4)$ as a uniform subgroup of quasi-similarities of $Bilip(\Q_2)$ but $Sim(\Q_4)$ cannot be conjugate into $Sim(\Q_2)$.
In fact, for each $i,j$, we can view $Sim(\Q_{r^i})$ as a subset of quasi-similarities of $Bilip(\Q_{r^j})$ but only as a subset of similarities if $i=j$. (See Section \ref{finmanysec} for more details).
Therefore we are forced to use Theorem \ref{MSWconj}. 

For $Bilip_{\bar{M}}(\R^m \times \Q_m)$ we have a similar problem for the $\Q_m$ coordinate so we must do the base case conjugation of Theorem \ref{anothertukia}. We also need to redefine what we mean when we say ``a dense set of translations''. 

\begin{defn}\label{densetransdefn} We say that $U\subset Bilip_{\bar{M}}(\R^n \times \Q_m)$ contains a \emph{dense set of translations} if 
for all $(x,y) \in \R^n \times \Q_m$ and $\epsilon > 0$ we have some 
$$\gamma \in U \cap Isom_{\bar{M}}(\R^n \times \Q_m)$$
 with $d(\gamma(x,y),(0,0))<\epsilon$ such that $\gamma(x,y)=(x+v_\gamma, \sigma_\gamma(y))$ where $v_\gamma \in \R^n$ and $\sigma_\gamma \in Isom(\Q_m)$. 
\end{defn}

\begin{prop}\label{noconjMQn}
If $U\subset Bilip_{\bar{M}}(\R^n \times \Q_m)$ contains a dense set of translations and contains sequences of similarities that make each $(x,y)\in \R^n \times \Q_m$ into radial points then there exists a bilipschitz map 
$$f: \R^n \times \Q_m \to \R^n \times \Q_p$$
such that  $f$ conjugates $U$ into $ASim_{\bar{M}}(\R^n \times \Q_p)$ and such that $f$ is the identity on $\R^n$.
\end{prop}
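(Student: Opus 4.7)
The plan is to apply Theorem \ref{anothertukia} first to get a conjugating bilipschitz map $f_0:\R^n\times\Q_m \to \R^n\times\Q_p$, and then to show that its $\R^n$-component can be replaced by the identity using the dense translations and the radial similarities, without spoiling the conjugation. By Example \ref{metricZh2}, any such $f_0$ decomposes as $f_0(x,y)=(F(x,y),g(y))$ with $g:\Q_m\to\Q_p$ bilipschitz and $F(\cdot, y)\in Bilip_{\bar M}(\R^n)$ for every $y$. The map $g$ is genuinely forced on us, since $m$ may differ from $p$ and by Theorem \ref{MSWconj} no bilipschitz identification of $\Q_m$ with $\Q_p$ can generally be avoided. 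The claim is then that $\tilde f(x,y):=(x,g(y))$ still conjugates $U$ into $ASim_{\bar M}(\R^n\times\Q_p)$.

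For any $\gamma\in U$ with $\gamma(x,y)=(\gamma_1(x,y),\gamma_2(y))$, a direct computation gives
$$\tilde f\gamma\tilde f^{-1}(x,y)\;=\;\bigl(\gamma_1(x,g^{-1}(y)),\ g\gamma_2 g^{-1}(y)\bigr).$$
The $\Q_p$-component agrees with the one produced by $f_0\gamma f_0^{-1}$, hence is already a similarity on $\Q_p$. So the task reduces to showing that $(x,y)\mapsto \gamma_1(x,g^{-1}(y))$ is an element of $ASim_{\bar{M}}(\R^n\times\Q_p)$, i.e.\ a $Sim_{\bar{M}}$-similarity in $x$ composed with an almost translation in the sense of Definition \ref{defnalmosttrans2}, whose correction terms $B_i$ are allowed to depend on $y$. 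I propose to derive this by running the inductive proof of Theorem \ref{mytukia2} parametrically in the $\Q_m$-direction: the density hypothesis of Definition \ref{densetransdefn} provides arbitrarily small translations by any prescribed $v\in\R^n$ near every level $y_0\in \Q_m$, and the similarity-defined radial sequences make every point of $\R^n\times\Q_m$ radial for $U$ via a sequence of $\bar M$-similarities. These are precisely the two hypotheses that Proposition \ref{noconjM} uses to conclude that the $\R^n$-conjugation can be taken to be the identity, via Lemma \ref{noconjR} at one-dimensional blocks and Proposition \ref{RNnoconjlem} at higher-dimensional blocks.

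The principal obstacle is compatibility across $y$: the arguments of Lemma \ref{noconjR} and Proposition \ref{RNnoconjlem} are essentially pointwise (at a fixed $y_0$-slice), so one must check that the resulting identification assembles into a globally bilipschitz map on $\R^n\times\Q_p$ and that the remaining slice-to-slice drift fits into the $y$-dependent terms $B_i(y)$ permitted by Definition \ref{defnalmosttrans2}. Uniformity of $U$ provides uniform quasi-isometry constants for the radial sequences and the translation approximants, which propagate to uniform bilipschitz (and transverse H\"older) bounds on the $B_i$; crucially, the $y$-dependence is legitimate precisely because almost translations in $ASim_{\bar M}(\R^n\times\Q_p)$ are by definition allowed to have $B_i$ depending on the $\Q_p$-coordinate. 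Assembling these pieces shows that $\tilde f$ conjugates $U$ into $ASim_{\bar{M}}(\R^n\times\Q_p)$ and is the identity on the $\R^n$-factor, as claimed.
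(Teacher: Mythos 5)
Your proposal is correct and takes essentially the same approach as the paper: conjugate only the $\Q_m$-factor by a bilipschitz map onto $\Q_p$, and then use the dense translations together with the radial sequences of similarities, as in Proposition \ref{noconjM}, to conclude that no conjugation is needed on the $\R^n$-factor. The only cosmetic difference is that you extract the tree-factor conjugator $g$ from the full conjugating map supplied by Theorem \ref{anothertukia}, whereas the paper produces it directly by applying Theorem \ref{MSWconj} to the restricted action of $U$ on $\Q_m$; both give the same map $(x,y)\mapsto(x,g(y))$.
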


\begin{proof} First we must do the base case conjugation of Theorem \ref{anothertukia}. 
Namely we consider the action of $U$ on $\Q_m$ as given by the restriction of the action of $U$ on $\R^n \times \Q_m$. By Theorem \ref{MSWconj} we can conjugate this restricted action by a map $\sigma:\Q_m \to \Q_p$
to get an action of $U$ by similarities on $\Q_p$.
Then the map  
$$ f(x,y)=(x, \sigma(y))$$ 
is a bilipschitz map from $\R^n \times \Q_m$ to $\R^n \times \Q_p$
that conjugates $U\subset Bilip_{\bar{M}}(\R^n \times \Q_m)$ to a subgroup $U' \subset Bilip_{\bar{M}}(\R^n \times \Q_p)$ where
the $\gamma \in U'$ has the form
$$\gamma(x,y)=(f_1(x_1, \ldots, x_r,y),\ldots,f_r(x_r,y), \sigma_\gamma(y) )$$ 
where $\sigma_\gamma$ is now a similarity of $\Q_p$ and $f_i$ are unchanged by the conjugation.

Note that if $U$ contains a dense subgroup of translations then $U'$ also contains a dense subgroup of translations. 
This is because conjugation by the map $f$ stretch distances at most a fixed amount in the $y$ coordinate and there is no conjugation  on the $\R^n$ factors. Maps of the form $\gamma(x,y)=(x+v_\gamma, \sigma_\gamma(y))$ are sent to maps of the same form where now $\sigma_\gamma(y)$ is a similarity of $\Q_p$. 
Following the same reasoning as in Proposition \ref{noconjM} in combination with Theorem \ref{anothertukia} we see that no further conjugation is needed. In particular $U'$ consists of maps of the form 
$$\gamma(x,y)=\delta_t A(x_1+B_1(x_2, \ldots, x_r,y), \ldots, x_r + B_r(y), \sigma_\gamma(y) )$$ 
where  $\delta_t$ is a dilation, $A \in O(n)$, $(x_1+B_1(x_2, \ldots, x_r,y), \ldots, x_r + B_r(y), y )$ is an almost translation (see Definition \ref{defnalmosttrans2}) and $\sigma_\gamma \in Isom(\Q_p)$.
\end{proof}
\section{Rigidity}\label{rigiditysection}\label{rigsec}
\subsection{Rigidity of lattices in SOL, $BS(1,n)$ and $F\wr Z$.}

In this section we describe all envelopes of lattices in Sol ( $\Gamma_M =\Z \ltimes_M \Z^2$ for $M \in SL_2(\Z)$), the solvable Baumslag-Solitar groups ($BS(1,n)=\left< a,b \mid aba^{-1}=b^n \right>$) and lamplighter groups ($F \wr \Z$ with $|F|=n$). 
These three classes of groups are related in that their model spaces are horocyclic products of combinations of $\mathbb{H}^2$ and $T_{n+1}$ for appropriate $n$. Recall that 
\begin{itemize}
\item the model space for  $\Gamma_M$ is $SOL=\mathbb{H}^2 \times_h \mathbb{H}^2$.
\item the model space for  $BS(1,n)$ is $X_n=\mathbb{H}^2 \times_h T_{n+1}$
\item the model space for  $F \wr \Z$ is $DL(n,n)=T_{n+1} \times_h T_{n+1}$
\end{itemize}

We now prove the three cases of Theorem \ref{solbsdlthm}.\\

\begin{proof}{\it (of Theorem \ref{solbsdlthm}  part 1.)} By Lemma \ref{Furman3.3} we have an embedding with compact kernel and cocompact image
$$ \Phi: H \to  U \subset QI(Sol) \simeq Bilip(\R) \times Bilip(\R).$$
Since the image of $\Gamma_M$ is dense in each factor, we see by Proposition \ref{noconjR} that we must have actually had
$$ U \subset Sim(\R) \times Sim(\R).$$
Finally using uniformity of $U$ and Lemma \ref{unifiterate} we can see that 
$ U \subset Isom(Sol).$
\end{proof}

\begin{proof}{\it (of Theorem \ref{solbsdlthm}  part 2.)}  By a similar argument to the previous theorem we have an embedding with compact kernel and cocompact image
$$ \Phi: H \to U \subset QI(X_n) \simeq Bilip(\R) \times Bilip(\Q_n).$$
Again since the projection of $BS(1,n)$ is  dense in $Bilip(\R)$ we do not need to conjugate in this factor. We do, however, have to use Theorem \ref{MSWconj} to do a conjugation of the $Bilip(\Q_n)$ factor. This gives us
$$ U \subset Sim(\R) \times Sim(\Q_m).$$
Again using Lemma \ref{unifiterate} we conclude that $U \subset Isom(X_m)$.
\end{proof}

\begin{proof}{\it (of Theorem \ref{solbsdlthm}  part 3.)}  In this case we have an embedding with compact kernel and cocompact image
$$ \Phi: H \to U \subset QI(DL(n,n)) \simeq Bilip(\Q_n) \times Bilip(\Q_n) $$
After conjugation we have 
$$ U \subset Sim(\Q_m) \times Sim(\Q_{m'}).$$
Since for $m\neq m'$ we have that $DL(m,m')$ is not quasi-isometric to any finitely generated group \cite{EFW} we must have $m=m'$. 
As before, using Lemma \ref{unifiterate} we can argue that $U \subseteq Isom(DL(m,m))$.
\end{proof}

\subsection{Rigidity of abelian-by-cyclic groups.}\label{rigabcsec}

The goal of this section is to prove Theorems \ref{abcthm} and \ref{weakthm}. First we construct 
construct in more detail the model spaces for the finitely presented abelian-by-cyclic groups 
$$\Gamma_{{M}}=\left< a, b_1, \ldots, b_n \mid ab_ia^{-1}=\phi_M(b_i), b_ib_j=b_jb_i\right>$$ 
where $M=(m_{ij})$ is an integral matrix with $\det{M}\geq1$ and $\phi_M(b_i)=b_1^{m_{i1}} \cdots b_n^{m_{in}}$.
This was first done in \cite{FM3} but our construction and point of view is slightly different. Also, we treat only the cases where $M$ has eigenvalues that are strictly off of the unit circle. Since $\det{M}>1$ we have that $M$ lies on a one parameter subgroup of $GL(n, \R)$. In particular ${M}=e^\mu$ for some $n\times n$ matrix $\mu$ and the one parameter subgroup can be given by ${M}^t=e^{t\mu}$.
Each matrix $\mu$ can be put into real Jordan form
$$S^{-1} \mu S=\mu'= \delta + \nu + \eta$$
 where $\delta$ is diagonal
$\nu$ is superdiagonal and $\eta$ is skew symmetric. 
In this paper we will focus on the cases when $\nu$ is $0$ but the construction is similar for $\nu \neq 0$. 
In this case
$${M}^t=e^{t\mu}=S e^{t\delta} S^{-1} S e^{t\eta} S^{-1}=S\bar{M}^t P^t S^{-1}.$$
where $\bar{M}^t$ is diagonal and $P^t \in O(n)$. We consider the solvable Lie group $G_{\bar{M}} \simeq \R \ltimes_{\bar{M}} \R^n$ defined (as before) by the action of $\bar{M}^t$ on $\R^n$. We endow $G_{\bar{M}}$ with a left invariant metric such that the distance function at each height $t$ is given by 
$$ d_{t,\bar{M}} (x,y)= \| \bar{M}^{-t}(x-y) \|. $$   
If all of the eigenvalues of $\bar{M}$ are greater than one then $G_{\bar{M}}$ is negatively curved (as before). 
If not then we separate $\bar{M}$ into two matrices as before: $\bar{M}_1$ containing the eigenvalues greater than one and $\bar{M}_2$ containing the inverses of eigenvalues less than one (so that $\bar{M}_2$ also has all eigenvalues greater than one). Let $d=\det{{M}} \geq 1$ and let 
$$X_{\bar{M}} \simeq G_{\bar{M}_1} \times_h Z_{\bar{M}_2,d}$$
be the horocyclic product of $G_{\bar{M}_1}$ the negatively curved homogeneous spaces from Example \ref{nchseg}  and $Z_{\bar{M}_2,d}$ the millefeuille space from Example \ref{mfeg}. We have two degenerate cases, if $d=1$ then $Z_{\bar{M}_2,d}=G_{\bar{M}_2}$ or if $\bar{M}=\bar{M_1}$ then $Z_{\bar{M}_2,d}=T_{d+1}$.
As mentioned in Example \ref{abcexample} this horocyclic product is a model space
for the finitely presented abelian-by-cyclic group $\Gamma_{{M}}$. 

The space $X_{\bar{M}}$ is the same one that was defined in \cite{FM3} albeit in a different manner. In \cite{FM3}, $X_{\bar{M}}$ is defined as a fibered product of $G_{\bar{M}}$ and $T_{d+1}$. While the construction in \cite{FM3} obscures the boundary structure it does make it easier to define the action of $\Gamma_M$ on $X_{\bar{M}}$ and so  
to describe this action we use the construction from \cite{FM3}. To do this we first define an action of $\Gamma_M$ on $G_{\bar{M}}$ and then we combine it with the standard action of $\Gamma_M$ on $T_{d+1}$ (it is just the action of $\Gamma_M$ on its Bass-Serre tree $T_{d+1}$). 

Since $M=S\bar{M} P S^{-1}$ where $P \in O(n)$ as above 
then the action of $\Gamma_M$ on $G_{\bar{M}}$ is as follows. 
$$a\cdot (v,t) = (\bar{M}Pv,t+1)$$
$$b_j \cdot (v,t) =  (v + S^{-1} e_j,t)$$
where $e_j$ is the $j$th standard basis vector. These maps are indeed isometries of $G_{\bar{M}}$.
We check that these isometries are compatible with the relations $a b_j a^{-1}=\phi_{M}(b_j)$:
\bea ab_ja^{-1} (v,t) & = & ab_j (P^{-1}\bar{M}^{-1}v, t -1) =  ab_j(S^{-1} M^{-1} Sv, t -1)\\ 
& = & a (S^{-1} M^{-1}S v + S^{-1}e_j, t -1)\\
& = &  (S^{-1} M S[S^{-1} M^{-1}Sv + S^{-1}e_j], t )\\
& = & ( v +  S^{-1} Me_j , t )\\
& = & \phi_M(b_j) \cdot(v,t).
\eea
{\bf Coordinates on $X_{\bar{M}}$.} We can put coordinates $(v,t,y)$ on $X_{\bar{M}}$ where $y\in \Q_d$ and $(t,v)$ are the coordinates on $G_{\bar{M}}=\R \ltimes_{\bar{M}} \R^n$. Note that the coordinate $t$ corresponds to  
negative the height coordinate in $T_{d+1}$.
The action on $X_{\bar{M}}$ is given by
$$a\cdot (v,t,y) = (\bar{M}Pv,t+1, \sigma_a(y)),$$
$$b_j \cdot (v,t,y) =  (v + S^{-1} e_j,t, \sigma_{b_j}(y)).$$
Note that   $\sigma_{a}$ must be a similarity with similarity constant $d$  and  $\sigma_{b_j}\in Isom(\Q_d)$.

We now return to treating $X_{\bar{M}}$ as the horocyclic product $G_{M_1} \times_h Z_{M_2,d}$. Note that the two boundaries of $G_{M_1} \times_h Z_{M_2,d}$ are given by 
 $$\partial_1 X_{\bar{M}}\simeq \R^{n_1},  \quad \partial_2 X_{\bar{M}} \simeq \R^{n_2}\times \Q_d$$
where the metric on $\R^{n_1}$ is given by $D_{\bar{M}_1}$ and the metric on $\R^{n_2}$ is given by  $D_{\bar{M}_2}$.
 (See Section \ref{bsec} for a definitions of  boundaries of a horocyclic product). 
Consider the action of the generators of $\Gamma_M$ on the two boundaries of $X_{\bar{M}}$.  For $g \in \Gamma_{M}$ we write $\gamma_{g,1}$, 
$\gamma_{g,2}$ to denote the maps induced by $g$ on these two boundaries. 

\begin{lemma}\label{denselem}The group $\Gamma_M$ induces a dense set of translations of  $\partial_1 X_{\bar{M}}\simeq \R^{n_1}$ and $\partial_2 X_{\bar{M}} \simeq \R^{n_2}\times \Q_d$ .
\end{lemma}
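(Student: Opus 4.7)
The plan is to exhibit explicit height-preserving elements of $\Gamma_M$, namely the conjugates $a^{\mp k}b_ja^{\pm k}$, whose induced translations are dense on each of the two parabolic boundaries. Because these elements have zero net $a$-exponent, they preserve the $t$-coordinate on $X_{\bar{M}}$ and so induce pure translations on $\partial_1$ and $\partial_2$; the scaling of $\bar{M}$ and $\sigma_a$ under conjugation by $a^{\pm k}$ then lets me shrink the translation vectors by choosing $k$ in the appropriate direction.

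For $\partial_1\simeq\R^{n_1}$ I would use $a^{-k}b_ja^k$ for $k\geq 0$. A direct computation from $a\cdot(v,t,y)=(\bar{M}Pv,t+1,\sigma_a(y))$ and $b_j\cdot(v,t,y)=(v+S^{-1}e_j,t,\sigma_{b_j}(y))$ shows this element translates the $v$-coordinate by $\bar{M}^{-k}P^{-k}S^{-1}e_j$ and preserves $t$, so the induced translation on $\R^{n_1}$ is $\bar{M}_1^{-k}P_1^{-k}\pi_1(S^{-1}e_j)$. The vectors $w_j:=\pi_1(S^{-1}e_j)$ span $\R^{n_1}$ since $\pi_1\circ S^{-1}$ is surjective, and $\|\bar{M}_1^{-k}\|\to 0$ because every eigenvalue of $\bar{M}_1$ is strictly greater than one. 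Hence the subgroup $\bar{M}_1^{-k}P_1^{-k}(\Z w_1+\cdots+\Z w_n)$ is cocompact in $\R^{n_1}$ with generators of norm tending to zero, and its union over $k$ is dense.

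For $\partial_2\simeq\R^{n_2}\times\Q_d$ I would use $a^kb_ja^{-k}$ for $k\in\Z$. The analogous computation, together with the fact that $\sigma_a$ is a similarity of $\Q_d$ with constant $d$, gives induced translations $(v_{j,k},w_{j,k})=(\bar{M}_2^{-k}P_2^k\pi_2(S^{-1}e_j),\,d^{-k}y_j)$, where $y_j\in\Z_d$ is the translation of $b_j$ on the Bass-Serre boundary. The subgroup $N\subseteq\Gamma_M$ generated by these conjugates is the normal closure of $\Z^n$, it is abelian and may be identified with $\Z[1/d]^n$, and the induced translation map is $\Phi(z)=(\pi_2(S^{-1}z),\,\sum_jz_jy_j)$. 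Viewing $\Phi$ as the restriction to the cocompact lattice $\Z[1/d]^n\subset\R^n\times\Q_d^n$ (diagonally embedded) of the continuous surjection $(\pi_2\circ S^{-1})\times\iota\colon\R^n\times\Q_d^n\to\R^{n_2}\times\Q_d$, one concludes that $\Phi(N)$ is dense: the kernel of the surjection contains no finite-index sublattice of $\Z[1/d]^n$, so the image is not discrete and hence is dense (using connectedness of the $\R^{n_2}$-factor).

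The hard part will be precisely the coupling on $\partial_2$: a single conjugate $a^kb_ja^{-k}$ scales its $\R^{n_2}$- and $\Q_d$-components in opposite directions as $k$ varies---small in $\R^{n_2}$ and large in $\Q_d$ for $k>0$, and vice versa for $k<0$---so no single family of elements can be made simultaneously small in both factors, and density on the product therefore cannot be read off from density on each factor separately. The adelic-style step of realizing $N$ as a cocompact lattice $\Z[1/d]^n\subset\R^n\times\Q_d^n$ and projecting is what couples the two factors and yields the joint density that the $\partial_1$-style argument cannot give directly.
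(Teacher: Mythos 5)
Your treatment of $\partial_1$ is essentially the paper's argument (the paper projects the dense set $\{S^{-1}M^te_j\}$ to $E_+$; your version via the shrinking spanning sets $\bar{M}_1^{-k}P_1^{-k}\pi_1(S^{-1}e_j)$ is if anything slightly more careful) and is fine. The divergence, and the problems, are on $\partial_2$. Note first that you are proving more than is asked: Definition \ref{densetransdefn} only requires that every $(x,y)$ can be moved $\epsilon$-close to $(0,0)$ by some $\gamma$ of the form $(x,y)\mapsto(x+v_\gamma,\sigma_\gamma(y))$, and the paper gets exactly this from a short geometric argument: $\Gamma_M$ acts cocompactly on $X_{\bar{M}}$ with a fundamental domain of height one, so a point at height $t_0$ on the vertical geodesic defining $(x,y)$ can be carried into the fundamental domain by a height-preserving element of $\Gamma_M$ (necessarily of the required form, since height-preserving elements lie in the normal closure of the $b_j$), and on the boundary this moves $(x,y)$ to within $e^{-t_0}$ of the basepoint. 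Your stronger claim --- that the translation subgroup $\Phi(N)$ is actually dense in $\R^{n_2}\times\Q_d$ --- is true, but your proof of it has genuine gaps.

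Concretely: (a) the concluding inference ``the image is not discrete and hence is dense (using connectedness of the $\R^{n_2}$-factor)'' is invalid; $\Z^{n_2}\times\Q_d$ is a closed, cocompact, non-discrete, proper subgroup of $\R^{n_2}\times\Q_d$, so non-discreteness plus connectedness of one factor rules out nothing. To finish you would need to use that $\overline{\Phi(N)}$ is invariant under conjugation by $a$ and exploit the resulting contraction/expansion, and that is precisely the missing step. (b) The algebraic identifications are off: $N=\varinjlim(\Z^n\xrightarrow{M}\Z^n)=\Z^n[M^{-1}]$ is not $\Z[1/d]^n$ in general, and the group of translations of the tree boundary is the $M$-adic completion $\varprojlim \Z^n/M^k\Z^n$, which is bi-Lipschitz to $\Q_d$ but need not be isomorphic to $\Z_d$ as a topological group (for $M=2I_2$ it is $\Z_2^2$, not $\Z_4$); the $\Z[1/d]$-module structure your adelic step leans on is not available. (c) The ``hard part'' you isolate rests on a sign error: both factors of $\partial_2=\partial_\ell Z_{\bar{M}_2,d}$ carry visual metrics with respect to the \emph{same} height function, so by Proposition \ref{hrprop} conjugation by $a$ scales both by the same constant. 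The translation of $a^kb_ja^{-k}$ is $S^{-1}M^ke_j$, whose $E_-$-component is small and whose tree component lies in $M^k$ of the integral lattice, hence is $d$-adically small as well (compare $a^kba^{-k}=b^{m^k}$ in $BS(1,m)$, which translates $\Q_m$ by $m^k$, of norm $m^{-k}$). The real coupling difficulty is the opposite of the one you name: these elements fill only a small ball around $0$ in the $\Q_d$-direction, so to reach $(x,y)$ with $y$ far from $0$ you must choose $w_0\in M^{-j}\Z^n$ whose tree coordinate approximates $y$ and then correct the $E_-$-coordinate within the coset $w_0+M^k\Z^n$, using density of $\pi_{E_-}(S^{-1}M^k\Z^n)$ in $\R^{n_2}$. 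That repair is available, but it is not what you wrote.
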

\begin{proof}
For $\partial_1 X_{\bar{M}}$ we note that the set $$\{ S^{-1}M^te_j  \mid t \in \Z, j=1,\ldots, n \}$$ is dense in $\R^n$. Translations by these are the maps induced on height level sets by conjugates of the group elements $b_j$. Projecting this set to any subspace also gives a dense set. Since the lower boundary can be identified with $E_+$, the span of all eigenvectors with eigenvalues greater than one, we have that $\Gamma_M$ induces a dense set of translations on $\partial_1 X_{\bar{M}}$. If $\det{\bar{M}}=1$ the upper boundary can be identified with $E_-$, the span of all eigenvectors of $\bar{M}$ with eigenvalues less than one, so the same result holds for the upper boundary in this case.

When $\partial_2 X_{\bar{M}}\simeq \R^{n_2} \times \Q_d$ we use the fact that $\Gamma_M$ acts cocompactly on $X_{\bar{M}}$ with fundamental domain a hypercube of height one (i.e. the projection of the fundamental domain to the tree is a single edge containing only one vertex). In particular this means that if we fix the fundamental domain to be at height $t_0$ and pick any other point $s\in X_{\bar{M}}$ at height $t_0$ then there exits $g \in \Gamma_M$ that acts as an isometry on $\partial_2 X_{\bar{M}}$ that sends $s$ into the fundamental domain. On the level of the boundary, this means that if we pick the fundamental domain at a sufficiently large height $t_0$ and so that it contains a vertical geodesic defined by $(0,0) \in \partial_2 X_{\bar{M}}$ then if we let $s\in X_{\bar{M}}$ be a point at height $t_0$ on a geodesic defining $(x,y) \in  \partial_2 X_{\bar{M}}$ then there exist $g\in \Gamma_M$ as above such that
$$d((0,0),\gamma_{g,2}(x,y)) < e^{-t_0} < \epsilon.$$
Since $g\in \Gamma_M$ we have that  $\gamma_{g,2}$ must be of the form $$\gamma_{g,2}(x,y)=(x+v, \sigma(y))$$ as required by 
Definition \ref{densetransdefn}.
\end{proof}

\begin{proof}\emph{(of Theorem \ref{abcthm} and Theorem \ref{weakthm})}
If $\Gamma_M \subset H$ is a cocompact lattice then by Lemma \ref{Furman3.3} we have a cocompact embedding of $\Gamma_M$ up to compact kernel 
$$\Gamma_M \subset H \mapsto U \subset QI(X_{\bar{M}}) \simeq Bilip_{\bar{M}}(\R^{n_1}) \times Bilip_{\bar{M}}(\R^{n_2} \times \Q_m).$$
Combining Theorems \ref{mytukia2} and \ref{anothertukia} we have that after conjugation $U$ acts
on the horocyclic product $$X_{\bar{M}^k}= G_{\bar{M}_1^k} \times_h Z_{\bar{M}_2^k,d^k}$$
 where $k\in \Q$ and $\det(\bar{M}^k)=d^k\in \Z$,
 by 
maps of the form 
$$\gamma(x_1,x_2,y)=[ \delta_{t_1} A_1 (x_{1,1}+B_{1,1}(x_{1,2},\ldots,x_{1,n}), \ldots, x_{1,r} +B_{1,r}),\quad\quad\quad\quad\quad\quad\quad\quad\quad\quad\quad\quad\quad$$
$$\quad\quad\quad\quad\quad\quad\quad\delta_{t_2} A_2(x_{2,1}+B_{2,1}(x_{2,2}, \ldots, x_{2,r},y), \ldots, x_{2,r} + B_{2,r}(y), \sigma_\gamma(y) )].$$ 
where  $\delta_{t_1}, \delta_{t_2}$ are dilations, $A_1 \in O(n_1), A_2 \in O(n_2)$, $(x_{1,1}+B_{1,1}(x_{1,2},\ldots,x_{1,n}), \ldots, x_{1,r} +B_{1,r})$ is an almost translation (as in Definition \ref{defnalmosttrans1}), $(x_{2,1}+B_{2,1}(x_{2,2}, \ldots, x_{2,r},y), \ldots, x_{2,r} + B_{2,r}(y), y )$ is an almost translation (as in Definition \ref{defnalmosttrans2}) and $\sigma_\gamma \in Isom(\Q_{d^k})$.

By iterating $\gamma$ and appealing to uniformity much like in Lemma \ref{unifiterate} we see that $t_1+ t_2=0$. 
This finishes the proof of Theorem \ref{weakthm}. 

We now continue the proof of Theorem \ref{abcthm}. We will prove both cases 1 and 2 concurrently and only give indications where the proofs differ.  The outline of the proof is as follows:
\begin{description}
\item[Step 1:]  We define a homomorphism $\psi:U \to O(n) \times S^1$. 
\item[Step 2:]  We claim that $\ker{\psi}$ consists of elements that act as similarities on both $\partial_1 X_{\bar{M}^k}$ and $\partial_2 X_{\bar{M}^k}$ and therefore by Lemma \ref{unifiterate} we have that $\ker{\psi} \subset Isom(X_{\bar{M}^k})$.
\item[Step 3:] We claim that the image of $\psi$ is compact.
\end{description}
In conclusion we have that up to a compact subgroup (the image of $\psi$) the group $U$ is a subgroup of the isometry group $Isom(X_{\bar{M}^k})$. \\

{\bf Step 1:} First note that we can define a homomorphism 
$$\psi_1':U \to O(n)$$
by $$\psi_1'(\gamma)=A:= [A_1, A_2]$$
where $[A_1, A_2]$ denotes the block matrix with $A_1$ and $A_2$ on the diagonal. 
(If we are in case 2 then $A=A_1$).
Since $\Gamma_M$, hence $U$, are amenable and the only amenable subgroups of $O(n)$ are abelian we have that $\psi_1'(U)$ is contained in a maximal torus. 
This along with the fact that $t_1+ t_2=0$ allows us to define another homomorphism 
$$\psi=\psi_1 \times \psi_2: U \to O(n) \times S^1$$
by $$\psi(\gamma)=( AP^{-t},  e^{2\pi i t})$$
where $t=t_1=-t_2$ and $P\in O(n)$ is given by $M=S\bar{M}PS^{-1}$.
The map $\psi_1$ is actually a homomorphism 
since $P$ is in the image of $\psi_1'$ (specifically it is $\psi_1'(\gamma_a)$ for the generator $a \in \Gamma_M$) and therefore commutes with all $A$ in $\psi_1'(A)$. 
\\

{\bf Step 2:} The kernel of $\psi$ contains $\Gamma_M$ 
and consists precisely of the elements that up to composition with elements of $\Gamma_M$ have boundary maps $\gamma_1, \gamma_2$ of the form
$$\gamma_1(x_1)= (x_{1,1}+B_{1,1}(x_{1,2},\ldots,x_{1,n}), \ldots, x_{1,r} +B_{1,r_1})$$
$$\gamma_2(x_2)=(x_{2,1}+B_{2,1}(x_{2,2}, \ldots, x_{2,r}), \ldots, x_{2,r} + B_{2,r_2}).$$ 
Our goal is to show that $\ker{\psi}$ consists only of maps in $$Sim_{\bar{M}^k_1}(\R^{n_1}) \times Sim_{\bar{M}^k_2}(\R^{n_2})$$
 (or $Sim_{\bar{M}^k}(\R^{n_1}) \times Sim(\Q_{d^k})$ for case 2). In other words, we need all of the $B_{1,i},B_{2,i}$ to be constant maps. We call maps $\gamma$ with all $B_i$ constant
 \emph{straight}. To unclutter the notation we sometimes omit the subscripts $u$ and $1$.
We will show that the projection of the kernel onto each of the two boundaries consists of straight maps and then conclude that then the kernel must also consist of straight maps. 

The projection of the kernel of $\psi$ onto the lower boundary is precisely the elements that, up to composition of powers of $\gamma_{a,1}$, have the form 
$$\gamma(v)= (x_1+B_1(x_2,\ldots,x_n), \ldots, x_r +B_r).$$
The proof that shows that these maps are straight is by induction. We will only give the first step.

\begin{claim}\label{straight1} If  $\gamma(v)= (x_1+B_1(x_2,\ldots,x_n), \ldots, x_r +B_r)$ is the boundary map induced by an element in $\ker{\psi}$ then $B_i(x_{i+1}, \ldots ,x_r)=B_i(0)$ for all $(x_{i+1}, \ldots, x_r)$. 
\end{claim}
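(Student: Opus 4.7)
My plan is induction on $i$, decreasing from $i=r$ down to $i=1$; the base case $i=r$ is immediate because $B_r$ has no arguments. I will describe the first nontrivial step $i=r-1$ in detail, and subsequent steps run by the same blueprint, with the already-established straightness of $B_j$ for $j>i$ stripping away higher-coordinate contributions during the limit.

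The key observation is that $a\in\Gamma_M$ itself lies in $\ker\psi$: from $\psi(\gamma)=(AP^{-t},e^{2\pi it})$ one reads off $\psi(\gamma_a)=(P\cdot P^{-1},e^{2\pi i})=(I,1)$. The action of $a$ on the lower boundary $\partial_1 X_{\bar M^k}\cong\R^{n_1}$ is the similarity $\bar M^k P$, where $P$ commutes with $\bar M^k$ and hence restricts to $P_j\in O(n_j)$ on the $j$-th eigenspace. For $\gamma\in\ker\psi$ of almost-translation form, the conjugates $\gamma_n:=a^{-n}\gamma a^n$ again lie in $\ker\psi$, and a direct computation gives
\[
B_{r-1}^{(-n)}(x_r)\;=\;e^{-n\alpha_{r-1}}P_{r-1}^{-n}\,B_{r-1}\bigl(e^{n\alpha_r}P_r^{n}x_r\bigr).
\]
The uniform bilipschitz control on $U$, combined with the metric $D_{\bar M^k}$ of Example~\ref{metricDM}, forces $B_{r-1}$ to be $\alpha_{r-1}/\alpha_r$-H\"older, from which an elementary scaling yields the equi-H\"older bound $|B_{r-1}^{(-n)}(x_r)|\leq K|x_r|^{\alpha_{r-1}/\alpha_r}+e^{-n\alpha_{r-1}}|B_{r-1}(0)|$, with vanishing translation part as $n\to+\infty$.

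By QI-tameness (Section~\ref{qitamesec}) and Whyte's coarse-convergence topology (Section~\ref{topsec}), after extracting a subsequence along which the rotational factors $P_{r-1}^{n_k},P_r^{n_k}$ converge to some $Q_{r-1}\in O(n_{r-1}),Q_r\in O(n_r)$, I obtain a coarse limit $\xi_\infty$ inside the \emph{closed} subgroup $\ker\psi$. Its $(r-1)$-almost-translation component $F\colon\R^{n_r}\to\R^{n_{r-1}}$ is $\alpha_{r-1}/\alpha_r$-H\"older, satisfies $F(0)=0$, and obeys the equivariance
\[
F(\bar M^k Q_r\,x_r)\;=\;e^{\alpha_{r-1}}Q_{r-1}\,F(x_r)
\]
inherited from the coarse fixed-point relation $a\xi_\infty a^{-1}=\xi_\infty$.

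The final and most delicate step is to deduce $F\equiv 0$: once this is established, applying the same tangent construction centered at an arbitrary $x_0\in\R^{n_r}$ (re-centering by a translation in $\ker\psi$, available via Lemma~\ref{denselem} and closedness of the kernel) yields $B_{r-1}(x_r)=B_{r-1}(0)$ for every $x_r$. The main obstacle is that H\"older regularity together with the equivariance alone still admit nontrivial self-similar solutions such as $F(x_r)=c\,\mathrm{sgn}(x_r)|x_r|^{\alpha_{r-1}/\alpha_r}$. To rule these out I would use the additional constraint that $\xi_\infty$ is simultaneously an almost translation on the second boundary $\partial_2 X_{\bar M^k}$ and that the combined pair of boundary maps lifts to a height-respecting bilipschitz self-map of the horocyclic product $X_{\bar M^k}$; this joint-boundary rigidity, together with the inductive hypothesis preventing contamination from $B_j$ for $j>r-1$ in the limit, should force $F$ to be affine of order strictly less than one, hence constant, hence identically zero by $F(0)=0$.
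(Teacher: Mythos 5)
Your renormalization scheme (conjugating by powers of $a$ and passing to a coarse limit $F$ with the self-similar equivariance $F(\bar M^k Q_r x_r)=e^{\alpha_{r-1}}Q_{r-1}F(x_r)$) is a genuinely different route from the paper's, and the setup is fine as far as it goes: $a\in\ker\psi$, the conjugates stay in the closed subgroup $\ker\psi$, and the equi-H\"older bound on the renormalized family is correct. But the proof has a genuine gap exactly where you flag it, and the mechanism you propose to close it cannot work. An almost translation with $B_{r-1}(x_r)=c\,|x_r|^{\alpha_{r-1}/\alpha_r}$ is already a legitimate element of $ASim_{\bar M}(\R^n)$ in the sense of Definition \ref{defnalmosttrans1} --- that is precisely why the $B_i$ are allowed to be nonconstant H\"older functions there --- and such a map, paired with a genuine similarity on the other boundary, does lift to a height-respecting bilipschitz self-map of the horocyclic product $X_{\bar M^k}$. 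So ``joint-boundary rigidity'' of a \emph{single} element gives no obstruction; nothing about one map and its regularity forces $F\equiv 0$. There is also a secondary gap: even granting $F\equiv 0$, your blow-up only controls the asymptotics of $B_{r-1}$ along the scales $e^{n\alpha_r}$ (it shows $B_{r-1}(y)=o(|y|^{\alpha_{r-1}/\alpha_r})$ along those scales), which does not yield $B_{r-1}(x_r)=B_{r-1}(0)$ pointwise.

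The missing idea is that the obstruction is group-theoretic, coming from \emph{uniformity under iteration} rather than from the structure of a single element. The paper's proof of Claim \ref{straight1} composes $\gamma$ with itself $n$ times, interleaving translations from the dense set supplied by Lemma \ref{denselem} so that the additive constant stays below $\epsilon$ at every stage. The composite remains in the uniform group, so its $(r-1)$-component is H\"older in $x_r$ with a constant $K$ independent of $n$; meanwhile the deviation $B_{r-1}(x_r)-B_{r-1}(0)$ accumulates essentially $n$ times (up to errors of size $nK\epsilon^{\alpha}$ controlled by the H\"older continuity of $B_{r-1}$ under the $\epsilon$-perturbations of its argument). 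Comparing the composite at $0$ and at $(0,\ldots,0,x_r)$ and dividing by $n$ gives $|B_{r-1}(x_r)-B_{r-1}(0)|\le 2K\epsilon^{\alpha}+\frac{K}{n}|x_r|^{\alpha}$, which forces $B_{r-1}$ constant. Your self-similar solutions are exactly the ones killed by this averaging: iterating them $n$ times produces a map whose H\"older constant would have to grow linearly in $n$, contradicting uniformity. To repair your argument you would need to import this iteration step in some form; the blow-up alone does not suffice.
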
 
\begin{proof}
Since $\Gamma_M$ induces a dense set of translations (by Lemma \ref{denselem}) then by composing $\gamma$ with appropriate elements of $\Gamma_{{M}}$ we can 
assume that $|B_r|< \epsilon$. We refer to $B_r$ as the additive constant of $\gamma$. 
By iterating $\gamma$ (and composing the iterates with appropriate elements $g_j \in \Gamma_{{M}}$ to keep the additive constant $\epsilon_j$ less that $\epsilon$) we get that
$$\gamma\gamma_{g_n}\cdots\gamma\gamma_{g_1}(v)=(\ldots, x_{r-1}+B_{r-1}(x_r)+ D_1 + B_{r-1}(x_r + \epsilon_1) + D_2 \cdots + B_{r-1}(x_r +\epsilon_{n-1}), x_r+\epsilon_n)$$
where the $D_i$ are constants introduced by the elements $g_j \in \Gamma_M$ and
$$|B_{r-1}(x_r)-B_{r-1}(x_r+\epsilon_i)|<K \epsilon_i^\alpha < K \epsilon^\alpha.$$
Now comparing $\gamma({0})$ with $\gamma(v)$ where $v=(0, \ldots, 0, x_r)$ for some fixed $x_r$ we have 
$$|B_{r-1}(x_r)+ D_1 + B_{r-1}(x_r + \epsilon_1) + D_2 \cdots + B_{r-1}(x_r +\epsilon_{n-1}) \quad \quad \quad \quad \quad \quad \quad \quad \quad \quad$$
$$ -B_{r-1}(0)- D_1 - B_{r-1}( \epsilon_1) -D_2 \cdots - B_{r-1}(\epsilon_{n-1})| \leq K |x_r|^\alpha$$
so that
$$|nB_{r-1}(x_r) - n B_{r-1}(0) | \leq  \sum_{i=1}^{n-1} |B_{r-1}(x_r)-B_{r-1}(x_r+\epsilon_i)|  +  \sum_{i=1}^{n-1}|B_{r-1}(0)-B_{r-1}(\epsilon_i))| 
+ K |x_r|^\alpha
$$ 
and finally for all $n$
$$|B_{r-1}(x_r) -  B_{r-1}(0) | \leq  K\epsilon^\alpha   +  K \epsilon^\alpha  
+ \frac{K}{n} |x_r|^\alpha.
$$ 
\end{proof}\\
When $d=1$ then the upper boundary is treated like the lower boundary. For case 2 the upper boundary maps are already in $Sim(\Q_{d^k})$ for some $k\in \Q$ so there is no further analysis needed.
Since $\Gamma_M \subset \ker{\psi}$ the action of $\ker{\psi}$ on $X_{\bar{M}^k}$ is cocompact and hence Lemma \ref{unifiterate} applies. 
\\

 {\bf Step 3:} We now return to analyze the image of $\psi$.
 \begin{claim} The image of $\psi=\psi_1\times\psi_2$ is compact.
\end{claim}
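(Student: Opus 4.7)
The plan is to deduce compactness of $\psi(U)$ from two ingredients: first, that $\Gamma_M \subseteq \ker\psi$, which is checked by direct inspection on the generators $a$ and $b_j$ (each $b_j$ acts on $X_{\bar{M}^k}$ with $t=0$ and $A=I$, while $a$ acts with orthogonal part $P$ and height translation $t=1$, so $\psi_1(a) = P\cdot P^{-1} = I$ and $\psi_2(a) = e^{2\pi i} = 1$); and second, that $\psi$ is continuous. Granted these, since $\Gamma_M$ is a cocompact lattice in $U$, there is a compact set $K \subset U$ with $U = K\Gamma_M$, and then $\psi(U) = \psi(K)\psi(\Gamma_M) = \psi(K)$, which is compact as the continuous image of a compact set.

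The substantive work is the continuity of $\psi$. Since $\psi$ is a homomorphism of topological groups, I need only check continuity at the identity. Suppose $[\gamma_n] \to [\mathrm{id}]$ in the coarse convergence topology of Section \ref{topsec}. By Definition \ref{open}, for any finite $F \subset X_{\bar{M}^k}$ and any $s > 0$, for $n$ large there exist $(K,C)$-representatives $h_n \in [\gamma_n]$ with $d(h_n(x),x) < s$ on $F$; a diagonal argument then yields a single sequence of representatives converging pointwise to the identity on all of $X_{\bar{M}^k}$. Via Proposition \ref{hrprop} and the boundary identifications of Section \ref{bsec}, the induced boundary maps $\gamma_{n,i}$ converge pointwise to the identity on bounded subsets of $\partial_i X_{\bar{M}^k}$ for $i=1,2$. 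Because each $\gamma_{n,i}$ has the explicit structural form $\delta_{t_{n,i}} A_{n,i}\circ T_{n,i}$ with $T_{n,i}$ an almost translation and uniform bilipschitz constants inherited from $U$, this pointwise convergence forces the dilation exponent $t_{n,i} \to 0$ and the orthogonal factor $A_{n,i} \to I$. Hence $\psi_1(\gamma_n) = A_n P^{-t_n} \to I$ and $\psi_2(\gamma_n) = e^{2\pi i t_n} \to 1$, establishing continuity at the identity.

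The main obstacle is the final extraction of $t_{n,i}$ and $A_{n,i}$ from pointwise convergence of $\gamma_{n,i}$: the almost translation component $T_{n,i}$ can absorb bounded additive error, and the boundary metrics $D_{\bar{M}_i}$ are not Euclidean, so one must argue carefully that pointwise convergence on a suitably chosen bounded set genuinely pins down the dilation and orthogonal parts modulo the almost translation freedom. The leverage comes from the uniform bilipschitz bound: a nontrivial dilation or rotation cannot be absorbed by an almost translation with a fixed distortion constant, so any convergence to the identity on a test set of sufficiently many points forces the scaling and orthogonal factors to converge. Once this is in place, continuity of $\psi$ follows and compactness of $\psi(U) = \psi(K) \subseteq O(n) \times S^1$ is immediate, completing the proof of the claim and hence of Theorem \ref{abcthm}.
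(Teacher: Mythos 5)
Your architecture is genuinely different from the paper's. The paper never isolates continuity of $\psi$; instead it shows directly that the image is closed in the compact group $O(n)\times S^1$: given a limit point $A$ of rotation constants, it normalizes a sequence $\gamma_j$ with $A_j\to A$ so that base points move a bounded amount and the additive constants satisfy $|B_{i,j}(0)|<L$, extracts a coarse limit $\gamma$, and proves by a quantitative estimate (testing against vectors $sv_i$ with $s$ large) that $\gamma$ has rotation constant exactly $A$. Your reduction --- $\Gamma_M\subseteq\ker\psi$ checked on generators, $U=\Phi(\Gamma_M)\Phi(\bar{E})$ with $\Phi(\bar{E})$ compact, hence $\psi(U)=\psi(\Phi(\bar{E}))$ --- is sound and buys a cleaner target, namely continuity of $\psi$ at the identity. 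But be aware that the analytic content you defer to your last paragraph (that locally uniform convergence of the boundary maps to the identity forces $t_{n,i}\to 0$ and $A_{n,i}\to I$ despite the almost-translation freedom) is exactly the estimate the paper's proof carries out; you have reorganized the argument, not avoided its hard step. That extraction does work, and here is the clean way to do it: for $x,y$ differing only in the $i$-th eigenspace coordinate the $B$-terms with index $\geq i$ cancel, so the $i$-th block of $\gamma_{n}(x)-\gamma_{n}(y)$ is exactly $\lambda_i^{t_{n}}A_{n,i}(x_i-y_i)$; pointwise convergence then gives $\lambda_i^{t_{n}}A_{n,i}\to I$ on each block, and orthogonality of $A_{n,i}$ forces $t_{n}\to 0$ and $A_{n,i}\to I$.

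One step is wrong as written and must be repaired. You claim that $[\gamma_n]\to[\mathrm{id}]$ yields, for every finite $F$ and \emph{every} $s>0$, representatives with $d(h_n(x),x)<s$ on $F$. The coarse topology of Section~\ref{topsec} does not provide this: by QI-tameness and the characterization of coarse convergence, all it gives is a \emph{fixed} constant $R$ (essentially $R_{K,C}$) with $\limsup_n d(h_n(x),x)<R$ for each $x$; you cannot shrink $s$ below the tameness scale, so there is in general no sequence of representatives converging pointwise to the identity. What a diagonal argument actually yields is that $h_n$ moves every point of a ball of radius $\rho_n\to\infty$ by at most $R$. This still suffices, but for a different reason than the one you give: if the vertical geodesics $\xi$ and $\gamma_n(\xi)$ stay within distance $R+C'$ of one another down to height $-\rho_n$, then in the parabolic visual metric $d_{a,\epsilon}$ their boundary points are at distance at most $C(R)a^{-\rho_n}$, so the exponential contraction toward $\partial_\ell$ converts ``within a fixed $R$ on large balls'' into genuine locally uniform convergence of the boundary maps to the identity. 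With that correction, and with the block-by-block extraction above written out, your route goes through and is a legitimate alternative to the paper's closedness argument.
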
 
\begin{proof} Recall that the image of $\psi_1$ lies in a maximal torus. If $img(\psi)$ is discrete then it is a finite subgroup of $O(n)$ and hence compact. If it is not discrete then for any $A$ in the closure of $img(\psi)$ we have a sequence of maps $\gamma_i \in U$ with rotation constants $A_j \to A$. By composing with maps induced by elements of $\Gamma_M$ we can chose $\gamma_j$ so that 
$$\gamma_j(v)=  (A_j (\delta_{t_j} [x_{1,1}+B_{1,1}^j(x_{1,2},\ldots,x_{1,n}), \ldots, x_{1,r} +B^j_{1,r}],\quad\quad\quad\quad\quad\quad\quad\quad\quad\quad\quad\quad\quad$$
$$\quad\quad\quad\quad\quad\quad\quad\quad\quad \delta_{-t_j}[x_{2,1}+B^j_{2,1}(x_{2,2}, \ldots, x_{2,r}), \ldots, x_{2,r} + B^j_{2,r}]))$$ 
or in case 2
$$\gamma_j(v)=  (A_j (\delta_{t_j} [x_{1,1}+B_{1,1}^j(x_{1,2},\ldots,x_{1,n}), \ldots, x_{1,r} +B^j_{1,r}],  \sigma_{\gamma_j}(y) )$$ 
with $0\leq t_j< 1$ and with $$|B^j_{1,i}(0)|, |B^j_{2,i}(0)|<L \textrm{ and } d(\sigma_{\gamma_j}(0),0)\leq L.$$
These $\gamma_j$ in turn come from a uniform family of quasi-isometries of $X_{\bar{M}^k}$, all of which fix a base point up to bounded amount. (The base point is defined uniquely by picking the origin in $G_{\bar{M}}$ and $y=0\in \Q_p$.)
Therefore some subsequence coarsely converges to a  quasi-isometry that induces the boundary maps $\gamma_1,\gamma_2$. 
We must show that $\gamma=(\gamma_1,\gamma_2)$ has rotation constant $A$. 
Note that if $\gamma$ has rotation constant $A' \neq A$ then
there exists $\beta$ and $v_0$ such that 
$$d(A'v_0,Av_0) \geq \beta \|v_0\|.$$
We can actually pick $v_0$ so that if $v_L$ is any vector with $\|v_L\| \leq L$ 
then for any $s\geq 1$
$$d(A'sv_0,Asv_0) \geq \beta \|sv_0+v_L\| \geq \beta (\|sv_0\|- L).$$
Now since $A_j\to A$ we have that for all $\epsilon$ there exists $J_\epsilon$ such that if $j>J_\epsilon$ then for any $v$
$$d(A_jv, Av)\leq \epsilon \|v\|.$$
Note also that 
$$A_j=(A_{1,j}^1, \ldots, A_{r_1,j}^1,A_{1,j}^2, \ldots, A_{r_2,j}^2)$$
 and so 
 $$A=(A_1^1, \ldots, A_{r_1}^1, A_1^2, \ldots, A_{r_2}^2),\quad 
 A'=({A'}_1^1, \ldots, {A'}_{r_1}^1, {A'}_1^2, \ldots, {A'}_{r_2}^2).$$
We apply the previous facts to each $A_i^1, A_i^2$ in the decomposition. (We drop the superscripts $1,2$ to ease the notation). 
Suppose $A_{i,j} \to A_{i}$ but $A_i \neq A_i'$. Then as before there exists $v_i$ and $\beta$ with $d(A_iv_i,A'_iv_i)\geq \beta\|v_i\|$.
Note that $\gamma_j$ $R$-coarsely converges to $\gamma$ so in particular for any $s\in \R^+$ there is a $J_s$ such that if $j> J_s$ then
$$ d(\delta_{t_j}(A_{i,j}(sv_i + B_{i,j}(0))), \delta_{t}(A'_i(sv_i+B_i(0))))< R.$$ 
Now since $A_{i,j} \to A_i$  then for $j> J_\epsilon$
\bea
d(\delta_{t_j}(A_{i,j}(sv_i + B_{i,j}(0))),\delta_{t_j}(A_{i}(sv_i + B_{i,j}(0))))&\leq& \delta_{t_j} \epsilon \|sv_i +B_{i,j}(0)\| \\
&\leq& N\epsilon (\|sv_i\| + L).
\eea
Combining these two we get 
$$d(\delta_{t_j}(A_{i}(sv_i + B_{i,j}(0))), \delta_{t}(A'_i(sv_i+B_i(0))))<
R+ N\epsilon (\|sv_i\| + L)$$
But $$d(A_i(sv_i+B_{i,j}(0)), A'_i(sv_i+B_i(0))) \geq \beta(\|sv_i\| -L)$$
since $|B_{i,j}(0)|, |B_i(0)|< L$.
So 
$$\frac{1}{N}\beta(\|sv_i\| -L) \leq R + N \epsilon(\|sv_i\|+L).$$
By choosing $s$ large enough and $\epsilon = 1/s$ we see that this is impossible.  
This shows that the image of $\psi_1$ is compact.

If $d>1$ then we are done since after the conjugation the action on the tree factor is by isometries which ensures that only integral powers of the stretch factors can appear. 

If $d=1$ we use a similar argument to the one above to show that if the image of $\psi_2$ is not discrete then it is all of $S^1$. For any $0\leq t_0 < 1$ pick $\gamma_j \in U$ with dilation $\delta_{t_j}$ where $t_j \to t_0$. Up to composition with maps induced by $\Gamma_M$ we can again assume $|B^j_i(0)|< \epsilon$ so that the quasi-isometries inducing $\gamma_j$ all fix the base point (this time is it just $0\in G_{\bar{M}}$) up to a bounded amount. Again using coarse convergence of quasi-isometries we have that some subsequence of these quasi-isometries coarsely converge to a quasi-isometry whose boundary map we label $\gamma$. By looking at  $x_r \neq 0$ large enough we can conclude that $\gamma$ has dilation constant $\delta_{t_0}$.  \end{proof}\\
This claim finishes the proof of the theorem.
\end{proof}

\noindent{\bf Remark.} 
The main reason this approach does not work for the mixed eigenvalues case is that $Isom(\Q_m)$ does not split as a semi-direct product of $Stab(0)$ and a `translation' subgroup. This prevents the arguments from Claim 1 in the proof from going through.   
\\

\section{Finitely many model spaces}\label{finmanysec}

In this section we show that $\Gamma_M$ can only act on finitely many of the model spaces 
(and similarly for $F \wr \Z$).
This also implies that up to compact groups there is only a finite list of locally compact groups in which $\Gamma_M$ (and $F \wr \Z$) can be a lattice. 

First notice that $\Gamma_{M^k}$ is a lattice in $Isom(X_{\bar{M}^i})$ for all $i \leq k$ since $\Gamma_{M^k}$ can be realized as an index $k-i +1$ subgroup of $\Gamma_{M}$. 
Similarly, the lamplighter group $\Gamma_d=F\wr \Z$ has a finite index subgroup isomorphic to $\Gamma_{d^k}=(\oplus_{i=1}^kF) \wr \Z$. (Note however that not all lamplighters $G \wr \Z$ with $|G|=d^k$ are index $k$ in $\Gamma_n$). We will assume without loss of generality that $M$ cannot be written as a power of another matrix and that $d$ is not a proper power. 

Now we are left to show that $\Gamma_{M^i}$ (or $\Gamma_{d^i}$ ) cannot act cocompactly and properly discontinuously on $X_{\bar{M}^k}$ (or $DL(d^k, d^k)$) for $k < i$.
To that end we note that if $\Gamma_{M^i}$ (or $\Gamma_{d^i}$) acts cocompactly on $X_{\bar{M}^k}$ (or $DL(d^k, d^k)$) then it acts cocompactly on $T_{d^k+1}$. Furthermore, this action fixes a point at infinity and preserves the orientation of the tree. (In the lamplighter case this is true after passing to an index two subgroup). This implies that the quotient graph of groups decomposition is an (oriented) cycle of edges where each edge group includes isomorphically into to the initial vertex group and as an index $k$ subgroup into the terminal vertex group. 
By collapsing edges we can assume without loss of generality that the cycle has only one edge and one vertex. (After collapsing we get an action of the group on $T_{d^{ks}+1}$ where $s$ is the number of edges collapses; i.e. on a higher valence tree.) 
 
\begin{prop}
If $G$ acts transitively on $T_{d+1}$ and on $T_{e+1}$ by orientation preserving isometries 
such that there exists a quasi-conjugacy sending one action to the other then $d=e$. 
\end{prop}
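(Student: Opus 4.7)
The approach is to pass to the parabolic visual boundary and extract $d=e$ from the induced similarity structure. First I would show the quasi-conjugacy $\phi$ is height-respecting. Both actions fix a canonical end $\xi_0$ of their respective trees (given by orientation preservation), and by uniqueness of the globally $G$-fixed end the boundary extension of $\phi$ sends $\xi_0^d$ to $\xi_0^e$. Combining this with the $G$-equivariance of $\phi$ up to bounded error, I obtain $h_e(\phi(x)) = h_d(x) + c + O(1)$ for some constant $c$, and comparing shifts under the $G$-action forces the two height-shift homomorphisms $\tau_d, \tau_e : G \to \Z$ to be proportional; since both are surjective to $\Z$ with matching orientations, $\tau_d = \tau_e =: \tau$ exactly. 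Set $K=\ker\tau$.

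By Proposition~\ref{hrprop}, $\phi$ then induces a $G$-equivariant quasi-symmetric homeomorphism $\Phi: \Q_d \to \Q_e$ of parabolic boundaries, on which $G$ acts by similarities of ratio $d^{\tau(g)}$ and $e^{\tau(g)}$ respectively; in particular $K$ acts by isometries with dense orbits, since transitivity of $G$ on vertices forces transitivity of $K$ on each horosphere. The key technical step is to upgrade $\Phi$ from quasi-symmetric to bilipschitz. To do this I would apply (a variant of) Theorem~\ref{MSWconj} to the uniform subgroup of $Bilip(\Q_d)\times Bilip(\Q_e)$ generated by the diagonal $G$-action, using the dense isometric $K$-equivariance to preclude the H\"older-type distortion otherwise permitted by quasi-symmetry.

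Once $\Phi$ is bilipschitz, pick $g_1\in G$ with $\tau(g_1)=1$. Then $g_1$ acts as a similarity of ratio $d$ on $\Q_d$ and ratio $e$ on $\Q_e$, and equivariance together with iteration give, for any distinct $x,y \in \Q_d$,
\[
\frac{|\Phi(g_1^n x) - \Phi(g_1^n y)|_{\Q_e}}{|g_1^n x - g_1^n y|_{\Q_d}} = \left(\frac{e}{d}\right)^{\!n} \cdot \frac{|\Phi(x) - \Phi(y)|_{\Q_e}}{|x-y|_{\Q_d}},
\]
which must remain bounded uniformly in $n\in\Z$ by the bilipschitz constant of $\Phi$. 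Hence $e/d=1$, i.e.\ $d=e$. The principal obstacle is the bilipschitz upgrade of $\Phi$: a self-similar quasi-symmetric map between ultrametric Cantor sets $\Q_d$ and $\Q_e$ can a priori have nontrivial H\"older exponent $\log e/\log d$ without being Lipschitz, and ruling this out requires the delicate interplay of the density of $K$-isometric orbits with the similarity rigidity of the full $G$-action, in the spirit of the Mosher-Sageev-Whyte conjugation theorem.
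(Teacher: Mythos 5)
Your route is genuinely different from the paper's: the paper never passes to the boundary, but instead compares indices of vertex stabilizers in the two Bass--Serre graph-of-groups decompositions, using that the quasi-conjugacy forces vertex stabilizers of one action to be commensurable with those of the other, and closes with the purely algebraic count $d=|sHs^{-1}:H'|\cdot|H':H|=e$. Your boundary approach is in the spirit of the rest of the paper, and its first half is sound: the $G$-fixed end is unique on each side, equivariance gives $\tau_e=\lambda\tau_d$ for the a priori affine height distortion $\lambda$, and surjectivity of both height homomorphisms plus orientation forces $\lambda=1$; and the final iteration argument ($(e/d)^n$ bounded implies $d=e$) is correct \emph{provided} $\Phi$ is bilipschitz from $(\Q_d,d_{d,0})$ to $(\Q_e,d_{e,0})$.

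The gap is that this bilipschitz upgrade is precisely where all of the content of $d=e$ lives, and the mechanism you propose for it does not work. For a quasi-isometry $T_{d+1}\to T_{e+1}$ inducing a height translation, the argument of Proposition \ref{hrprop} yields a quasi-similarity only when the \emph{same} base $a$ is used for both visual metrics --- in which case $g$ has the same similarity ratio $a^{\tau(g)}$ on both boundaries and your final computation gives nothing. With the natural bases $d$ and $e$, what you get is a snowflake map, $d(\Phi\xi,\Phi\eta)\asymp d(\xi,\eta)^{\alpha}$ with $\alpha=\log e/\log d$, and such a map is fully compatible with everything you invoke: it is $G$-equivariant, and dense isometric $K$-orbits on both sides do not preclude it (the identity from $(\Q_2,2^{t_0})$ to its snowflake $(\Q_2,4^{t_0})$ is equivariant for $Isom(\Q_2)$ acting with dense isometric orbits on both sides). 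Theorem \ref{MSWconj} cannot help either: $G$ already acts by similarities on each factor, so there is nothing to conjugate, and that theorem expressly permits the base to change. To close the gap you need an input that sees the difference between $d$ and $e$ quantitatively. One way: $(\Q_d,d_{d,0})$ and $(\Q_e,d_{e,0})$ are both Ahlfors $1$-regular, and a surjective $\alpha$-snowflake between them rescales Hausdorff dimension by $1/\alpha$, forcing $\alpha=1$ and hence $d=e$ directly (making the final iteration step unnecessary). Equivalently, count in the trees: the $d^{n}$ height-zero descendants of a vertex at height $n$ are carried, injectively up to bounded multiplicity, into the descendants of boundedly many vertices at height $n+O(1)$, giving $d^{n}\leq Ae^{n}$ for all $n$ and, by symmetry, $d=e$. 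Without an argument of this kind your proof is incomplete at its crux.
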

\begin{proof}
Recall that a quasi-conjugacy is a quasi-isometry $$\varphi: T_{d+1} \to T_{e+1}$$
such that $\varphi(g\cdot x)$ is bounded distance from $g\cdot \varphi(x)$. 
In particular this implies that vertex stabilizers of one action are contained in vertex stabilizers of the other action. 
 A transitive action on $T_{d+1}$ (resp. $T_{e+1}$) implies that the quotient graph is the graph with exactly one edge and one vertex and where one of the edge to vertex group inclusions is as an index $d$ (resp. index $e$) subgroup. 
If $t$ is the generator in $G$ corresponding to the loop in the graph of groups decomposition of its action on $T_{d+1}$  then the subgroup generated by $t$ must act transitively on height level sets of $T_{e+1}$ (this follows from quasi-conjugacy of the actions). If we let $s\in G$ be the generator of the loop in the graphs of groups decomposition of its action on $T_{e+1}$ then 
$s=tp$ where $p$ is in the kernel of the map to $\mathbb{Z}$ (i.e. the height map).
Now suppose $H=stab(v)$ for $v \in T_{d+1}$. Then $tHt^{-1}$ is an index $d$ subgroup of $H$. 
Furthermore $$sHs^{-1}=tpHp^{-1}t^{-1}=t\ stab(p\cdot v)t^{-1}.$$ Since all of the vertex stabilizers are isomorphic we have that $|H:sHs^{-1}|=d$ as well. 

Let $v'\neq v$ be such that $stab(v) \subset stab(v')$ 
and $|stab(v'): stab(v)|$ is smallest over all possible $v'$.  Label $stab(v')=H'$.
Then 
$$ s^{-1} H' s \subset H \subseteq H'  \subset sHs^{-1} \subseteq sH's^{-1}.$$
Note that $|H': s^{-1}H's|=e$. 
If we let  $|H: s^{-1} H' s|=g$  and $|H':H|=f$ 
then $e=gf$. But we also have that $d=gf$ since $|sHs^{-1}: H'|=g$
and 
$$d=|sHs^{-1}:H|=|sHs^{-1}: H'||H':H|=fg.$$
\end{proof}
\subsection*{Acknowledgements}

I would like to thank Benson Farb for suggesting this problem and Kevin Whyte and Yves de Cornulier for useful conversations. I would also like to thank the referee for helpful comments. 

\bibliographystyle{amsalpha}

\bibliography{TameLC}

\end{document}